\setlist[enumerate]{label=(\alph*)}
\renewcommand\phi\varphi
\newcommand\eps\varepsilon
\newcommand\tmpcclin{\TT_{\text{MPCC}}^{\text{lin}}}
\newcommand\tmpvclin{\TT_{\text{MPVC}}^{\text{lin}}}
\newcommand\tmpcc{\TT_{\text{MPCC}}}
\newcommand\tmpvc{\TT_{\text{MPVC}}}
\newcommand\astat[1]{\ensuremath{\text{A}_{#1}}}
\newcommand\pstat[1]{\ensuremath{\text{P}_{#1}}}
\newcommand\cstat[1]{\ensuremath{\text{P}_{#1}}}
\newcommand\tnlplin[1]{\TT_{\text{NLP}(#1)}^{\text{lin}}}
\newcommand\tnlp[1]{\TT_{\text{NLP}(#1)}}
\definecolor{todocolor}{rgb}{1.0,0.3,0.3}
\newcommand\nonpos{(-\infty,0]}
\newcommand\nonpospow[1]{(-\infty,0]^{#1}}
\newcommand\nonpossq{\nonpospow{2}}
\newcommand\nonneg{[0,\infty)}
\newcommand\nonnegpow[1]{[0,\infty)^{#1}}
\newcommand\nonnegsq{\nonnegpow{2}}
\newcommand\nset[1]{\set{1,\,\ldots,\,#1}}
\newcommand\pset{\nset{p}}
\tikzset{axis/.style={
 thin, black, -latex, shorten <=-\nudge cm, shorten >=-2*\nudge cm}}
\def\nudge{.2}
\def\feaslw{2.6pt}
\def\pastat{0}
\def\pbstat{0}
\def\aastat{0}
\def\abstat{0}
\def\castat{0}
\def\cdstat{0}
\def\weirdstat{0}
\newcommand\mystore[2]{%
% 	\expandafter\gdef\csname mystore@#1 \endcsname{#2}%
	\write\@auxout{\unexpanded{\global\@namedef{mystore@#1}{#2}}}%
}
\newcommand\myread[1]{%
	\ifcsname mystore@#1\endcsname
		\csname mystore@#1\endcsname%
	\else
		??%
	\fi
}
\newenvironment{minproblem}[2][]{%
	\begin{aligned}
		\min_{#1} \quad& #2
		\\
		\text{s.t.} \quad&
		\begin{aligned}[t]% the option t forces proper alignment
		}{%
		\end{aligned}
	\end{aligned}
}
\newcommand\labelparamtag[3]{%
	\tag{$#2(#3)$}%
	\mystore{lpt@#1}{#2}%
	\label{#1}%
}
\newcommand\paramref[2]{%
	\hyperref[#1]{\text{($\myread{lpt@#1}(#2)$)}}%
}
\newif\ifmunkres
\let\cite\parencite
\title[Between S- and M-stationarity]{
	New stationarity conditions between strong and M-stationarity for 
	mathematical programs with complementarity constraints
}
\author{Felix Harder}
\date{September 3, 2021}
\begin{document}
%%fakesection: Title, abstract und co
\maketitle
\begin{abstract}
	We introduce new first-order necessary conditions
	for mathematical programs with complementarity constraints (MPCCs),
	which lie between strong and M-stationarity
	and have a relatively simple description.
	We show that they hold for local minimizers under 
	the rather weak constraint qualification MPCC-GCQ.
	As a generalization, we also get a class of stationarity conditions that
	lie between strong and C-stationarity and show
	that they also hold for local minimizers under MPCC-GCQ.
	We also present similar results for 
	mathematical programs with vanishing constraints (MPVCs),
	and a very simple and elementary proof of M-stationarity
	for local minimizers of MPVCs.
\end{abstract}

\begin{keywords}
	Mathematical program with complementarity constraints, 
% 	Mathematical program with equilibrium constraints, 
	Mathematical program with vanishing constraints, 
	Necessary optimality conditions,
% 	Nonlinear programming,
	M-stationarity,
	Guignard constraint qualification
\end{keywords}

\begin{msc}
	\mscLink{90C33}, % MPCCs
	\mscLink{90C30} % nonlinear programming
\end{msc}

% \newpage

\section{Introduction}
\label{sec:intro}

% \subsection{MPCCs}

\begin{figure}[t]
	\centering
	\begin{subfigure}[b]{.4\textwidth}
		\centering
\begin{tikzpicture}[scale=1.1]
	\fill[blue, opacity=0.3]
	(1,0) -- (1,1) --(0,1) --(0,0) --cycle;
	\ifthenelse{\castat=1}{
		\fill[blue, opacity=0.3]
		(-1,0) -- (-1,-1) --(0,-1) --(0,0) --cycle;
	}{}
	\ifthenelse{\aastat=1}{
		\fill[blue, opacity=0.3]
		(1,0) -- (1,-1) --(0,-1) --(0,0) --cycle;
	}{}
	\ifthenelse{\abstat=1}{
		\fill[blue, opacity=0.3]
		(-1,0) -- (-1,1) --(0,1) --(0,0) --cycle;
	}{}
	\draw[axis] (-1,0) -- (1,0)
% 	node[right=2* \nudge cm] {$G(x)$\sometimes{, $\mu$}};
	node[right=2* \nudge cm] {$\bar\mu_i$};
	\draw[axis] (0,-1) -- (0,1)
% 	node[above=2*\nudge cm] {$H(x)$\sometimes{, $\nu$}};
	node[above=2*\nudge cm] {$\bar\nu_i$};
	\draw[line width=\feaslw,blue] (0,0) -- (1,0) ;
	\draw[line width=\feaslw,blue] (0,0) -- (0,1) ;
	\ifthenelse{\pastat=1 \OR \aastat=1 \OR \castat=1}{
		\draw[line width=\feaslw,blue] (0,-1) -- (0,0) ;
	}{}
	\ifthenelse{\cdstat=1}{
		\draw[line width=\feaslw,blue] (0,0) -- (-1,-1) ;
	}{}
	\ifthenelse{\weirdstat=1}{
		\draw[line width=0.7*\feaslw,smooth,blue,domain=0:1,variable=\x ] 
% 		plot ({-sin(300*\x)*sin(300*\x)}, {-\x});
		plot ({-sin(5*deg(\x))*sin(5*deg(\x))}, {-\x});
	}{}
	\ifthenelse{\pbstat=1 \OR \abstat=1 \OR \castat=1}{
		\draw[line width=\feaslw,blue] (-1,0) -- (0,0) ;
	}{}
\end{tikzpicture}
\def\pastat{0}
\def\pbstat{0}
\def\aastat{0}
\def\abstat{0}
\def\castat{0}
\def\cdstat{0}
		\caption{strong stationarity}
		\label{item:fig:sstat}
	\end{subfigure}
	\begin{subfigure}[b]{.4\textwidth}
		\def\pastat{1}
		\def\pbstat{1}
		\centering
\begin{tikzpicture}[scale=1.1]
	\fill[blue, opacity=0.3]
	(1,0) -- (1,1) --(0,1) --(0,0) --cycle;
	\ifthenelse{\castat=1}{
		\fill[blue, opacity=0.3]
		(-1,0) -- (-1,-1) --(0,-1) --(0,0) --cycle;
	}{}
	\ifthenelse{\aastat=1}{
		\fill[blue, opacity=0.3]
		(1,0) -- (1,-1) --(0,-1) --(0,0) --cycle;
	}{}
	\ifthenelse{\abstat=1}{
		\fill[blue, opacity=0.3]
		(-1,0) -- (-1,1) --(0,1) --(0,0) --cycle;
	}{}
	\draw[axis] (-1,0) -- (1,0)
% 	node[right=2* \nudge cm] {$G(x)$\sometimes{, $\mu$}};
	node[right=2* \nudge cm] {$\bar\mu_i$};
	\draw[axis] (0,-1) -- (0,1)
% 	node[above=2*\nudge cm] {$H(x)$\sometimes{, $\nu$}};
	node[above=2*\nudge cm] {$\bar\nu_i$};
	\draw[line width=\feaslw,blue] (0,0) -- (1,0) ;
	\draw[line width=\feaslw,blue] (0,0) -- (0,1) ;
	\ifthenelse{\pastat=1 \OR \aastat=1 \OR \castat=1}{
		\draw[line width=\feaslw,blue] (0,-1) -- (0,0) ;
	}{}
	\ifthenelse{\cdstat=1}{
		\draw[line width=\feaslw,blue] (0,0) -- (-1,-1) ;
	}{}
	\ifthenelse{\weirdstat=1}{
		\draw[line width=0.7*\feaslw,smooth,blue,domain=0:1,variable=\x ] 
% 		plot ({-sin(300*\x)*sin(300*\x)}, {-\x});
		plot ({-sin(5*deg(\x))*sin(5*deg(\x))}, {-\x});
	}{}
	\ifthenelse{\pbstat=1 \OR \abstat=1 \OR \castat=1}{
		\draw[line width=\feaslw,blue] (-1,0) -- (0,0) ;
	}{}
\end{tikzpicture}
\def\pastat{0}
\def\pbstat{0}
\def\aastat{0}
\def\abstat{0}
\def\castat{0}
\def\cdstat{0}
		\caption{M-stationarity}
		\label{item:fig:mstat}
	\end{subfigure}
	\begin{subfigure}[b]{.34\textwidth}
		\def\pastat{1}
		\centering
\begin{tikzpicture}[scale=1.1]
	\fill[blue, opacity=0.3]
	(1,0) -- (1,1) --(0,1) --(0,0) --cycle;
	\ifthenelse{\castat=1}{
		\fill[blue, opacity=0.3]
		(-1,0) -- (-1,-1) --(0,-1) --(0,0) --cycle;
	}{}
	\ifthenelse{\aastat=1}{
		\fill[blue, opacity=0.3]
		(1,0) -- (1,-1) --(0,-1) --(0,0) --cycle;
	}{}
	\ifthenelse{\abstat=1}{
		\fill[blue, opacity=0.3]
		(-1,0) -- (-1,1) --(0,1) --(0,0) --cycle;
	}{}
	\draw[axis] (-1,0) -- (1,0)
% 	node[right=2* \nudge cm] {$G(x)$\sometimes{, $\mu$}};
	node[right=2* \nudge cm] {$\bar\mu_i$};
	\draw[axis] (0,-1) -- (0,1)
% 	node[above=2*\nudge cm] {$H(x)$\sometimes{, $\nu$}};
	node[above=2*\nudge cm] {$\bar\nu_i$};
	\draw[line width=\feaslw,blue] (0,0) -- (1,0) ;
	\draw[line width=\feaslw,blue] (0,0) -- (0,1) ;
	\ifthenelse{\pastat=1 \OR \aastat=1 \OR \castat=1}{
		\draw[line width=\feaslw,blue] (0,-1) -- (0,0) ;
	}{}
	\ifthenelse{\cdstat=1}{
		\draw[line width=\feaslw,blue] (0,0) -- (-1,-1) ;
	}{}
	\ifthenelse{\weirdstat=1}{
		\draw[line width=0.7*\feaslw,smooth,blue,domain=0:1,variable=\x ] 
% 		plot ({-sin(300*\x)*sin(300*\x)}, {-\x});
		plot ({-sin(5*deg(\x))*sin(5*deg(\x))}, {-\x});
	}{}
	\ifthenelse{\pbstat=1 \OR \abstat=1 \OR \castat=1}{
		\draw[line width=\feaslw,blue] (-1,0) -- (0,0) ;
	}{}
\end{tikzpicture}
\def\pastat{0}
\def\pbstat{0}
\def\aastat{0}
\def\abstat{0}
\def\castat{0}
\def\cdstat{0}
		\caption{\pstat0-stationarity}
		\label{item:fig:new_stat_a}
	\end{subfigure}
	\begin{subfigure}[b]{.34\textwidth}
		\def\pbstat{1}
		\centering
\begin{tikzpicture}[scale=1.1]
	\fill[blue, opacity=0.3]
	(1,0) -- (1,1) --(0,1) --(0,0) --cycle;
	\ifthenelse{\castat=1}{
		\fill[blue, opacity=0.3]
		(-1,0) -- (-1,-1) --(0,-1) --(0,0) --cycle;
	}{}
	\ifthenelse{\aastat=1}{
		\fill[blue, opacity=0.3]
		(1,0) -- (1,-1) --(0,-1) --(0,0) --cycle;
	}{}
	\ifthenelse{\abstat=1}{
		\fill[blue, opacity=0.3]
		(-1,0) -- (-1,1) --(0,1) --(0,0) --cycle;
	}{}
	\draw[axis] (-1,0) -- (1,0)
% 	node[right=2* \nudge cm] {$G(x)$\sometimes{, $\mu$}};
	node[right=2* \nudge cm] {$\bar\mu_i$};
	\draw[axis] (0,-1) -- (0,1)
% 	node[above=2*\nudge cm] {$H(x)$\sometimes{, $\nu$}};
	node[above=2*\nudge cm] {$\bar\nu_i$};
	\draw[line width=\feaslw,blue] (0,0) -- (1,0) ;
	\draw[line width=\feaslw,blue] (0,0) -- (0,1) ;
	\ifthenelse{\pastat=1 \OR \aastat=1 \OR \castat=1}{
		\draw[line width=\feaslw,blue] (0,-1) -- (0,0) ;
	}{}
	\ifthenelse{\cdstat=1}{
		\draw[line width=\feaslw,blue] (0,0) -- (-1,-1) ;
	}{}
	\ifthenelse{\weirdstat=1}{
		\draw[line width=0.7*\feaslw,smooth,blue,domain=0:1,variable=\x ] 
% 		plot ({-sin(300*\x)*sin(300*\x)}, {-\x});
		plot ({-sin(5*deg(\x))*sin(5*deg(\x))}, {-\x});
	}{}
	\ifthenelse{\pbstat=1 \OR \abstat=1 \OR \castat=1}{
		\draw[line width=\feaslw,blue] (-1,0) -- (0,0) ;
	}{}
\end{tikzpicture}
\def\pastat{0}
\def\pbstat{0}
\def\aastat{0}
\def\abstat{0}
\def\castat{0}
\def\cdstat{0}
		\caption{\pstat1-stationarity}
		\label{item:fig:new_stat_b}
	\end{subfigure}
	\begin{subfigure}[b]{.3\textwidth}
		\def\cdstat{1}
		\centering
\begin{tikzpicture}[scale=1.1]
	\fill[blue, opacity=0.3]
	(1,0) -- (1,1) --(0,1) --(0,0) --cycle;
	\ifthenelse{\castat=1}{
		\fill[blue, opacity=0.3]
		(-1,0) -- (-1,-1) --(0,-1) --(0,0) --cycle;
	}{}
	\ifthenelse{\aastat=1}{
		\fill[blue, opacity=0.3]
		(1,0) -- (1,-1) --(0,-1) --(0,0) --cycle;
	}{}
	\ifthenelse{\abstat=1}{
		\fill[blue, opacity=0.3]
		(-1,0) -- (-1,1) --(0,1) --(0,0) --cycle;
	}{}
	\draw[axis] (-1,0) -- (1,0)
% 	node[right=2* \nudge cm] {$G(x)$\sometimes{, $\mu$}};
	node[right=2* \nudge cm] {$\bar\mu_i$};
	\draw[axis] (0,-1) -- (0,1)
% 	node[above=2*\nudge cm] {$H(x)$\sometimes{, $\nu$}};
	node[above=2*\nudge cm] {$\bar\nu_i$};
	\draw[line width=\feaslw,blue] (0,0) -- (1,0) ;
	\draw[line width=\feaslw,blue] (0,0) -- (0,1) ;
	\ifthenelse{\pastat=1 \OR \aastat=1 \OR \castat=1}{
		\draw[line width=\feaslw,blue] (0,-1) -- (0,0) ;
	}{}
	\ifthenelse{\cdstat=1}{
		\draw[line width=\feaslw,blue] (0,0) -- (-1,-1) ;
	}{}
	\ifthenelse{\weirdstat=1}{
		\draw[line width=0.7*\feaslw,smooth,blue,domain=0:1,variable=\x ] 
% 		plot ({-sin(300*\x)*sin(300*\x)}, {-\x});
		plot ({-sin(5*deg(\x))*sin(5*deg(\x))}, {-\x});
	}{}
	\ifthenelse{\pbstat=1 \OR \abstat=1 \OR \castat=1}{
		\draw[line width=\feaslw,blue] (-1,0) -- (0,0) ;
	}{}
\end{tikzpicture}
\def\pastat{0}
\def\pbstat{0}
\def\aastat{0}
\def\abstat{0}
\def\castat{0}
\def\cdstat{0}
		\caption{\cstat{1/2}-stationarity}
		\label{item:fig:cdstat}
	\end{subfigure}
	\caption{geometric illustration of M-/S-stationarity
		and the new stationarity conditions
		for MPCCs, with $i\in\pset$
		such that $G_i(\bar x)=H_i(\bar x)=0$,
		where $\bar\mu_i$, $\bar\nu_i$
		are the Lagrange multipliers that correspond to $G_i(\bar x)\geq0$,
		$H_i(\bar x)\geq0$
	}
% 		$i\in I^{00}(\bar x)
% 	=\set{i\in\pset\mid G_i(\bar x)=H_i(\bar x)=0}$}
% , $i\in I^{00}(\bar x)$}
	\label{fig:new_stat}
\end{figure}
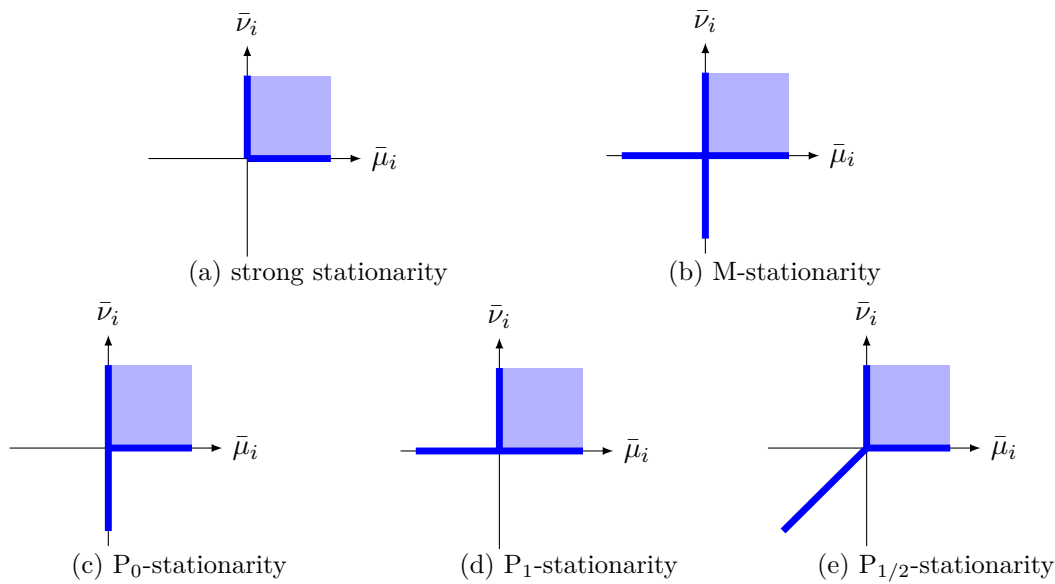

We consider mathematical programs with complementarity constraints,
or MPCCs for short,
which are nonlinear optimization problems of the form
\begin{equation*}
	\label{eq:mpcc}
	\tag{MPCC}
	\begin{minproblem}[x\in\R^n]{f(x)}
		g(x)&\leq 0,
		& h(x)&=0,
		\\
		G(x) &\geq0,
		& H(x) &\geq 0,
		& G(x)^\top H(x) &=0.
	\end{minproblem}
\end{equation*}
Here, $f:\R^n\to\R$, $g:\R^n\to\R^l$, $h:\R^n\to\R^m$,
$G,H:\R^n\to\R^p$ are differentiable functions.
For this class of problems, problem-tailored 
first-order necessary conditions
have been studied extensively in the literature.
One such stationarity condition is called strong stationarity
(see \cref{def:stat}~\ref{item:def:sstat},
illustrated in \cref{item:fig:sstat}),
which is satisfied for local minimizers under the relatively
strong constraint qualification MPCC-LICQ.
However, in \cite[Example~3]{ScheelScholtes2000} an example
is given where the data is linear but strong stationarity does not hold
for the local minimizer.

Another well-known stationarity condition is M-stationarity
(see \cref{def:stat}~\ref{item:def:mstat},
illustrated in \cref{item:fig:mstat}),
which is satisfied for local minimizers under the relatively
weak constraint qualification MPCC-GCQ
(see \cref{def:mpcc_gcq}~\ref{item:mpcc_gcq}).

The main contribution of this paper is a stationarity
condition, which we call 
\emph{P-stationarity with respect to $\alpha$} or
\emph{\pstat\alpha-stationarity},
where $\alpha\in\set{0,1}^p$ is an (a-priory given) parameter vector.
This stationarity condition is defined in
\cref{def:new_stat_mpcc}~\ref{item:def_p_stat}
and the special cases $\alpha=0$ and $\alpha=1$ are illustrated in
\cref{item:fig:new_stat_a,item:fig:new_stat_b}.
% This stationarity condition is illustrated in
% \cref{item:fig:new_stat_a,item:fig:new_stat_b}
% for the special cases $\alpha=0$ and $\alpha=1$.
To the best of our knowledge, this stationarity condition is new.
In \cref{thm:between_m_and_s} we are able to show
that \pstat\alpha-stationarity holds for local minimizers
of \eqref{eq:mpcc} under MPCC-GCQ.
The letter ``P'' stands for the Poincaré--Miranda theorem,
which is an important ingredient in the proof.
As can be seen in \cref{fig:new_stat}, this stationarity
condition lies strictly between strong and M-stationarity.
Due to the parameter $\alpha\in\set{0,1}^p$,
we actually get $2^p$ new stationarity conditions.

There are other stationarity conditions that are between
strong and M-stationarity.
One of them is linearized B-stationarity,
defined in \cref{def:stat}~\ref{item:def:linbstat}.
Others are
extended M-stationarity, strong M-stationarity,
$\QQ_M$-stationarity, and linearized M-stationarity,
see \cite{Gfrerer2014,BenkoGfrerer2017,Gfrerer2018}.
However, we are not aware of a stationarity condition
in the literature which has a nice geometrical illustration
as those in \cref{fig:new_stat} and 
which lies between strong and M-stationarity.

One important idea for the proof of \pstat\alpha-stationarity
was taken from \cite{Harder2020}:
Under MPCC-GCQ, local minimizers satisfy a system of so-called
\astat\alpha-stationarity, see \cref{prop:astat}.
Then, one can consider convex combinations of the corresponding
multipliers to obtain better stationarity conditions.
While in \cite{Harder2020} this led to a new elementary proof
of M-stationarity, in the present paper we utilize
the Poincaré--Miranda theorem to establish the stronger
\pstat\alpha-stationarity for all $\alpha\in\set{0,1}^p$.
The Poincaré--Miranda theorem
is mentioned in \cref{thm:poincare_miranda} and is a generalization
of the intermediate value theorem to higher dimensions.
It is equivalent to the Brouwer fixed-point theorem.

The idea to combine various \astat\alpha-stationarity systems
was also used for the concept of $\QQ$-stationarity,
see \cite{BenkoGfrerer2016:2,BenkoGfrerer2017}.
However, this concept does not lead directly to
\pstat\alpha-stationarity.

Our method based on the Poincaré--Miranda theorem
allows for some generalization.
We define \cstat{d}-stationarity also for the case
where $d\in [0,1]^p$ is a vector
in \cref{def:new_stat_mpcc}~\ref{item:def_cdstat}.
% Thus, we also introduce the so-called \cstat{d}-stationarity,
% where $d\in [0,1]^p$ is a vector.
This stationarity condition is illustrated in \cref{item:fig:cdstat}
for the constant vector $d=1/2$
and lies between strong stationarity and the 
so-called C-stationarity 
(see \cref{def:stat}~\ref{item:def:cstat}).
% for a definition of C-stationarity).
% It is a generalization of \pstat\alpha-stationarity
We are able to generalize \cref{thm:between_m_and_s}
and show that (for all $d\in[0,1]^p$) \cstat{d}-stationarity
holds for local minimizers under MPCC-GCQ in
\cref{thm:weird_stat_mpcc}~\ref{item:cd_stat_mpcc}.
In \cref{thm:weird_stat_mpcc}~\ref{item:weird_stat_mpcc}
we state that an even more general class of stationarity condition
holds under MPCC-GCQ.
This allows for some unusual stationarity conditions,
such as the one depicted in \cref{fig:weird_stat_mpcc}.

% \subsection{MPVCs}

We can also transfer the results from
MPCCs to 
mathematical programs with vanishing constraints,
or MPVCs for short.
These are nonlinear optimization problems of the form
\begin{equation*}
	\label{eq:mpvc}
	\tag{MPVC}
	\begin{minproblem}[x\in\R^n]{f(x)}
		g(x)&\leq 0,
		& h(x)&=0,
		\\
		H_i(x) &\geq0,
		& G_i(x) \, H_i(x) &\leq0
		\qquad\forall i\in \pset,
	\end{minproblem}
\end{equation*}
where $f$, $g$, $h$, $G$, $H$ are the same type of functions
as for MPCCs.
This problem has also been studied frequently
in the literature and was introduced
in \cite{AchtzigerKanzow2007}.
Due to some similarities in the stationarity systems,
it is relatively easy to apply the results from MPCCs also to MPVCs.
This is done in \cref{sec:between_mpvc},
where we show that the MPVC-version of
\cstat{d}-stationarity holds for all $d\in[0,1]^p$ for local minimizers
under the weak constraint qualification MPVC-GCQ
(see \cref{def:mpcc_gcq}~\ref{item:mpvc_gcq}).
% for the definition of MPVC-GCQ).
This also covers the case of \pstat\alpha-stationarity
for all $\alpha\in\set{0,1}^p$.
Again, these stationarity conditions for MPVCs are new to the best of our
knowledge.

In \cref{sec:simple_mstat}, we also provide a very short and elementary
proof of M-stationarity for local minimizers of MPVCs under MPVC-GCQ.
Although this is an already well-known stationarity condition,
we decided to include this proof as it is very short and 
the required tools are available anyways.
The proof is even simpler than the recent elementary proof
of M-stationarity for MPCCs in \cite{Harder2020} 
and relies on the observation
that \astat0-stationarity 
(defined in \cref{def:new_stat_mpvc}~\ref{item:def:astat_mpvc})
trivially implies M-stationarity in the case of MPVCs.
The proof does not rely on the Poincaré--Miranda theorem or other
complicated theory
and, as far as we know, is significantly shorter than existing proofs.
We even use a constraint qualification which is more general
than MPVC-GCQ in \eqref{eq:or_cq},
as demonstrated in \cref{ex:mpvcgcq_not_gcq_emptyset}.

As simple corollaries of our main results,
we also provide some results for the relations among the newly introduced
stationarity conditions,
see \cref{cor:relations,cor:equiv_mpcc,cor:equiv_mpvc}.

\section{Definitions}
\label{sec:definitions}

Let us make some definitions that relate
to stationarity conditions and constraint qualifications
for \eqref{eq:mpcc} and \eqref{eq:mpvc}.
% We use standard notation throughout this paper.
% It will be convenient to work with the index sets
For $x\in\R^n$ and $\alpha\in\set{0,1}^p$ we define the index sets
\begin{align*}
	I^l &:=\nset{l},\quad
	I^m := \nset{m},\quad
	I^p := \pset,
	\\
	I^g( x)&:=\set{i\in I^l\given g_i( x)=0},\\
	I^{+0}( x)&:=\set{i\in I^p\given G_i( x)>0\,\land\,H_i( x)=0},\\
	I^{0+}( x)&:=\set{i\in I^p\given G_i( x)=0\,\land\,H_i( x)>0},\\
	I^{-0}( x)&:=\set{i\in I^p\given G_i( x)<0\,\land\,H_i( x)=0},\\
	I^{-+}( x)&:=\set{i\in I^p\given G_i( x)<0\,\land\,H_i( x)>0},\\
	I^{00}( x)&:=\set{i\in I^p\given G_i( x)=0\,\land\,H_i( x)=0},\\
	I_{\alpha=0}^{00}( x)
	&:=\set{i\in I^p\given G_i( x)=0\,\land\,H_i( x)=0\,\land\,\alpha_i=0},\\
	I_{\alpha=1}^{00}( x)
	&:=\set{i\in I^p\given G_i( x)=0\,\land\,H_i( x)=0\,\land\,\alpha_i=1}.
\end{align*}
% where $x\in\R^n$ is a point and $\alpha\in\set{0,1}^p$ is a vector.
Note that if $x$ is a feasible point of \eqref{eq:mpcc},
then $I^{+0}(x)$, $I^{0+}(x)$, $I^{00}(x)$ 
form a partition of $I^p$.
Likewise, if $x$ is a feasible point of \eqref{eq:mpvc},
then $I^{+0}(x)$, $I^{-0}(x)$, $I^{-+}(x)$, $I^{0+}(x)$, $I^{00}(x)$ 
form a partition of $I^p$.
In any case, $I_{\alpha=0}^{00}(x)$ and $I_{\alpha=1}^{00}(x)$
form a partition of $I^{00}(x)$.
Let us mention that in some papers on MPVCs the notation is reversed, 
i.e.\ they use $I^{+-}(x)$ instead of $I^{-+}(x)$,
but for the sake of consistency we chose to use the same notation
for MPCCs and MPVCs.

\subsection{Constraint Qualifications}

In preparation for the definition of MPCC-GCQ and MPVC-GCQ
we introduce some cones.
\begin{definition}
	\label{def:general}
% 	Let $\bar x\in\R^n$ be a feasible point of \eqref{eq:mpcc}
% 	or \eqref{eq:mpvc}.
	\begin{enumerate}
		\item
			\label{item:tangent_cone}
			We define the \emph{tangent cone}
			at a point $\bar x\in F$ to a closed set $F\subset \R^n$ via
			\begin{equation*}
				\TT_F(\bar x):=\set*{ d\in\R^n
					\given
					\begin{aligned}
						&\exists \set{x_k}_{k\in\N}\subset F,\,
						\exists \set{t_k}_{k\in\N}\subset(0,\infty):
						\\ &\qquad
						x_k\to \bar x,\; t_k\downto 0,\;
						t_k^{-1}(x_k-\bar x)\to d
					\end{aligned}
				}.
			\end{equation*}
			If $F$ is the feasible set of \eqref{eq:mpcc},
			then we denote the tangent cone by $\tmpcc(\bar x)$.
			Likewise, if 
			$F$ is the feasible set of \eqref{eq:mpvc},
			then we denote the tangent cone by $\tmpvc(\bar x)$.
		\item
			\label{item:mpcclin}
			We define the \emph{MPCC-linearized tangent cone}
			$\tmpcclin(\bar x)\subset\R^n$ at $\bar x\in\R^n$ via
			\begin{equation*}
				\tmpcclin(\bar x):=
				\set*{ d\in\R^n
					\given
					\begin{aligned}
						\nabla g_i(\bar x)^\top d &\leq 0
						\qquad \forall i\in I^g(\bar x),
						\\
						\nabla h_i(\bar x)^\top d &= 0
						\qquad \forall i\in I^m,
						\\
						\nabla G_i(\bar x)^\top d &= 0
						\qquad \forall i\in I^{0+}(\bar x),
						\\
						\nabla H_i(\bar x)^\top d &= 0
						\qquad \forall i\in I^{+0}(\bar x),
						\\
						\nabla G_i(\bar x)^\top d &\geq 0
						\qquad \forall i\in I^{00}(\bar x),
						\\
						\nabla H_i(\bar x)^\top d &\geq 0
						\qquad \forall i\in I^{00}(\bar x),
						\\
						(\nabla G_i(\bar x)^\top d )(\nabla H_i(\bar x)^\top d)&= 0
						\qquad \forall i\in I^{00}(\bar x)
					\end{aligned}
				}.
			\end{equation*}
		\item
			We define the \emph{MPVC-linearized tangent cone}
			$\tmpvclin(\bar x)\subset\R^n$ at $\bar x\in\R^n$ via
			\begin{equation*}
				\tmpvclin(\bar x):=
				\set*{ d\in\R^n
					\given
					\begin{aligned}
						\nabla g_i(\bar x)^\top d &\leq 0
						\qquad \forall i\in I^g(\bar x),
						\\
						\nabla h_i(\bar x)^\top d &= 0
						\qquad \forall i\in I^m,
						\\
						\nabla G_i(\bar x)^\top d &\leq 0
						\qquad \forall i\in I^{0+}(\bar x),
						\\
						\nabla H_i(\bar x)^\top d &= 0
						\qquad \forall i\in I^{+0}(\bar x),
						\\
						\nabla H_i(\bar x)^\top d &\geq 0
						\qquad \forall i\in I^{00}(\bar x)\cup I^{-0}(\bar x),
						\\
						(\nabla G_i(\bar x)^\top d )
						(\nabla H_i(\bar x)^\top d) &\leq 0
						\qquad \forall i\in I^{00}(\bar x)
					\end{aligned}
				}.
			\end{equation*}
	\end{enumerate}
\end{definition}
Note that in many instances of \eqref{eq:mpcc} or \eqref{eq:mpvc},
% $\tmpcc(\bar x)$ and $\tmpcclin(\bar x)$ are
these cones are nonconvex sets.
Recall that the polar cone $C\polar$ of a set $C\subset\R^n$
is defined via
\begin{equation*}
	C\polar:=\set{d\in\R^n\given d^\top y\leq0 \quad\forall y\in C}.
\end{equation*}
Now we are ready to give the definition of
MPCC-GCQ and MPVC-GCQ.
% and its -ACQ variants.
%
\begin{definition}
	\label{def:mpcc_gcq}
	\begin{enumerate}
		\item
			\label{item:mpcc_gcq}
			Let $\bar x\in\R^n$ be a feasible point of \eqref{eq:mpcc}.
			We say that $\bar x$ satisfies the
			\emph{MPCC-tailored Guignard constraint qualification},
			or \emph{MPCC-GCQ}, if
			\begin{equation*}
				\tmpcc(\bar x)\polar = \tmpcclin(\bar x)\polar
			\end{equation*}
			holds.
			If $\tmpcc(\bar x)=\tmpcclin(\bar x)$ holds
			then we say that $\bar x$ satisfies \emph{MPCC-ACQ}.
		\item
			\label{item:mpvc_gcq}
			Let $\bar x\in\R^n$ be a feasible point of \eqref{eq:mpvc}.
			We say that $\bar x$ satisfies the
			\emph{MPVC-tailored Guignard constraint qualification},
			or \emph{MPVC-GCQ}, if
			\begin{equation*}
				\tmpvc(\bar x)\polar = \tmpvclin(\bar x)\polar
			\end{equation*}
			holds.
			If $\tmpvc(\bar x)=\tmpvclin(\bar x)$ holds
			then we say that $\bar x$ satisfies \emph{MPVC-ACQ}.
	\end{enumerate}
\end{definition}
The definition for MPCC-GCQ can also be found in
\cite[(41)]{FlegelKanzowOutrata2006},
where it is called MPEC-GCQ.
The definition of MPVC-GCQ and MPVC-ACQ can also be found in
\cite[Definition~2.8]{HoheiselKanzow2008}.

Clearly, MPCC-ACQ implies MPCC-GCQ.
% and MPVC-ACQ implies MPVC-GCQ.
% Similar to ordinary nonlinear optimization problems,
We mention that there are also other stronger constraint qualifications
(such as MPCC-MFCQ if $g$, $h$, $G$, $H$ are continuously differentiable)
which imply MPCC-ACQ or MPCC-GCQ and are easier to verify,
see e.g.\ \cite[Theorem~3.2]{Ye2005}.
In particular, we emphasize that MPCC-GCQ and MPCC-ACQ
are satisfied at every feasible point of \eqref{eq:mpcc}
if the functions $g$, $h$, $G$, $H$ are affine.
The same statements are true for MPVCs.

%
% \begin{remark}
% 	\label{rem:affine}
% 	Suppose that the functions $g$, $h$, $G$, $H$ are affine linear.
% 	Then every feasible point of \eqref{eq:mpcc} satisfies
% 	MPCC-GCQ and MPCC-ACQ.
% \end{remark}
% A proof of this statement can be found in
% \cite[Theorem~3.2]{FlegelKanzow2005:3}.

\subsection{Stationarity conditions}

We continue with the definition of tailored stationarity systems.
\begin{definition}
	\label{def:stat}
	Let $\bar x\in\R^n$ be a feasible point of \eqref{eq:mpcc}.
	\begin{enumerate}
		\item
			\label{item:def:wstat}
			We call $\bar x$ a \emph{weakly stationary} or 
			\emph{W-stationary} point of \eqref{eq:mpcc}
			if there exist multipliers
			$\bar\lambda\in\R^l$, $\bar\eta\in\R^m$,
			$\bar\mu,\bar\nu\in\R^p$ with
			\begin{subequations}
% 				\label{eq:wstat_mpcc}
				\begin{align*}
% 					\label{eq:wstat_x}
					&&&&\mathllap{
						\nabla f(\bar x) 
						+ \sum_{i\in I^l} \bar\lambda_i\nabla g_i(\bar x)
						+ \sum_{i\in I^m} \bar\eta_i\nabla h_i(\bar x)
						- \sum_{i\in I^p} \bar\mu_i\nabla G_i(\bar x)
						- \sum_{i\in I^p} \bar\nu_i\nabla H_i(\bar x)
					}
					&=0, \\
					&\hspace{18em}&&\forall i\in I^g(\bar x):
					&
					\bar\lambda_i &\geq0,
					\\
					&&&\forall i\in I^l\setminus I^g(\bar x):
					&
					\bar\lambda_i &=0,
					\\
					&&&\forall i\in I^{+0}(\bar x):
					&
					\bar\mu_i  &=0,
					\\
% 					\label{eq:wstat_0+}
					&&&\forall i\in I^{0+}(\bar x):
					&
					\bar\nu_i  &=0.
				\end{align*}
			\end{subequations}
		\item
			We call $\bar x$ an \emph{A-stationary} point if
			it is weakly stationary and the multipliers $\bar\mu,\bar\nu$
			satisfy the additional condition
			$\bar\mu_i\geq0\lor\bar\nu_i\geq0$
			for all $i\in I^{00}(\bar x)$.
		\item
			\label{item:def:cstat}
			We call $\bar x$ a \emph{C-stationary} point if
			it is weakly stationary and the multipliers $\bar\mu,\bar\nu$
			satisfy the additional condition
			$\bar\mu_i\bar\nu_i\geq0$
			for all $i\in I^{00}(\bar x)$.
		\item
			\label{item:def:mstat}
			We call $\bar x$ an \emph{M-stationary} point if
			it is weakly stationary and the multipliers $\bar\mu,\bar\nu$
			satisfy the additional condition
			$(\bar\mu_i\geq0\land \bar\nu_i\geq0)\lor \bar\mu\bar\nu=0$
			for all $i\in I^{00}(\bar x)$.
		\item
			\label{item:def:sstat}
			We call $\bar x$ a \emph{strongly stationary} or 
			\emph{S-stationary} point if
			it is weakly stationary and the multipliers $\bar\mu,\bar\nu$
			satisfy the additional condition
			$\bar\mu_i\geq0\land \bar\nu_i\geq0$
			for all $i\in I^{00}(\bar x)$.
		\item
			\label{item:def:linbstat}
			We call $\bar x$ a \emph{linearized B-stationary} point
			if 
			$-\nabla f(\bar x)\in \tmpcclin(\bar x)\polar$
			holds.
	\end{enumerate}
\end{definition}
Parts~\ref{item:def:wstat} to \ref{item:def:sstat}
of this definition can also be found in
\cite[Definitions~2.3--2.7]{Ye2005}.
The letters ``A'', ``C'' and ``M'' stand for
``alternative'', ``Clarke'' and ``Mordukhovich'', respectively.
The definition of linearized B-stationarity
appears in \cite[(25)]{FlegelKanzow2005:3}.
It is often only referred to as B-stationarity,
see, e.g., \cite[Section~2.1]{ScheelScholtes2000}.
However, since the condition $-\nabla f(\bar x)\in\tmpcc(\bar x)\polar$
(which appears in 
\cref{lem:basic_nlp_theory}~\ref{item:b_stat})
is sometimes also called B-stationarity, we prefer
the name ``linearized B-stationarity'' for
$-\nabla f(\bar x)\in\tmpcclin(\bar x)\polar$, to avoid confusion.

Let us define analogous stationarity conditions for MPVCs.
\begin{definition}
	\label{def:mpvc_stat}
	Let $\bar x\in\R^n$ be a feasible point of \eqref{eq:mpvc}.
	\begin{enumerate}
		\item
			We call $\bar x$ a \emph{weakly stationary} or 
			\emph{W-stationary} point of \eqref{eq:mpvc}
			if there exist multipliers
			$\bar\lambda\in\R^l$, $\bar\eta\in\R^m$,
			$\bar\mu,\bar\nu\in\R^p$ with
			\begin{subequations}
				\begin{align*}
% 					\label{eq:vc_wstat_x}
					&&&&\mathllap{
						\nabla f(\bar x) 
						+ \sum_{i\in I^l} \bar\lambda_i\nabla g_i(\bar x)
						+ \sum_{i\in I^m} \bar\eta_i\nabla h_i(\bar x)
						+ \sum_{i\in I^p} \bar\mu_i\nabla G_i(\bar x)
						- \sum_{i\in I^p} \bar\nu_i\nabla H_i(\bar x)
					}
					&=0, \\
					&\hspace{9em}&&\forall i\in I^g(\bar x):
					& \bar\lambda_i &\geq0,
					\\
					&&&\forall i\in I^l\setminus I^g(\bar x):
					& \bar\lambda_i &=0,
					\\
					&&&\forall i\in 
					I^{+0}(\bar x)\cup I^{-0}(\bar x)\cup I^{-+}(\bar x):
					& \bar\mu_i  &=0,
					\\
					&&&\forall i\in I^{0+}(\bar x)\cup I^{00}(\bar x):
					& \bar\mu_i  &\geq0,
					\\
					&&&\forall i\in I^{0+}(\bar x)\cup I^{-+}(\bar x):
					& \bar\nu_i  &=0,
					\\
% 					\label{eq:vc_wstat_0+}
					&&&\forall i\in I^{-0}(\bar x):
					& \bar\nu_i  &\geq0.
				\end{align*}
			\end{subequations}
		\item
			We call $\bar x$ an \emph{A-stationary} point if
			it is weakly stationary and the multipliers $\bar\mu,\bar\nu$
			satisfy the additional condition
			$\bar\mu_i=0\lor\bar\nu_i\geq0$
			for all $i\in I^{00}(\bar x)$.
		\item
			We call $\bar x$ a \emph{T-stationary} or \emph{C-stationary} point if
			it is weakly stationary and the multipliers $\bar\mu,\bar\nu$
			satisfy the additional condition
			$\bar\mu_i\bar\nu_i\leq0$
			for all $i\in I^{00}(\bar x)$.
		\item
			We call $\bar x$ an \emph{M-stationary} point if
			it is weakly stationary and the multipliers $\bar\mu,\bar\nu$
			satisfy the additional condition
			$\bar\mu_i\bar\nu_i=0$
			for all $i\in I^{00}(\bar x)$.
		\item
			We call $\bar x$ a \emph{strongly stationary} or 
			\emph{S-stationary} point if
			it is weakly stationary and the multipliers $\bar\mu,\bar\nu$
			satisfy the additional condition
			$\bar\mu_i=0\land \bar\nu_i\geq0$
			for all $i\in I^{00}(\bar x)$.
		\item
			We call $\bar x$ a \emph{linearized B-stationary} point
			if $-\nabla f(\bar x)\in \tmpvclin(\bar x)\polar$ holds.
	\end{enumerate}
\end{definition}
The definition of W,T,M,S-stationarity can be found in
\cite[Definition~2.3]{HoheiselKanzowSchwartz2012}.
As T-stationarity is an analogue to C-stationarity for MPCCs,
we assign C-stationarity as a synonymous name for it.
We are not aware of a previous 
definition of A-stationarity in the literature,
but we included it as an analogy to A-stationarity for MPCCs.

\begin{figure}[t]
	\centering
	\begin{subfigure}[b]{.3\textwidth}
		\def\castat{1}
		\def\aastat{1}
		\def\abstat{1}
		\centering
\begin{tikzpicture}[scale=1.1]
	\fill[blue, opacity=0.3]
	(1,0) -- (1,1) --(0,1) --(0,0) --cycle;
	\ifthenelse{\castat=1}{
		\fill[blue, opacity=0.3]
		(-1,0) -- (-1,-1) --(0,-1) --(0,0) --cycle;
	}{}
	\ifthenelse{\aastat=1}{
		\fill[blue, opacity=0.3]
		(1,0) -- (1,-1) --(0,-1) --(0,0) --cycle;
	}{}
	\ifthenelse{\abstat=1}{
		\fill[blue, opacity=0.3]
		(-1,0) -- (-1,1) --(0,1) --(0,0) --cycle;
	}{}
	\draw[axis] (-1,0) -- (1,0)
% 	node[right=2* \nudge cm] {$G(x)$\sometimes{, $\mu$}};
	node[right=2* \nudge cm] {$\bar\mu_i$};
	\draw[axis] (0,-1) -- (0,1)
% 	node[above=2*\nudge cm] {$H(x)$\sometimes{, $\nu$}};
	node[above=2*\nudge cm] {$\bar\nu_i$};
	\draw[line width=\feaslw,blue] (0,0) -- (1,0) ;
	\draw[line width=\feaslw,blue] (0,0) -- (0,1) ;
	\ifthenelse{\pastat=1 \OR \aastat=1 \OR \castat=1}{
		\draw[line width=\feaslw,blue] (0,-1) -- (0,0) ;
	}{}
	\ifthenelse{\cdstat=1}{
		\draw[line width=\feaslw,blue] (0,0) -- (-1,-1) ;
	}{}
	\ifthenelse{\weirdstat=1}{
		\draw[line width=0.7*\feaslw,smooth,blue,domain=0:1,variable=\x ] 
% 		plot ({-sin(300*\x)*sin(300*\x)}, {-\x});
		plot ({-sin(5*deg(\x))*sin(5*deg(\x))}, {-\x});
	}{}
	\ifthenelse{\pbstat=1 \OR \abstat=1 \OR \castat=1}{
		\draw[line width=\feaslw,blue] (-1,0) -- (0,0) ;
	}{}
\end{tikzpicture}
\def\pastat{0}
\def\pbstat{0}
\def\aastat{0}
\def\abstat{0}
\def\castat{0}
\def\cdstat{0}
		\caption{weak stationarity}
	\end{subfigure}
	\begin{subfigure}[b]{.3\textwidth}
		\def\aastat{1}
		\def\abstat{1}
		\centering
\begin{tikzpicture}[scale=1.1]
	\fill[blue, opacity=0.3]
	(1,0) -- (1,1) --(0,1) --(0,0) --cycle;
	\ifthenelse{\castat=1}{
		\fill[blue, opacity=0.3]
		(-1,0) -- (-1,-1) --(0,-1) --(0,0) --cycle;
	}{}
	\ifthenelse{\aastat=1}{
		\fill[blue, opacity=0.3]
		(1,0) -- (1,-1) --(0,-1) --(0,0) --cycle;
	}{}
	\ifthenelse{\abstat=1}{
		\fill[blue, opacity=0.3]
		(-1,0) -- (-1,1) --(0,1) --(0,0) --cycle;
	}{}
	\draw[axis] (-1,0) -- (1,0)
% 	node[right=2* \nudge cm] {$G(x)$\sometimes{, $\mu$}};
	node[right=2* \nudge cm] {$\bar\mu_i$};
	\draw[axis] (0,-1) -- (0,1)
% 	node[above=2*\nudge cm] {$H(x)$\sometimes{, $\nu$}};
	node[above=2*\nudge cm] {$\bar\nu_i$};
	\draw[line width=\feaslw,blue] (0,0) -- (1,0) ;
	\draw[line width=\feaslw,blue] (0,0) -- (0,1) ;
	\ifthenelse{\pastat=1 \OR \aastat=1 \OR \castat=1}{
		\draw[line width=\feaslw,blue] (0,-1) -- (0,0) ;
	}{}
	\ifthenelse{\cdstat=1}{
		\draw[line width=\feaslw,blue] (0,0) -- (-1,-1) ;
	}{}
	\ifthenelse{\weirdstat=1}{
		\draw[line width=0.7*\feaslw,smooth,blue,domain=0:1,variable=\x ] 
% 		plot ({-sin(300*\x)*sin(300*\x)}, {-\x});
		plot ({-sin(5*deg(\x))*sin(5*deg(\x))}, {-\x});
	}{}
	\ifthenelse{\pbstat=1 \OR \abstat=1 \OR \castat=1}{
		\draw[line width=\feaslw,blue] (-1,0) -- (0,0) ;
	}{}
\end{tikzpicture}
\def\pastat{0}
\def\pbstat{0}
\def\aastat{0}
\def\abstat{0}
\def\castat{0}
\def\cdstat{0}
		\caption{A-stationarity}
	\end{subfigure}
	\begin{subfigure}[b]{.3\textwidth}
		\def\castat{1}
		\centering
\begin{tikzpicture}[scale=1.1]
	\fill[blue, opacity=0.3]
	(1,0) -- (1,1) --(0,1) --(0,0) --cycle;
	\ifthenelse{\castat=1}{
		\fill[blue, opacity=0.3]
		(-1,0) -- (-1,-1) --(0,-1) --(0,0) --cycle;
	}{}
	\ifthenelse{\aastat=1}{
		\fill[blue, opacity=0.3]
		(1,0) -- (1,-1) --(0,-1) --(0,0) --cycle;
	}{}
	\ifthenelse{\abstat=1}{
		\fill[blue, opacity=0.3]
		(-1,0) -- (-1,1) --(0,1) --(0,0) --cycle;
	}{}
	\draw[axis] (-1,0) -- (1,0)
% 	node[right=2* \nudge cm] {$G(x)$\sometimes{, $\mu$}};
	node[right=2* \nudge cm] {$\bar\mu_i$};
	\draw[axis] (0,-1) -- (0,1)
% 	node[above=2*\nudge cm] {$H(x)$\sometimes{, $\nu$}};
	node[above=2*\nudge cm] {$\bar\nu_i$};
	\draw[line width=\feaslw,blue] (0,0) -- (1,0) ;
	\draw[line width=\feaslw,blue] (0,0) -- (0,1) ;
	\ifthenelse{\pastat=1 \OR \aastat=1 \OR \castat=1}{
		\draw[line width=\feaslw,blue] (0,-1) -- (0,0) ;
	}{}
	\ifthenelse{\cdstat=1}{
		\draw[line width=\feaslw,blue] (0,0) -- (-1,-1) ;
	}{}
	\ifthenelse{\weirdstat=1}{
		\draw[line width=0.7*\feaslw,smooth,blue,domain=0:1,variable=\x ] 
% 		plot ({-sin(300*\x)*sin(300*\x)}, {-\x});
		plot ({-sin(5*deg(\x))*sin(5*deg(\x))}, {-\x});
	}{}
	\ifthenelse{\pbstat=1 \OR \abstat=1 \OR \castat=1}{
		\draw[line width=\feaslw,blue] (-1,0) -- (0,0) ;
	}{}
\end{tikzpicture}
\def\pastat{0}
\def\pbstat{0}
\def\aastat{0}
\def\abstat{0}
\def\castat{0}
\def\cdstat{0}
		\caption{C-stationarity}
	\end{subfigure}
	\begin{subfigure}[b]{.3\textwidth}
		\def\aastat{1}
		\centering
\begin{tikzpicture}[scale=1.1]
	\fill[blue, opacity=0.3]
	(1,0) -- (1,1) --(0,1) --(0,0) --cycle;
	\ifthenelse{\castat=1}{
		\fill[blue, opacity=0.3]
		(-1,0) -- (-1,-1) --(0,-1) --(0,0) --cycle;
	}{}
	\ifthenelse{\aastat=1}{
		\fill[blue, opacity=0.3]
		(1,0) -- (1,-1) --(0,-1) --(0,0) --cycle;
	}{}
	\ifthenelse{\abstat=1}{
		\fill[blue, opacity=0.3]
		(-1,0) -- (-1,1) --(0,1) --(0,0) --cycle;
	}{}
	\draw[axis] (-1,0) -- (1,0)
% 	node[right=2* \nudge cm] {$G(x)$\sometimes{, $\mu$}};
	node[right=2* \nudge cm] {$\bar\mu_i$};
	\draw[axis] (0,-1) -- (0,1)
% 	node[above=2*\nudge cm] {$H(x)$\sometimes{, $\nu$}};
	node[above=2*\nudge cm] {$\bar\nu_i$};
	\draw[line width=\feaslw,blue] (0,0) -- (1,0) ;
	\draw[line width=\feaslw,blue] (0,0) -- (0,1) ;
	\ifthenelse{\pastat=1 \OR \aastat=1 \OR \castat=1}{
		\draw[line width=\feaslw,blue] (0,-1) -- (0,0) ;
	}{}
	\ifthenelse{\cdstat=1}{
		\draw[line width=\feaslw,blue] (0,0) -- (-1,-1) ;
	}{}
	\ifthenelse{\weirdstat=1}{
		\draw[line width=0.7*\feaslw,smooth,blue,domain=0:1,variable=\x ] 
% 		plot ({-sin(300*\x)*sin(300*\x)}, {-\x});
		plot ({-sin(5*deg(\x))*sin(5*deg(\x))}, {-\x});
	}{}
	\ifthenelse{\pbstat=1 \OR \abstat=1 \OR \castat=1}{
		\draw[line width=\feaslw,blue] (-1,0) -- (0,0) ;
	}{}
\end{tikzpicture}
\def\pastat{0}
\def\pbstat{0}
\def\aastat{0}
\def\abstat{0}
\def\castat{0}
\def\cdstat{0}
		\caption{\astat0-stationarity}
	\end{subfigure}
	\begin{subfigure}[b]{.3\textwidth}
		\def\abstat{1}
		\centering
\begin{tikzpicture}[scale=1.1]
	\fill[blue, opacity=0.3]
	(1,0) -- (1,1) --(0,1) --(0,0) --cycle;
	\ifthenelse{\castat=1}{
		\fill[blue, opacity=0.3]
		(-1,0) -- (-1,-1) --(0,-1) --(0,0) --cycle;
	}{}
	\ifthenelse{\aastat=1}{
		\fill[blue, opacity=0.3]
		(1,0) -- (1,-1) --(0,-1) --(0,0) --cycle;
	}{}
	\ifthenelse{\abstat=1}{
		\fill[blue, opacity=0.3]
		(-1,0) -- (-1,1) --(0,1) --(0,0) --cycle;
	}{}
	\draw[axis] (-1,0) -- (1,0)
% 	node[right=2* \nudge cm] {$G(x)$\sometimes{, $\mu$}};
	node[right=2* \nudge cm] {$\bar\mu_i$};
	\draw[axis] (0,-1) -- (0,1)
% 	node[above=2*\nudge cm] {$H(x)$\sometimes{, $\nu$}};
	node[above=2*\nudge cm] {$\bar\nu_i$};
	\draw[line width=\feaslw,blue] (0,0) -- (1,0) ;
	\draw[line width=\feaslw,blue] (0,0) -- (0,1) ;
	\ifthenelse{\pastat=1 \OR \aastat=1 \OR \castat=1}{
		\draw[line width=\feaslw,blue] (0,-1) -- (0,0) ;
	}{}
	\ifthenelse{\cdstat=1}{
		\draw[line width=\feaslw,blue] (0,0) -- (-1,-1) ;
	}{}
	\ifthenelse{\weirdstat=1}{
		\draw[line width=0.7*\feaslw,smooth,blue,domain=0:1,variable=\x ] 
% 		plot ({-sin(300*\x)*sin(300*\x)}, {-\x});
		plot ({-sin(5*deg(\x))*sin(5*deg(\x))}, {-\x});
	}{}
	\ifthenelse{\pbstat=1 \OR \abstat=1 \OR \castat=1}{
		\draw[line width=\feaslw,blue] (-1,0) -- (0,0) ;
	}{}
\end{tikzpicture}
\def\pastat{0}
\def\pbstat{0}
\def\aastat{0}
\def\abstat{0}
\def\castat{0}
\def\cdstat{0}
		\caption{\astat1-stationarity}
	\end{subfigure}
	\caption{geometric illustration of W-, A-, C-, 
		\astat\alpha-stationarity
	for MPCCs, $i\in I^{00}(\bar x)$}
	\label{fig:wac_stat_mpcc}
\end{figure}
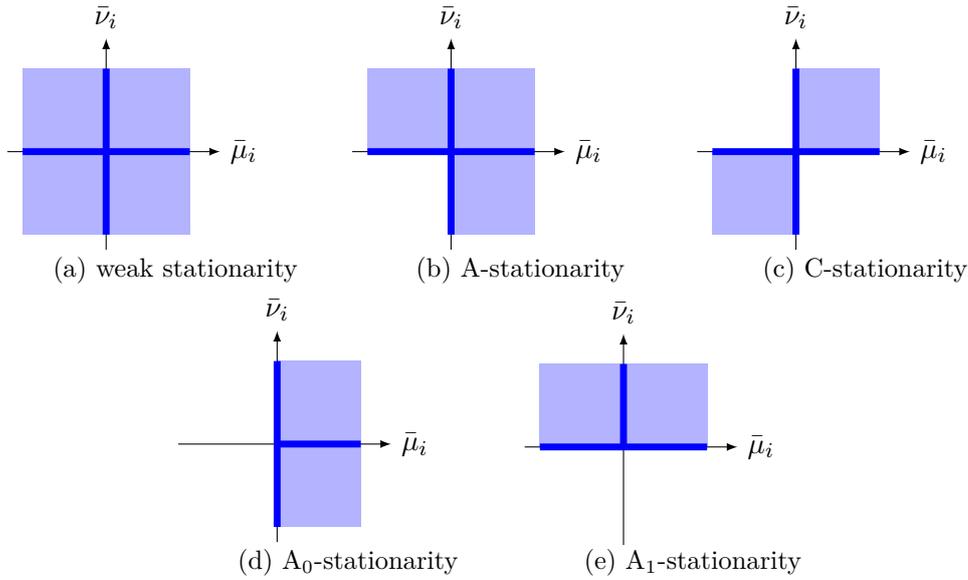

Let us also introduce some new stationarity conditions for \eqref{eq:mpcc}.
\begin{definition}
	\label{def:new_stat_mpcc}
	Let $\bar x\in\R^n$ be a feasible point of \eqref{eq:mpcc} and
	$\alpha\in\set{0,1}^p$, $d\in [0,1]^p$ be given.
	\begin{enumerate}
		\item
			\label{item:def:astat_mpcc}
			We call $\bar x$ \emph{\astat\alpha-stationary}
			if it is weakly stationary and the multipliers $\bar\mu$, $\bar\nu$
			satisfy the additional condition
			\begin{align*}
				\bar\mu_i &\geq0 
				\qquad\forall i\in I_{\alpha=0}^{00}(\bar x),
				\\
				\bar\nu_i &\geq0 
				\qquad\forall i\in I_{\alpha=1}^{00}(\bar x).
			\end{align*}
		\item
			\label{item:def_p_stat}
			We call $\bar x$ 
			\emph{P-stationary with respect to $\alpha$} or
			\emph{\pstat\alpha-stationary}
			if it is weakly stationary and the multipliers $\bar\mu$, $\bar\nu$
			satisfy the additional condition
			\begin{align*}
				(\bar\mu_i \geq0 \land\bar\nu_i\geq0) \lor \bar\mu_i =0
% 				\qquad\text{if}\; i\in I^{00}(\bar x),\,\alpha_i=0,
				\qquad\forall i\in I_{\alpha=0}^{00}(\bar x),
				\\
				(\bar\mu_i \geq0 \land\bar\nu_i\geq0) \lor \bar\nu_i =0
% 				\qquad\text{if}\; i\in I^{00}(\bar x),\,\alpha_i = 1.
				\qquad\forall i\in I_{\alpha=1}^{00}(\bar x).
			\end{align*}
		\item
			\label{item:def_cdstat}
			We call $\bar x$ 
			\emph{P-stationary with respect to $d$} or
			\emph{\cstat{d}-stationary}
			if it is weakly stationary and the multipliers $\bar\mu$, $\bar\nu$
			satisfy the additional condition
			\begin{align*}
				(\bar\mu_i \geq0 \land\bar\nu_i\geq0) \lor 
				(1-d_i)\bar\mu_i = d_i\bar\nu_i
				\qquad\forall i\in I^{00}(\bar x).
			\end{align*}
	\end{enumerate}
\end{definition}
It is easy to see that the definitions of part~\ref{item:def_p_stat}
and part~\ref{item:def_cdstat} are consistent if $\alpha=d$.
Nonetheless, we decided to write down part~\ref{item:def_p_stat}, 
as it is an important special case
(because it is stronger than M-stationarity).
Throughout the article, we use the variable $\alpha$
for vectors in $\set{0,1}^p$, and $d$ for vectors in $[0,1]^p$.

The \astat\alpha-stationarity condition was already observed
in \cite[p.~610]{FlegelKanzow2005:3} when A-stationarity was introduced,
but we use the name \astat\alpha-stationarity for disambiguation.

Let us make the analogous definitions for \eqref{eq:mpvc}.
\begin{definition}
	\label{def:new_stat_mpvc}
	Let $\bar x\in\R^n$ be a feasible point of \eqref{eq:mpvc} and
	$\alpha\in\set{0,1}^p$, $d\in [0,1]^p$ be given.
	\begin{enumerate}
		\item
			\label{item:def:astat_mpvc}
			We call $\bar x$ \emph{\astat\alpha-stationary}
			if it is weakly stationary and the multipliers $\bar\mu$, $\bar\nu$
			satisfy
			\begin{align*}
				\bar\mu_i &=0 \qquad\forall i\in I_{\alpha=0}^{00}(\bar x),
				\\
				\bar\nu_i &\geq0 \qquad\forall i\in I_{\alpha=1}^{00}(\bar x).
			\end{align*}
		\item
			\label{item:def:p_stat_mpvc}
			We call $\bar x$ \emph{\pstat\alpha-stationary}
			if it is weakly stationary and the multipliers $\bar\mu$, $\bar\nu$
			satisfy
			\begin{align*}
				\bar\mu_i &=0 \qquad\forall i\in I_{\alpha=0}^{00}(\bar x),
				\\
% 				\bar\nu_i\geq0\land\bar\mu_i\bar\nu_i &=0
				(\bar\mu_i=0\land\bar\nu_i\geq0)\lor \bar\nu_i &=0
				\qquad\forall i\in I_{\alpha=1}^{00}(\bar x).
			\end{align*}
		\item
			\label{item:def:cd_stat_mpvc}
			We call $\bar x$ \cstat{d}-stationary
			if it is weakly stationary and the multipliers $\bar\mu$, $\bar\nu$
			satisfy
			\begin{align*}
				(\bar\mu_i =0 \land\bar\nu_i\geq0) \lor 
				(1-d_i)\bar\mu_i = -d_i\bar\nu_i
				\qquad\forall i\in I^{00}(\bar x).
			\end{align*}
	\end{enumerate}
\end{definition}
\begin{figure}[t]
	\centering
	\begin{subfigure}[b]{.3\textwidth}
		\def\castat{1}
		\def\aastat{1}
		\def\abstat{1}
		\centering
\begin{tikzpicture}[scale=1.1]
% 	\fill[blue, opacity=0.3]
% 	(1,0) -- (1,1) --(0,1) --(0,0) --cycle;
	\ifthenelse{\castat=1}{
		\fill[blue, opacity=0.3]
		(1,0) -- (1,-1) --(0,-1) --(0,0) --cycle;
	}{}
% 	\ifthenelse{\aastat=1}{
% 		\fill[blue, opacity=0.3]
% 		(1,0) -- (1,-1) --(0,-1) --(0,0) --cycle;
% 	}{}
	\ifthenelse{\abstat=1}{
		\fill[blue, opacity=0.3]
		(1,0) -- (1,1) --(0,1) --(0,0) --cycle;
	}{}
	\draw[axis] (-1,0) -- (1,0)
% 	node[right=2* \nudge cm] {$G(x)$\sometimes{, $\mu$}};
	node[right=2* \nudge cm] {$\bar\mu_i$};
	\draw[axis] (0,-1) -- (0,1)
% 	node[above=2*\nudge cm] {$H(x)$\sometimes{, $\nu$}};
	node[above=2*\nudge cm] {$\bar\nu_i$};
% 	\draw[line width=\feaslw,blue] (0,0) -- (1,0) ;
	\draw[line width=\feaslw,blue] (0,0) -- (0,1) ;
	\ifthenelse{\pastat=1 \OR \aastat=1 \OR \castat=1}{
		\draw[line width=\feaslw,blue] (0,-1) -- (0,0) ;
	}{}
	\ifthenelse{\cdstat=1}{
		\draw[line width=\feaslw,blue] (0,0) -- (1,-1) ;
	}{}
	\ifthenelse{\weirdstat=1}{
		\draw[line width=0.7*\feaslw,blue,domain=0:1,variable=\x ] 
		(0,0) -- plot ({(\x)}, {-\x*\x});
% 		(0,0) -- plot ({sqrt(\x)-\x/3}, {-\x});
	}{}
	\ifthenelse{\pbstat=1 \OR \abstat=1 \OR \castat=1}{
		\draw[line width=\feaslw,blue] (1,0) -- (0,0) ;
	}{}
\end{tikzpicture}
\def\pastat{0}
\def\pbstat{0}
\def\aastat{0}
\def\abstat{0}
\def\castat{0}
\def\cdstat{0}
\def\weirdstat{0}
		\caption{weak stationarity}
	\end{subfigure}
	\begin{subfigure}[b]{.3\textwidth}
		\def\aastat{1}
		\def\abstat{1}
		\centering
\begin{tikzpicture}[scale=1.1]
% 	\fill[blue, opacity=0.3]
% 	(1,0) -- (1,1) --(0,1) --(0,0) --cycle;
	\ifthenelse{\castat=1}{
		\fill[blue, opacity=0.3]
		(1,0) -- (1,-1) --(0,-1) --(0,0) --cycle;
	}{}
% 	\ifthenelse{\aastat=1}{
% 		\fill[blue, opacity=0.3]
% 		(1,0) -- (1,-1) --(0,-1) --(0,0) --cycle;
% 	}{}
	\ifthenelse{\abstat=1}{
		\fill[blue, opacity=0.3]
		(1,0) -- (1,1) --(0,1) --(0,0) --cycle;
	}{}
	\draw[axis] (-1,0) -- (1,0)
% 	node[right=2* \nudge cm] {$G(x)$\sometimes{, $\mu$}};
	node[right=2* \nudge cm] {$\bar\mu_i$};
	\draw[axis] (0,-1) -- (0,1)
% 	node[above=2*\nudge cm] {$H(x)$\sometimes{, $\nu$}};
	node[above=2*\nudge cm] {$\bar\nu_i$};
% 	\draw[line width=\feaslw,blue] (0,0) -- (1,0) ;
	\draw[line width=\feaslw,blue] (0,0) -- (0,1) ;
	\ifthenelse{\pastat=1 \OR \aastat=1 \OR \castat=1}{
		\draw[line width=\feaslw,blue] (0,-1) -- (0,0) ;
	}{}
	\ifthenelse{\cdstat=1}{
		\draw[line width=\feaslw,blue] (0,0) -- (1,-1) ;
	}{}
	\ifthenelse{\weirdstat=1}{
		\draw[line width=0.7*\feaslw,blue,domain=0:1,variable=\x ] 
		(0,0) -- plot ({(\x)}, {-\x*\x});
% 		(0,0) -- plot ({sqrt(\x)-\x/3}, {-\x});
	}{}
	\ifthenelse{\pbstat=1 \OR \abstat=1 \OR \castat=1}{
		\draw[line width=\feaslw,blue] (1,0) -- (0,0) ;
	}{}
\end{tikzpicture}
\def\pastat{0}
\def\pbstat{0}
\def\aastat{0}
\def\abstat{0}
\def\castat{0}
\def\cdstat{0}
\def\weirdstat{0}
		\caption{A-stationarity}
	\end{subfigure}
	\begin{subfigure}[b]{.3\textwidth}
		\def\castat{1}
		\centering
\begin{tikzpicture}[scale=1.1]
% 	\fill[blue, opacity=0.3]
% 	(1,0) -- (1,1) --(0,1) --(0,0) --cycle;
	\ifthenelse{\castat=1}{
		\fill[blue, opacity=0.3]
		(1,0) -- (1,-1) --(0,-1) --(0,0) --cycle;
	}{}
% 	\ifthenelse{\aastat=1}{
% 		\fill[blue, opacity=0.3]
% 		(1,0) -- (1,-1) --(0,-1) --(0,0) --cycle;
% 	}{}
	\ifthenelse{\abstat=1}{
		\fill[blue, opacity=0.3]
		(1,0) -- (1,1) --(0,1) --(0,0) --cycle;
	}{}
	\draw[axis] (-1,0) -- (1,0)
% 	node[right=2* \nudge cm] {$G(x)$\sometimes{, $\mu$}};
	node[right=2* \nudge cm] {$\bar\mu_i$};
	\draw[axis] (0,-1) -- (0,1)
% 	node[above=2*\nudge cm] {$H(x)$\sometimes{, $\nu$}};
	node[above=2*\nudge cm] {$\bar\nu_i$};
% 	\draw[line width=\feaslw,blue] (0,0) -- (1,0) ;
	\draw[line width=\feaslw,blue] (0,0) -- (0,1) ;
	\ifthenelse{\pastat=1 \OR \aastat=1 \OR \castat=1}{
		\draw[line width=\feaslw,blue] (0,-1) -- (0,0) ;
	}{}
	\ifthenelse{\cdstat=1}{
		\draw[line width=\feaslw,blue] (0,0) -- (1,-1) ;
	}{}
	\ifthenelse{\weirdstat=1}{
		\draw[line width=0.7*\feaslw,blue,domain=0:1,variable=\x ] 
		(0,0) -- plot ({(\x)}, {-\x*\x});
% 		(0,0) -- plot ({sqrt(\x)-\x/3}, {-\x});
	}{}
	\ifthenelse{\pbstat=1 \OR \abstat=1 \OR \castat=1}{
		\draw[line width=\feaslw,blue] (1,0) -- (0,0) ;
	}{}
\end{tikzpicture}
\def\pastat{0}
\def\pbstat{0}
\def\aastat{0}
\def\abstat{0}
\def\castat{0}
\def\cdstat{0}
\def\weirdstat{0}
		\caption{T-/C-stationarity}
	\end{subfigure}
	\begin{subfigure}[b]{.3\textwidth}
		\def\pastat{1}
		\def\pbstat{1}
		\centering
\begin{tikzpicture}[scale=1.1]
% 	\fill[blue, opacity=0.3]
% 	(1,0) -- (1,1) --(0,1) --(0,0) --cycle;
	\ifthenelse{\castat=1}{
		\fill[blue, opacity=0.3]
		(1,0) -- (1,-1) --(0,-1) --(0,0) --cycle;
	}{}
% 	\ifthenelse{\aastat=1}{
% 		\fill[blue, opacity=0.3]
% 		(1,0) -- (1,-1) --(0,-1) --(0,0) --cycle;
% 	}{}
	\ifthenelse{\abstat=1}{
		\fill[blue, opacity=0.3]
		(1,0) -- (1,1) --(0,1) --(0,0) --cycle;
	}{}
	\draw[axis] (-1,0) -- (1,0)
% 	node[right=2* \nudge cm] {$G(x)$\sometimes{, $\mu$}};
	node[right=2* \nudge cm] {$\bar\mu_i$};
	\draw[axis] (0,-1) -- (0,1)
% 	node[above=2*\nudge cm] {$H(x)$\sometimes{, $\nu$}};
	node[above=2*\nudge cm] {$\bar\nu_i$};
% 	\draw[line width=\feaslw,blue] (0,0) -- (1,0) ;
	\draw[line width=\feaslw,blue] (0,0) -- (0,1) ;
	\ifthenelse{\pastat=1 \OR \aastat=1 \OR \castat=1}{
		\draw[line width=\feaslw,blue] (0,-1) -- (0,0) ;
	}{}
	\ifthenelse{\cdstat=1}{
		\draw[line width=\feaslw,blue] (0,0) -- (1,-1) ;
	}{}
	\ifthenelse{\weirdstat=1}{
		\draw[line width=0.7*\feaslw,blue,domain=0:1,variable=\x ] 
		(0,0) -- plot ({(\x)}, {-\x*\x});
% 		(0,0) -- plot ({sqrt(\x)-\x/3}, {-\x});
	}{}
	\ifthenelse{\pbstat=1 \OR \abstat=1 \OR \castat=1}{
		\draw[line width=\feaslw,blue] (1,0) -- (0,0) ;
	}{}
\end{tikzpicture}
\def\pastat{0}
\def\pbstat{0}
\def\aastat{0}
\def\abstat{0}
\def\castat{0}
\def\cdstat{0}
\def\weirdstat{0}
		\caption{M-stationarity}
		\label{fig:mstat_mpvc}
	\end{subfigure}
	\begin{subfigure}[b]{.3\textwidth}
		\centering
\begin{tikzpicture}[scale=1.1]
% 	\fill[blue, opacity=0.3]
% 	(1,0) -- (1,1) --(0,1) --(0,0) --cycle;
	\ifthenelse{\castat=1}{
		\fill[blue, opacity=0.3]
		(1,0) -- (1,-1) --(0,-1) --(0,0) --cycle;
	}{}
% 	\ifthenelse{\aastat=1}{
% 		\fill[blue, opacity=0.3]
% 		(1,0) -- (1,-1) --(0,-1) --(0,0) --cycle;
% 	}{}
	\ifthenelse{\abstat=1}{
		\fill[blue, opacity=0.3]
		(1,0) -- (1,1) --(0,1) --(0,0) --cycle;
	}{}
	\draw[axis] (-1,0) -- (1,0)
% 	node[right=2* \nudge cm] {$G(x)$\sometimes{, $\mu$}};
	node[right=2* \nudge cm] {$\bar\mu_i$};
	\draw[axis] (0,-1) -- (0,1)
% 	node[above=2*\nudge cm] {$H(x)$\sometimes{, $\nu$}};
	node[above=2*\nudge cm] {$\bar\nu_i$};
% 	\draw[line width=\feaslw,blue] (0,0) -- (1,0) ;
	\draw[line width=\feaslw,blue] (0,0) -- (0,1) ;
	\ifthenelse{\pastat=1 \OR \aastat=1 \OR \castat=1}{
		\draw[line width=\feaslw,blue] (0,-1) -- (0,0) ;
	}{}
	\ifthenelse{\cdstat=1}{
		\draw[line width=\feaslw,blue] (0,0) -- (1,-1) ;
	}{}
	\ifthenelse{\weirdstat=1}{
		\draw[line width=0.7*\feaslw,blue,domain=0:1,variable=\x ] 
		(0,0) -- plot ({(\x)}, {-\x*\x});
% 		(0,0) -- plot ({sqrt(\x)-\x/3}, {-\x});
	}{}
	\ifthenelse{\pbstat=1 \OR \abstat=1 \OR \castat=1}{
		\draw[line width=\feaslw,blue] (1,0) -- (0,0) ;
	}{}
\end{tikzpicture}
\def\pastat{0}
\def\pbstat{0}
\def\aastat{0}
\def\abstat{0}
\def\castat{0}
\def\cdstat{0}
\def\weirdstat{0}
		\caption{S-stationarity}
	\end{subfigure}
	\caption{geometric illustration of stationarity conditions
	for MPVCs with $i\in I^{00}(\bar x)$}
	\label{fig:stat_mpvc}
\end{figure}
\begin{figure}[t]
	\centering
	\begin{subfigure}[b]{.3\textwidth}
		\def\aastat{1}
		\centering
\begin{tikzpicture}[scale=1.1]
% 	\fill[blue, opacity=0.3]
% 	(1,0) -- (1,1) --(0,1) --(0,0) --cycle;
	\ifthenelse{\castat=1}{
		\fill[blue, opacity=0.3]
		(1,0) -- (1,-1) --(0,-1) --(0,0) --cycle;
	}{}
% 	\ifthenelse{\aastat=1}{
% 		\fill[blue, opacity=0.3]
% 		(1,0) -- (1,-1) --(0,-1) --(0,0) --cycle;
% 	}{}
	\ifthenelse{\abstat=1}{
		\fill[blue, opacity=0.3]
		(1,0) -- (1,1) --(0,1) --(0,0) --cycle;
	}{}
	\draw[axis] (-1,0) -- (1,0)
% 	node[right=2* \nudge cm] {$G(x)$\sometimes{, $\mu$}};
	node[right=2* \nudge cm] {$\bar\mu_i$};
	\draw[axis] (0,-1) -- (0,1)
% 	node[above=2*\nudge cm] {$H(x)$\sometimes{, $\nu$}};
	node[above=2*\nudge cm] {$\bar\nu_i$};
% 	\draw[line width=\feaslw,blue] (0,0) -- (1,0) ;
	\draw[line width=\feaslw,blue] (0,0) -- (0,1) ;
	\ifthenelse{\pastat=1 \OR \aastat=1 \OR \castat=1}{
		\draw[line width=\feaslw,blue] (0,-1) -- (0,0) ;
	}{}
	\ifthenelse{\cdstat=1}{
		\draw[line width=\feaslw,blue] (0,0) -- (1,-1) ;
	}{}
	\ifthenelse{\weirdstat=1}{
		\draw[line width=0.7*\feaslw,blue,domain=0:1,variable=\x ] 
		(0,0) -- plot ({(\x)}, {-\x*\x});
% 		(0,0) -- plot ({sqrt(\x)-\x/3}, {-\x});
	}{}
	\ifthenelse{\pbstat=1 \OR \abstat=1 \OR \castat=1}{
		\draw[line width=\feaslw,blue] (1,0) -- (0,0) ;
	}{}
\end{tikzpicture}
\def\pastat{0}
\def\pbstat{0}
\def\aastat{0}
\def\abstat{0}
\def\castat{0}
\def\cdstat{0}
\def\weirdstat{0}
		\caption{\astat0-stationarity}
		\label{fig:astat0_mpvc}
	\end{subfigure}
	\begin{subfigure}[b]{.3\textwidth}
		\def\abstat{1}
		\centering
\begin{tikzpicture}[scale=1.1]
% 	\fill[blue, opacity=0.3]
% 	(1,0) -- (1,1) --(0,1) --(0,0) --cycle;
	\ifthenelse{\castat=1}{
		\fill[blue, opacity=0.3]
		(1,0) -- (1,-1) --(0,-1) --(0,0) --cycle;
	}{}
% 	\ifthenelse{\aastat=1}{
% 		\fill[blue, opacity=0.3]
% 		(1,0) -- (1,-1) --(0,-1) --(0,0) --cycle;
% 	}{}
	\ifthenelse{\abstat=1}{
		\fill[blue, opacity=0.3]
		(1,0) -- (1,1) --(0,1) --(0,0) --cycle;
	}{}
	\draw[axis] (-1,0) -- (1,0)
% 	node[right=2* \nudge cm] {$G(x)$\sometimes{, $\mu$}};
	node[right=2* \nudge cm] {$\bar\mu_i$};
	\draw[axis] (0,-1) -- (0,1)
% 	node[above=2*\nudge cm] {$H(x)$\sometimes{, $\nu$}};
	node[above=2*\nudge cm] {$\bar\nu_i$};
% 	\draw[line width=\feaslw,blue] (0,0) -- (1,0) ;
	\draw[line width=\feaslw,blue] (0,0) -- (0,1) ;
	\ifthenelse{\pastat=1 \OR \aastat=1 \OR \castat=1}{
		\draw[line width=\feaslw,blue] (0,-1) -- (0,0) ;
	}{}
	\ifthenelse{\cdstat=1}{
		\draw[line width=\feaslw,blue] (0,0) -- (1,-1) ;
	}{}
	\ifthenelse{\weirdstat=1}{
		\draw[line width=0.7*\feaslw,blue,domain=0:1,variable=\x ] 
		(0,0) -- plot ({(\x)}, {-\x*\x});
% 		(0,0) -- plot ({sqrt(\x)-\x/3}, {-\x});
	}{}
	\ifthenelse{\pbstat=1 \OR \abstat=1 \OR \castat=1}{
		\draw[line width=\feaslw,blue] (1,0) -- (0,0) ;
	}{}
\end{tikzpicture}
\def\pastat{0}
\def\pbstat{0}
\def\aastat{0}
\def\abstat{0}
\def\castat{0}
\def\cdstat{0}
\def\weirdstat{0}
		\caption{\astat1-stationarity}
	\end{subfigure}
	\begin{subfigure}[b]{.3\textwidth}
		\def\pastat{1}
		\centering
\begin{tikzpicture}[scale=1.1]
% 	\fill[blue, opacity=0.3]
% 	(1,0) -- (1,1) --(0,1) --(0,0) --cycle;
	\ifthenelse{\castat=1}{
		\fill[blue, opacity=0.3]
		(1,0) -- (1,-1) --(0,-1) --(0,0) --cycle;
	}{}
% 	\ifthenelse{\aastat=1}{
% 		\fill[blue, opacity=0.3]
% 		(1,0) -- (1,-1) --(0,-1) --(0,0) --cycle;
% 	}{}
	\ifthenelse{\abstat=1}{
		\fill[blue, opacity=0.3]
		(1,0) -- (1,1) --(0,1) --(0,0) --cycle;
	}{}
	\draw[axis] (-1,0) -- (1,0)
% 	node[right=2* \nudge cm] {$G(x)$\sometimes{, $\mu$}};
	node[right=2* \nudge cm] {$\bar\mu_i$};
	\draw[axis] (0,-1) -- (0,1)
% 	node[above=2*\nudge cm] {$H(x)$\sometimes{, $\nu$}};
	node[above=2*\nudge cm] {$\bar\nu_i$};
% 	\draw[line width=\feaslw,blue] (0,0) -- (1,0) ;
	\draw[line width=\feaslw,blue] (0,0) -- (0,1) ;
	\ifthenelse{\pastat=1 \OR \aastat=1 \OR \castat=1}{
		\draw[line width=\feaslw,blue] (0,-1) -- (0,0) ;
	}{}
	\ifthenelse{\cdstat=1}{
		\draw[line width=\feaslw,blue] (0,0) -- (1,-1) ;
	}{}
	\ifthenelse{\weirdstat=1}{
		\draw[line width=0.7*\feaslw,blue,domain=0:1,variable=\x ] 
		(0,0) -- plot ({(\x)}, {-\x*\x});
% 		(0,0) -- plot ({sqrt(\x)-\x/3}, {-\x});
	}{}
	\ifthenelse{\pbstat=1 \OR \abstat=1 \OR \castat=1}{
		\draw[line width=\feaslw,blue] (1,0) -- (0,0) ;
	}{}
\end{tikzpicture}
\def\pastat{0}
\def\pbstat{0}
\def\aastat{0}
\def\abstat{0}
\def\castat{0}
\def\cdstat{0}
\def\weirdstat{0}
		\caption{\pstat0-stationarity}
	\end{subfigure}
	\begin{subfigure}[b]{.3\textwidth}
		\def\pbstat{1}
		\centering
\begin{tikzpicture}[scale=1.1]
% 	\fill[blue, opacity=0.3]
% 	(1,0) -- (1,1) --(0,1) --(0,0) --cycle;
	\ifthenelse{\castat=1}{
		\fill[blue, opacity=0.3]
		(1,0) -- (1,-1) --(0,-1) --(0,0) --cycle;
	}{}
% 	\ifthenelse{\aastat=1}{
% 		\fill[blue, opacity=0.3]
% 		(1,0) -- (1,-1) --(0,-1) --(0,0) --cycle;
% 	}{}
	\ifthenelse{\abstat=1}{
		\fill[blue, opacity=0.3]
		(1,0) -- (1,1) --(0,1) --(0,0) --cycle;
	}{}
	\draw[axis] (-1,0) -- (1,0)
% 	node[right=2* \nudge cm] {$G(x)$\sometimes{, $\mu$}};
	node[right=2* \nudge cm] {$\bar\mu_i$};
	\draw[axis] (0,-1) -- (0,1)
% 	node[above=2*\nudge cm] {$H(x)$\sometimes{, $\nu$}};
	node[above=2*\nudge cm] {$\bar\nu_i$};
% 	\draw[line width=\feaslw,blue] (0,0) -- (1,0) ;
	\draw[line width=\feaslw,blue] (0,0) -- (0,1) ;
	\ifthenelse{\pastat=1 \OR \aastat=1 \OR \castat=1}{
		\draw[line width=\feaslw,blue] (0,-1) -- (0,0) ;
	}{}
	\ifthenelse{\cdstat=1}{
		\draw[line width=\feaslw,blue] (0,0) -- (1,-1) ;
	}{}
	\ifthenelse{\weirdstat=1}{
		\draw[line width=0.7*\feaslw,blue,domain=0:1,variable=\x ] 
		(0,0) -- plot ({(\x)}, {-\x*\x});
% 		(0,0) -- plot ({sqrt(\x)-\x/3}, {-\x});
	}{}
	\ifthenelse{\pbstat=1 \OR \abstat=1 \OR \castat=1}{
		\draw[line width=\feaslw,blue] (1,0) -- (0,0) ;
	}{}
\end{tikzpicture}
\def\pastat{0}
\def\pbstat{0}
\def\aastat{0}
\def\abstat{0}
\def\castat{0}
\def\cdstat{0}
\def\weirdstat{0}
		\caption{\pstat1-stationarity}
	\end{subfigure}
	\begin{subfigure}[b]{.3\textwidth}
		\def\cdstat{1}
		\centering
\begin{tikzpicture}[scale=1.1]
% 	\fill[blue, opacity=0.3]
% 	(1,0) -- (1,1) --(0,1) --(0,0) --cycle;
	\ifthenelse{\castat=1}{
		\fill[blue, opacity=0.3]
		(1,0) -- (1,-1) --(0,-1) --(0,0) --cycle;
	}{}
% 	\ifthenelse{\aastat=1}{
% 		\fill[blue, opacity=0.3]
% 		(1,0) -- (1,-1) --(0,-1) --(0,0) --cycle;
% 	}{}
	\ifthenelse{\abstat=1}{
		\fill[blue, opacity=0.3]
		(1,0) -- (1,1) --(0,1) --(0,0) --cycle;
	}{}
	\draw[axis] (-1,0) -- (1,0)
% 	node[right=2* \nudge cm] {$G(x)$\sometimes{, $\mu$}};
	node[right=2* \nudge cm] {$\bar\mu_i$};
	\draw[axis] (0,-1) -- (0,1)
% 	node[above=2*\nudge cm] {$H(x)$\sometimes{, $\nu$}};
	node[above=2*\nudge cm] {$\bar\nu_i$};
% 	\draw[line width=\feaslw,blue] (0,0) -- (1,0) ;
	\draw[line width=\feaslw,blue] (0,0) -- (0,1) ;
	\ifthenelse{\pastat=1 \OR \aastat=1 \OR \castat=1}{
		\draw[line width=\feaslw,blue] (0,-1) -- (0,0) ;
	}{}
	\ifthenelse{\cdstat=1}{
		\draw[line width=\feaslw,blue] (0,0) -- (1,-1) ;
	}{}
	\ifthenelse{\weirdstat=1}{
		\draw[line width=0.7*\feaslw,blue,domain=0:1,variable=\x ] 
		(0,0) -- plot ({(\x)}, {-\x*\x});
% 		(0,0) -- plot ({sqrt(\x)-\x/3}, {-\x});
	}{}
	\ifthenelse{\pbstat=1 \OR \abstat=1 \OR \castat=1}{
		\draw[line width=\feaslw,blue] (1,0) -- (0,0) ;
	}{}
\end{tikzpicture}
\def\pastat{0}
\def\pbstat{0}
\def\aastat{0}
\def\abstat{0}
\def\castat{0}
\def\cdstat{0}
\def\weirdstat{0}
		\caption{\cstat{1/2}-stationarity}
	\end{subfigure}
	\caption{geometric illustration of stationarity conditions
	for MPVCs with $i\in I^{00}(\bar x)$}
	\label{fig:new_stat_mpvc}
\end{figure}
Recall that $\bar\mu_i\geq0$ holds for $i\in I^{00}(\bar x)$
for weakly stationary points of \eqref{eq:mpvc}.
Again, one can see that parts~\ref{item:def:p_stat_mpvc}
and parts~\ref{item:def:cd_stat_mpvc}
are consistent if $d=\alpha$.

With the exception of linearized B-stationarity,
the stationarity conditions of
\cref{def:stat,def:mpvc_stat,def:new_stat_mpcc,def:new_stat_mpvc}
are illustrated in
\cref{fig:new_stat,fig:wac_stat_mpcc,fig:stat_mpvc,fig:new_stat_mpvc}.

The following relations for the new
stationarity conditions follow directly from
the definitions.
\begin{corollary}
	\label{cor:relations}
	Let $\bar x$ be a feasible point of 
	\eqref{eq:mpcc} or \eqref{eq:mpvc}.
	\begin{enumerate}
% 		\item
% 			\label{item:cd_implies_palpha}
% 			If $\bar x$ is \cstat{d}-stationary for all $d\in [0,1]^p$,
% 			then it is \pstat\alpha-stationary for all $\alpha\in\set{0,1}^p$.
		\item
			\label{item:pstat_implies_astat}
			If $\bar x$ is \pstat\alpha-stationary for some
			$\alpha\in\set{0,1}^p$, then it is \astat\alpha-stationary.
		\item
			The point $\bar x$ is A-stationary if and only if
			there exists some $\alpha\in\set{0,1}^p$ such that
			$\bar x$ is \astat\alpha-stationary.
		\item
			The point $\bar x$ is C-stationary if and only if
			there exists some $d\in [0,1]^p$ such that
			$\bar x$ is \cstat{d}-stationary.
		\item
			The point $\bar x$ is M-stationary if and only if
			there exists some $\alpha\in\set{0,1}^p$ such that
			$\bar x$ is \pstat\alpha-stationary.
		\item
			In the case of MPVCs, \astat0-stationarity is the same as
			\pstat0-stationarity.
	\end{enumerate}
\end{corollary}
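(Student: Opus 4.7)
The plan is to verify each of the five assertions by unpacking the relevant definitions index-by-index on $I^{00}(\bar x)$, using the same multipliers $\bar\lambda,\bar\eta,\bar\mu,\bar\nu$ on both sides of every implication. Since weak stationarity is common to all the conditions under consideration, each claim reduces to a purely algebraic statement about the pairs $(\bar\mu_i,\bar\nu_i)$ for $i\in I^{00}(\bar x)$.

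For part~\ref{item:pstat_implies_astat}, both disjuncts in the definition of \pstat\alpha-stationarity (for MPCCs and MPVCs alike) imply the corresponding sign condition in \astat\alpha-stationarity: when $\alpha_i=0$ we obtain $\bar\mu_i\geq 0$ (respectively $\bar\mu_i=0$ in the MPVC case) and when $\alpha_i=1$ we obtain $\bar\nu_i\geq 0$, using that $\bar\nu_i=0$ also satisfies $\bar\nu_i\geq 0$. The ``if'' directions of parts~(b)--(d) are of the same flavour; the only small computation needed is the observation that $(1-d_i)\bar\mu_i=d_i\bar\nu_i$ with $d_i\in[0,1]$ forces $\bar\mu_i\bar\nu_i\geq 0$, and analogously $(1-d_i)\bar\mu_i=-d_i\bar\nu_i$ together with $\bar\mu_i\geq 0$ forces $\bar\mu_i\bar\nu_i\leq 0$ in the MPVC case.

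For the ``only if'' directions I would build $\alpha$ or $d$ pointwise on $I^{00}(\bar x)$ (with arbitrary values off this set). From A-stationarity, set $\alpha_i=0$ if $\bar\mu_i\geq 0$ and $\alpha_i=1$ otherwise, which by the A-stationarity disjunction forces $\bar\nu_i\geq 0$ in the second branch. From M-stationarity, the alternative $\bar\mu_i\bar\nu_i=0$ makes at least one multiplier vanish, so one picks $\alpha_i$ to select that one. The only step requiring an actual construction is part~(c): given MPCC C-stationarity with $\bar\mu_i\bar\nu_i\geq 0$, the case of both multipliers non-negative is covered by the first disjunct with any $d_i$, while if both are non-positive and not both zero I would set $d_i=\bar\mu_i/(\bar\mu_i+\bar\nu_i)\in[0,1]$, which solves $(1-d_i)\bar\mu_i=d_i\bar\nu_i$; the degenerate cases where exactly one multiplier vanishes are handled by $d_i=0$ or $d_i=1$. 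The MPVC version is analogous with $d_i=\bar\mu_i/(\bar\mu_i-\bar\nu_i)$, using the standing sign $\bar\mu_i\geq 0$.

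Part~(e) is immediate: when $\alpha=0$ the set $I_{\alpha=1}^{00}(\bar x)$ is empty, so the second clauses in both Definition~\ref{def:new_stat_mpvc}~\ref{item:def:astat_mpvc} and \ref{item:def:p_stat_mpvc} are vacuous and the two conditions both collapse to $\bar\mu_i=0$ for all $i\in I^{00}(\bar x)$. No step presents a real obstacle; the only care required is in the boundary cases of the explicit construction of $d$ in part~(c).
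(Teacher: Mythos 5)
The paper itself gives no proof of this corollary beyond the remark that the relations ``follow directly from the definitions,'' and your index-by-index verification on $I^{00}(\bar x)$ is exactly that intended argument. Parts (a), (c), (d) and (e) are handled correctly for both problem classes; in particular your explicit constructions $d_i=\bar\mu_i/(\bar\mu_i+\bar\nu_i)$ (MPCC, both multipliers nonpositive) and $d_i=\bar\mu_i/(\bar\mu_i-\bar\nu_i)$ (MPVC) do solve $(1-d_i)\bar\mu_i=d_i\bar\nu_i$ and $(1-d_i)\bar\mu_i=-d_i\bar\nu_i$ respectively, with $d_i\in[0,1]$.

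The one step that fails as written is the ``only if'' direction of part (b) in the MPVC case. Your selection rule ``set $\alpha_i=0$ if $\bar\mu_i\geq0$ and $\alpha_i=1$ otherwise'' is tailored to MPCCs: for MPVCs, weak stationarity (\cref{def:mpvc_stat}) already forces $\bar\mu_i\geq0$ for every $i\in I^{00}(\bar x)$, so your rule would set $\alpha_i=0$ at every such index, and \astat\alpha-stationarity in the MPVC sense (\cref{def:new_stat_mpvc}~\ref{item:def:astat_mpvc}) would then demand $\bar\mu_i=0$ on all of $I^{00}(\bar x)$ --- which MPVC A-stationarity, i.e.\ $\bar\mu_i=0\lor\bar\nu_i\geq0$, does not provide: the pair $(\bar\mu_i,\bar\nu_i)=(1,1)$ is admissible for A-stationarity but violates $\bar\mu_i=0$. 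The repair is immediate and stays entirely within your scheme: for MPVCs choose $\alpha_i=0$ precisely when $\bar\mu_i=0$, and $\alpha_i=1$ otherwise, so that in the second branch the A-stationarity disjunction forces $\bar\nu_i\geq0$, as \astat\alpha-stationarity requires. With that one-line adjustment your proof is complete and coincides with the paper's (unwritten) argument.
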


\subsection{Auxiliary optimization problems}

If $\bar x$ is a feasible point of \eqref{eq:mpcc} and
$\alpha\in\set{0,1}^p$,
then we introduce the auxiliary nonlinear optimization problem
\begin{equation*}
	\labelparamtag{eq:nlp_mpcc}{\text{NLP}}{\alpha}
	\begin{minproblem}[x\in\R^n]{f(x)}
		g(x)&\leq 0,
		& h(x)&=0,
		\\
		G_i(x) &\geq0, & H_i(x) &= 0
		\qquad\forall i\in 
		I^{+0}(\bar x)\cup I_{\alpha=0}^{00}(\bar x),
		\\
		G_i(x) &=0, & H_i(x) &\geq 0
		\qquad\forall i\in 
		I^{0+}(\bar x)\cup I_{\alpha=1}^{00}(\bar x).
	\end{minproblem}
\end{equation*}
Note that the feasible set of this auxiliary problem
is a subset of the feasible set of \eqref{eq:mpcc}.
This optimization problem can also be found in
\cite[Section~2]{PangFukushima1999}.
% Let us also write down the KKT conditions at $\bar x$
% for this problem, with multipliers
% $\bar\lambda\in\R^l$, $\bar\eta\in\R^m$,
% $\bar\mu,\bar\nu\in\R^p$.
% \begin{equation*}
% 	\begin{aligned}
% 		&&&&\mathllap{
% 			\nabla f(\bar x) 
% 			+ \sum_{i\in I^l} \bar\lambda_i\nabla g_i(\bar x)
% 			+ \sum_{i\in I^m} \bar\eta_i\nabla h_i(\bar x)
% 			- \sum_{i\in I^p} \bar\mu_i\nabla G_i(\bar x)
% 			- \sum_{i\in I^p} \bar\nu_i\nabla H_i(\bar x)
% 		}
% 		&=0, \\
% 		&\hspace{18em}&&\forall i\in I^g(\bar x):
% 		&
% 		\bar\lambda_i &\geq0,
% 		\\
% 		&&&\forall i\in I^l\setminus I^g(\bar x):
% 		&
% 		\bar\lambda_i &=0,
% 		\\
% 		&&&\forall i\in I^{+0}(\bar x):
% 		& \bar\mu_i  &=0,
% 		\\
% 		&&&\forall i\in I_{\alpha=0}^{00}(\bar x):
% 		& \bar\mu_i  &\geq0,
% 		\\
% 		&&&\forall i\in I_{\alpha=1}^{00}(\bar x):
% 		& \bar\nu_i  &\geq0,
% 		\\
% 		&&&\forall i\in I^{0+}(\bar x):
% 		& \bar\nu_i  &=0,
% 	\end{aligned}
% \end{equation*}
% 

Similarly,
if $\bar x$ is a feasible point of \eqref{eq:mpvc},
then we introduce the auxiliary nonlinear optimization problem
\begin{equation*}
	\labelparamtag{eq:nlp_mpvc}{\text{NLP}}{\alpha}
	\begin{minproblem}[x\in\R^n]{f(x)}
		g(x)&\leq 0, & h(x)&=0,
		\\
		&& H_i(x) &= 0
		\qquad\forall i\in 
		I^{+0}(\bar x)\cup I_{\alpha=0}^{00}(\bar x),
		\\
		G_i(x) &\leq 0, &H_i(x) &\geq0
		\qquad\forall i\in 
		I^{-0}(\bar x)\cup I^{-+}(\bar x)\cup I^{0+}(\bar x)
		\cup I_{\alpha=1}^{00}(\bar x).
	\end{minproblem}
\end{equation*}
This optimization problem
can also be found in \cite[(7)]{HoheiselKanzow2008}.

Note that these auxiliary problems depend on $\bar x$
(in both the MPCC and the MPVC case).
It will be clear from context, whether the MPCC-version
of \paramref{eq:nlp_mpcc}{\alpha} or the MPVC-version
of \paramref{eq:nlp_mpvc}{\alpha} is meant.

In both cases we denote the tangent cone 
at $\bar x$ to the feasible set of
\paramref{eq:nlp_mpcc}{\alpha}
by $\tnlp\alpha(\bar x)$,
and the standard linearization cone by $\tnlplin\alpha(\bar x)$.
Note that $\tnlplin\alpha(\bar x)$ is a convex cone,
whereas $\tmpcclin(\bar x)$ and $\tmpvclin(\bar x)$
are not always convex.
Recall that the \emph{Guignard constraint qualification}
or \emph{GCQ} is satisfied at $\bar x$ for
\paramref{eq:nlp_mpcc}{\alpha}
if $\tnlp\alpha(\bar x)\polar=\tnlplin\alpha(\bar x)\polar$.

Finally, we state a simple lemma with well-known facts
from the basic theory of nonlinear programming.
\begin{lemma}
	\label{lem:basic_nlp_theory}
	\begin{enumerate}
		\item
			\label{item:b_stat}
			If $\bar x$ is a local minimizer of \eqref{eq:mpcc},
			then $-\nabla f(\bar x)\in\tmpcc(\bar x)\polar$ holds.
			If $\bar x$ is a local minimizer of \eqref{eq:mpvc},
			then $-\nabla f(\bar x)\in\tmpvc(\bar x)\polar$ holds.
		\item
			\label{item:lin_implies_kkt}
			For some $\alpha\in\set{0,1}^p$ and feasible $\bar x$,
			$-\nabla f(\bar x)\in\tnlplin{\alpha}(\bar x)\polar$
			holds if and only if
			there exist multipliers
			$(\eta^\alpha,\lambda^\alpha,\mu^\alpha,\nu^\alpha)$
			that satisfy the KKT conditions
			of \paramref{eq:nlp_mpcc}{\alpha},
			which is the same as the system for \astat\alpha-stationarity
			(this holds for both of the MPCC and MPVC version of 
			\paramref{eq:nlp_mpcc}{\alpha}).
	\end{enumerate}
\end{lemma}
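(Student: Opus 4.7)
The plan is to handle the two parts separately; both are standard results from nonlinear programming, so the emphasis will be on the correct identification of constraint types and sign conventions rather than on any deep geometric argument.

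For part~\ref{item:b_stat}, I would argue directly from the definition of the tangent cone. Let $d \in \tmpcc(\bar x)$, and pick sequences $\{x_k\} \subset F$ (with $F$ the feasible set of \eqref{eq:mpcc}) and $\{t_k\} \subset (0,\infty)$ with $x_k \to \bar x$, $t_k \downto 0$, and $t_k^{-1}(x_k - \bar x) \to d$. Since $\bar x$ is a local minimizer and $x_k \to \bar x$, we have $f(x_k) \geq f(\bar x)$ for all sufficiently large $k$. A first-order Taylor expansion of $f$ at $\bar x$ gives
\begin{equation*}
0 \leq \frac{f(x_k) - f(\bar x)}{t_k} = \nabla f(\bar x)^\top \frac{x_k - \bar x}{t_k} + \frac{o(\norm{x_k - \bar x})}{t_k},
\end{equation*}
and passing to the limit yields $\nabla f(\bar x)^\top d \geq 0$, i.e.\ $-\nabla f(\bar x)^\top d \leq 0$. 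Since $d \in \tmpcc(\bar x)$ was arbitrary, this is exactly $-\nabla f(\bar x) \in \tmpcc(\bar x)\polar$. The identical argument with $\tmpvc(\bar x)$ in place of $\tmpcc(\bar x)$ proves the MPVC case.

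For part~\ref{item:lin_implies_kkt}, the key observation is that $\tnlplin\alpha(\bar x)$ is a polyhedral cone, defined by finitely many linear inequalities and equalities involving the gradients of the active constraints of \paramref{eq:nlp_mpcc}{\alpha}. By the Farkas lemma, $-\nabla f(\bar x) \in \tnlplin\alpha(\bar x)\polar$ is equivalent to $-\nabla f(\bar x)$ being expressible as a nonnegative combination of the gradients of active inequality constraints plus a free combination of the gradients of equality constraints. Unpacking this representation in the MPCC case gives multipliers $\bar\lambda_i \geq 0$ for $i \in I^g(\bar x)$, $\bar\eta_i$ free for $h_i$, $\bar\mu_i \geq 0$ for the constraints $G_i(x) \geq 0$ with $i \in I^{+0}(\bar x) \cup I_{\alpha=0}^{00}(\bar x)$, $\bar\mu_i$ free for the equality $G_i(x) = 0$ with $i \in I^{0+}(\bar x) \cup I_{\alpha=1}^{00}(\bar x)$, and symmetrically for $\bar\nu_i$. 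Setting the remaining components to zero yields exactly the weak stationarity system together with $\bar\mu_i \geq 0$ on $I_{\alpha=0}^{00}(\bar x)$ and $\bar\nu_i \geq 0$ on $I_{\alpha=1}^{00}(\bar x)$, which is the \astat{\alpha}-stationarity system. The converse direction is immediate: given such multipliers, any $d \in \tnlplin\alpha(\bar x)$ satisfies $\nabla f(\bar x)^\top d \geq 0$ by directly taking inner product of the stationarity equation with $d$ and using the sign conditions on the multipliers.

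The main obstacle, such as it is, is the bookkeeping in the MPVC version of part~\ref{item:lin_implies_kkt}: the constraint $H_i(x) \geq 0$ for $i \in I^{-0}(\bar x)$ in \paramref{eq:nlp_mpvc}{\alpha} produces a multiplier $\bar\nu_i \geq 0$ (matching the weak MPVC-stationarity requirement), and the constraint $G_i(x) \leq 0$ for $i \in I^{-+}(\bar x) \cup I^{0+}(\bar x)$ with $H_i(\bar x) > 0$ is inactive, so its multiplier vanishes; one must then verify that the resulting sign pattern coincides with the weak MPVC-stationarity conditions together with the \astat\alpha-conditions on $I^{00}(\bar x)$. This is a straightforward case check once the active-set structure and the sign convention (note the plus sign on $\bar\mu_i$ in the MPVC stationarity equation) are kept in mind.
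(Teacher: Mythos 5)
Your overall route coincides with the paper's own (sketched) proof: part~\ref{item:b_stat} directly from the definitions of the tangent and polar cone via a first-order expansion along the defining sequences, and part~\ref{item:lin_implies_kkt} by computing $\tnlplin{\alpha}(\bar x)\polar$ with Farkas' lemma and matching the resulting multiplier system with \astat\alpha-stationarity. Part~\ref{item:b_stat} is correct, and the MPCC half of part~\ref{item:lin_implies_kkt} is correct up to a harmless imprecision: the Farkas representation involves only gradients of \emph{active} constraints, so for $i\in I^{+0}(\bar x)$ the multiplier of $G_i(x)\geq0$ must be set to zero (weak stationarity demands $\bar\mu_i=0$ there, not merely $\bar\mu_i\geq0$); your phrase ``setting the remaining components to zero'' can be read charitably as doing exactly this.

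The MPVC case check, which you yourself identify as the main content, contains a genuine error. You assert that the constraint $G_i(x)\leq0$ is inactive for $i\in I^{-+}(\bar x)\cup I^{0+}(\bar x)$ and that its multiplier therefore vanishes. This is true on $I^{-+}(\bar x)$, where $G_i(\bar x)<0$, but false on $I^{0+}(\bar x)$: there $G_i(\bar x)=0$, so $G_i(x)\leq0$ is \emph{active}; the inactive constraint at those indices is $H_i(x)\geq0$ (since $H_i(\bar x)>0$), and it is $\bar\nu_i$ that vanishes. The active constraint contributes $\nabla G_i(\bar x)$ as a generator of $\tnlplin{\alpha}(\bar x)\polar$ with a nonnegative multiplier, and this is precisely what is needed to match the weak MPVC-stationarity requirements $\bar\mu_i\geq0$ on $I^{0+}(\bar x)\cup I^{00}(\bar x)$ and $\bar\nu_i=0$ on $I^{0+}(\bar x)\cup I^{-+}(\bar x)$. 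If you instead force $\bar\mu_i=0$ on $I^{0+}(\bar x)$, the system you arrive at is strictly stronger than \astat\alpha-stationarity, and the claimed equivalence breaks: since $\nabla G_i(\bar x)$, $i\in I^{0+}(\bar x)$, is a generator of the polar cone, there exist vectors $-\nabla f(\bar x)\in\tnlplin{\alpha}(\bar x)\polar$ every representation of which requires $\bar\mu_i>0$ for some $i\in I^{0+}(\bar x)$. The repair is a one-line correction of the active-set determination: on $I^{0+}(\bar x)$ one has $\bar\mu_i\geq0$ and $\bar\nu_i=0$; on $I^{-+}(\bar x)$ one has $\bar\mu_i=\bar\nu_i=0$; on $I^{-0}(\bar x)$ one has $\bar\mu_i=0$ and $\bar\nu_i\geq0$; with this, the case check does produce the \astat\alpha-system as claimed.
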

\begin{proof}
	Part~\ref{item:b_stat} can be shown using the definition
	of the tangent cone and polar cone,
	and part~\ref{item:lin_implies_kkt}
	can be shown by calculating $\tnlplin{\alpha}(\bar x)\polar$,
	e.g.\ using Farkas' Lemma.
	That the KKT system
	of \paramref{eq:nlp_mpcc}{\alpha}
	is the same as the system for \astat\alpha-stationarity
	can be seen by writing down both systems.
	For more detailed proofs, we refer to the standard literature.
\end{proof}

\section{New stationarity systems between M- and S-stationarity}
\label{sec:between_mpcc}

Let us start by stating that 
linearized B-stationarity and
\astat\alpha-stationarity
are indeed a stationarity condition under MPCC-GCQ.
The result can (partially) be obtained from the proof of
\cite[Theorem~3.4]{FlegelKanzow2005:3},
and \astat\alpha-stationarity was also shown in
\cite[Proposition~3.1]{Harder2020}.
The equivalence was also mentioned in
\cite[Section~2.1]{ScheelScholtes2000}.
We include a proof for convenience.
\begin{proposition}
	\label{prop:astat}
	Let $\bar x\in\R^n$ be a feasible point of \eqref{eq:mpcc}.
	\begin{enumerate}
		\item
			\label{item:bstat_mpcc}
			If $\bar x$ is a local minimizer of \eqref{eq:mpcc}
			that satisfies MPCC-GCQ, then $\bar x$ is
			linearized B-stationary.
		\item
			\label{item:bstat_iff_astat}
			The point $\bar x$ is linearized B-stationary if and only if it is
			\astat\alpha-stationary for all $\alpha\in\set{0,1}^p$.
	\end{enumerate}
	In particular, if $\bar x$ is a local minimizer of \eqref{eq:mpcc}
	that satisfies MPCC-GCQ,
	then it is \astat\alpha-stationary
	for all $\alpha\in\set{0,1}^p$.
\end{proposition}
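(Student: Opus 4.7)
The plan is to handle the two parts independently, with part~\ref{item:bstat_mpcc} being essentially immediate and part~\ref{item:bstat_iff_astat} reducing to a clean set identity between the MPCC-linearized tangent cone and the linearization cones of the auxiliary problems.

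For part~\ref{item:bstat_mpcc}, I would simply chain together the three facts already at hand. A local minimizer of \eqref{eq:mpcc} satisfies $-\nabla f(\bar x)\in\tmpcc(\bar x)\polar$ by \cref{lem:basic_nlp_theory}~\ref{item:b_stat}, and MPCC-GCQ in \cref{def:mpcc_gcq}~\ref{item:mpcc_gcq} says exactly that $\tmpcc(\bar x)\polar=\tmpcclin(\bar x)\polar$. Hence $-\nabla f(\bar x)\in\tmpcclin(\bar x)\polar$, which is the definition of linearized B-stationarity in \cref{def:stat}~\ref{item:def:linbstat}.

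For part~\ref{item:bstat_iff_astat}, the key observation I would establish is the identity
\begin{equation*}
	\tmpcclin(\bar x)=\bigcup_{\alpha\in\set{0,1}^p}\tnlplin{\alpha}(\bar x).
\end{equation*}
Comparing the descriptions, every vector $d\in\tnlplin{\alpha}(\bar x)$ satisfies, for $i\in I^{00}(\bar x)$, either $\nabla H_i(\bar x)^\top d=0$ (when $\alpha_i=0$) or $\nabla G_i(\bar x)^\top d=0$ (when $\alpha_i=1$), together with the nonnegativity of whichever of the two gradients remains, so $d\in\tmpcclin(\bar x)$. Conversely, given $d\in\tmpcclin(\bar x)$, the complementarity relation $(\nabla G_i(\bar x)^\top d)(\nabla H_i(\bar x)^\top d)=0$ on $I^{00}(\bar x)$ lets me pick $\alpha_i\in\set{0,1}$ pointing at the coordinate that vanishes, and the resulting $\alpha$ puts $d$ into $\tnlplin{\alpha}(\bar x)$. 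For the other index sets the equalities and inequalities match directly.

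Taking polars and using that the polar of a union is the intersection of the polars, I obtain
\begin{equation*}
	\tmpcclin(\bar x)\polar=\bigcap_{\alpha\in\set{0,1}^p}\tnlplin{\alpha}(\bar x)\polar.
\end{equation*}
Therefore $-\nabla f(\bar x)\in\tmpcclin(\bar x)\polar$ if and only if $-\nabla f(\bar x)\in\tnlplin{\alpha}(\bar x)\polar$ for every $\alpha$, and \cref{lem:basic_nlp_theory}~\ref{item:lin_implies_kkt} translates each of those membership conditions into \astat\alpha-stationarity. The ``in particular'' conclusion then follows by combining parts~\ref{item:bstat_mpcc} and~\ref{item:bstat_iff_astat}. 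The only delicate point to get right is the bookkeeping in the set identity above, in particular checking that the product condition on $I^{00}(\bar x)$ is genuinely equivalent to the disjunction produced by varying $\alpha$; once that is verified, the rest is bookkeeping with polar cones.
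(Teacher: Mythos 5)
Your proposal is correct and follows essentially the same route as the paper's own proof: part one chains \cref{lem:basic_nlp_theory}~\ref{item:b_stat} with MPCC-GCQ, and part two rests on the identity $\tmpcclin(\bar x)=\bigcup_{\alpha\in\set{0,1}^p}\tnlplin{\alpha}(\bar x)$, followed by polarization and \cref{lem:basic_nlp_theory}~\ref{item:lin_implies_kkt}. The only difference is that you sketch the direct verification of this identity (correctly, via the complementarity condition on $I^{00}(\bar x)$), whereas the paper cites it as a known calculation.
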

\begin{proof}
% 	From the definition of $\tmpcc(\bar x)$
% 	and the fact that $\bar x$ is a local minimizer of
% 	\eqref{eq:mpcc} it can be concluded that the condition
% 	\begin{equation}
% 		\label{eq:bstat}
% 		\nabla f(\bar x)^\top d\geq0
% 		\qquad\forall\, d\in\tmpcc(\bar x)
% 	\end{equation}
% 	is satisfied.
% 	Thus, we have
% 	$-\nabla f(\bar x)\in \tmpcc(\bar x)\polar
% 	=\tmpcclin(\bar x)\polar$, 
% 	which shows linearized B-stationarity of $\bar x$.
	For part~\ref{item:bstat_mpcc}, we obtain
	$-\nabla f(\bar x)\in \tmpcc(\bar x)\polar
	=\tmpcclin(\bar x)\polar$
	from \cref{lem:basic_nlp_theory}~\ref{item:b_stat} and MPCC-GCQ.
	Thus, $\bar x$ is linearized B-stationary.

	For part~\ref{item:bstat_iff_astat},
	we first observe the equality
	\begin{equation*}
		\tmpcclin(\bar x)
		=\bigcup_{\alpha\in\set{0,1}^p} \tnlplin{\alpha}(\bar x),
	\end{equation*}
	which can be shown by direct calculations
	or obtained from \cite[Lemma~3.1]{FlegelKanzow2005:3}.
	Therefore, linearized B-stationarity can be written as
	\begin{equation*}
		-\nabla f(\bar x)\in \tmpcclin(\bar x)\polar
		= \paren[\bigg]{ 
		\bigcup_{\alpha\in\set{0,1}^p} \tnlplin{\alpha}(\bar x)}\polar
		= \bigcap_{\alpha\in\set{0,1}^p}\tnlplin{\alpha}(\bar x)\polar.
	\end{equation*}
	Thus, $\bar x$ is linearized B-stationary if and only if
	$-\nabla f(\bar x)\in \tnlplin{\alpha}(\bar x)\polar$
	for all $\alpha\in\set{0,1}^p$.
	However, the latter condition is equivalent to
	\astat\alpha-stationarity of $\bar x$
	by \cref{lem:basic_nlp_theory}~\ref{item:lin_implies_kkt}.
\end{proof}
In \cite{Harder2020}, the idea was to consider
convex combinations of the multipliers corresponding to the
\astat\alpha-stationarity system.
Here, we will use the same idea, but aim for stronger results.
An important ingredient is the Poincaré--Miranda theorem
which is a generalization of the intermediate value theorem.
\begin{theorem}[Poincaré--Miranda Theorem]
	\label{thm:poincare_miranda}
	Let $h:[0,1]^p\to\R^p$ be a continuous function such that
	\begin{align*}
		h_i(y) &\leq0 \qquad\text{if}\; y_i=0,
		\\
		h_i(y) &\geq0 \qquad\text{if}\; y_i=1
	\end{align*}
	holds for all $i\in I^p$.
	Then there exists a point $\bar y\in [0,1]^p$ with $h(\bar y)=0$.
\end{theorem}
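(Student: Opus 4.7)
The plan is to derive the Poincaré--Miranda theorem from Brouwer's fixed-point theorem, using the equivalence mentioned just above the statement. This keeps the proof elementary modulo one invocation of a classical topological result.

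First I would construct a continuous self-map $F\colon[0,1]^p\to[0,1]^p$ whose fixed points are exactly zeros of $h$. A convenient choice is to take one ``descent step'' of unit length in the direction $-h(y)$ and project componentwise onto $[0,1]$, namely
\[
F_i(y) := \max\bigl(0,\min(1,y_i-h_i(y))\bigr) \qquad\text{for every } i\in I^p,\ y\in[0,1]^p.
\]
Continuity of $h$ transfers to $F$, and $F$ maps the convex compact set $[0,1]^p$ into itself by construction.

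Next I would apply Brouwer's fixed-point theorem to obtain some $\bar y\in[0,1]^p$ with $F(\bar y)=\bar y$. It then remains to verify coordinate-wise that $h(\bar y)=0$. Fix $i\in I^p$ and split into three cases for $\bar y_i$. If $\bar y_i\in(0,1)$, the identity $\bar y_i = \max(0,\min(1,\bar y_i-h_i(\bar y)))$ forces both clips to be inactive, so $\bar y_i = \bar y_i - h_i(\bar y)$ and hence $h_i(\bar y)=0$. If $\bar y_i=0$, the hypothesis yields $h_i(\bar y)\leq 0$, while $F_i(\bar y)=0$ requires $\min(1,-h_i(\bar y))\leq 0$ and therefore $h_i(\bar y)\geq 0$; combining the two gives $h_i(\bar y)=0$. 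The case $\bar y_i=1$ is symmetric, using the upper clip together with the other boundary hypothesis.

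The main obstacle is the appeal to Brouwer itself, which is the only nontrivial topological ingredient. Once it is granted, the sign conditions on the faces $\{y_i=0\}$ and $\{y_i=1\}$ are precisely what is needed to upgrade the one-sided inequality produced by each active clip into an equality, so the coordinate-wise case analysis is essentially automatic. Alternative routes via topological degree or a Knaster--Kuratowski--Mazurkiewicz argument would work equally well but carry the same topological cost.
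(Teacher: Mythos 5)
Your proof is correct, and it necessarily takes a different route from the paper, because the paper does not prove this theorem at all: it refers to \cite{Kulpa1997} for a proof and only remarks that the statement is equivalent to the Brouwer fixed-point theorem. Your argument makes one direction of that equivalence explicit. The clipped map $F_i(y)=\max\bigl(0,\min(1,y_i-h_i(y))\bigr)$ is continuous and maps $[0,1]^p$ into itself, so Brouwer yields a fixed point $\bar y$, and your case analysis is sound: for $\bar y_i\in(0,1)$ both clips are inactive, giving $h_i(\bar y)=0$ directly; for $\bar y_i=0$ the fixed-point identity $0=\max\bigl(0,\min(1,-h_i(\bar y))\bigr)$ forces $\min(1,-h_i(\bar y))\leq 0$, hence $h_i(\bar y)\geq 0$, which combined with the face hypothesis $h_i(\bar y)\leq 0$ gives equality; the case $\bar y_i=1$ is symmetric. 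As for what each approach buys: your derivation is short and self-contained modulo one invocation of Brouwer, which is exactly the dependence the paper acknowledges; the proof in \cite{Kulpa1997} is of independent interest because it does not presuppose Brouwer (and indeed Poincar\'e--Miranda can then be used to derive Brouwer, which is why the two are equivalent rather than one being strictly stronger). Since the paper only uses the theorem as a black box in its two combination lemmas, either justification is equally serviceable there.
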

We refer to \cite{Kulpa1997} for a proof of this theorem.
We mention that this theorem is equivalent to the
well-known Brouwer fixed-point theorem, see \cite{Miranda1940}.
\begin{lemma}
	\label{lem:a_implies_psi}
% 	Let $\set{\psi_i}_{i=1}^p$ be a family of continuous functions
% 	with $\psi_i:\R^2\to\R$ such that
% 	Let $\hat I\in\set{0,1}^p$ be an index set and
	Let $\bar x\in\R^n$ be a point and
	let $\psi:\R^p\times\R^p\to\R^p$ be a continuous function
	with the property
	\begin{equation}
		\label{eq:psi_sign_condition}
		\begin{aligned}
			\psi_i(a,b)&\leq0
			\quad\text{if}\;a_i\geq0
			\\
			\psi_i(a,b)&\geq0
			\quad\text{if}\;b_i\geq0
		\end{aligned}
		\qquad\forall\, a,b\in \R^p,\, i\in I^{00}(\bar x).
	\end{equation}
	Furthermore, for all $\alpha\in\set{0,1}^p$,
	let points $(\mu^\alpha,\nu^\alpha)\in A^\alpha$ be given, 
	where the set $A^\alpha$ is described via
	\begin{equation}
		\label{eq:def_set_A_alpha}
		A^\alpha :=
		\set{(\mu,\nu)\in\R^{2p}\given 
			\mu_i\geq0\;\forall i\in I_{\alpha=0}^{00}(\bar x),\;
			\nu_i\geq0\;\forall i\in I_{\alpha=1}^{00}(\bar x)
		}.
	\end{equation}
	Then there exists a point $(\bar\mu,\bar\nu)$
	in the convex hull of 
	$\set{(\mu^\alpha,\nu^\alpha)\given \alpha\in\set{0,1}^p}$
	with
	\begin{equation*}
		\psi_i(\bar\mu,\bar\nu)=0
		\qquad\forall i\in  I^{00}(\bar x).
	\end{equation*}
\end{lemma}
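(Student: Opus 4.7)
The plan is to reduce the claim to an application of the Poincaré--Miranda theorem (\cref{thm:poincare_miranda}) on the cube $[0,1]^{I^{00}(\bar x)}$ of dimension $q := |I^{00}(\bar x)|$, by parameterizing a convex combination of the $2^p$ given multipliers so that the sign conditions defining the sets $A^\alpha$ line up with the faces of this cube. The zero of the resulting map will directly yield the desired point $(\bar\mu,\bar\nu)$.

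Concretely, for $y \in [0,1]^{I^{00}(\bar x)}$ and $\alpha \in \set{0,1}^p$ I would introduce the Bernstein-type weight
\begin{equation*}
	w_\alpha(y) := 2^{q-p} \prod_{i \in I^{00}(\bar x)} y_i^{\alpha_i}(1-y_i)^{1-\alpha_i},
\end{equation*}
with the convention $0^0 = 1$. A short calculation (grouping $\alpha$ by its restriction to $I^{00}(\bar x)$ and using $y_i + (1-y_i) = 1$) shows $\sum_{\alpha \in \set{0,1}^p} w_\alpha(y) \equiv 1$, and clearly $w_\alpha(y) \geq 0$, so
\begin{equation*}
	(\mu(y), \nu(y)) := \sum_{\alpha \in \set{0,1}^p} w_\alpha(y)\,(\mu^\alpha, \nu^\alpha)
\end{equation*}
depends continuously on $y$ and always lies in the convex hull of $\set{(\mu^\alpha, \nu^\alpha) \given \alpha \in \set{0,1}^p}$.

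The next step is to verify the boundary sign conditions of \cref{thm:poincare_miranda} for the continuous function $h \colon [0,1]^{I^{00}(\bar x)} \to \R^{I^{00}(\bar x)}$ defined by $h_i(y) := \psi_i(\mu(y), \nu(y))$. Fix $i \in I^{00}(\bar x)$. When $y_i = 0$, the factor $y_i^{\alpha_i}(1-y_i)^{1-\alpha_i}$ vanishes for every $\alpha$ with $\alpha_i = 1$, so only multipliers indexed by $\alpha$ with $i \in I_{\alpha=0}^{00}(\bar x)$ contribute to the sum. By the definition of $A^\alpha$ in \eqref{eq:def_set_A_alpha} these satisfy $\mu^\alpha_i \geq 0$, hence $\mu(y)_i \geq 0$ as a nonnegative combination; the hypothesis \eqref{eq:psi_sign_condition} then gives $h_i(y) \leq 0$. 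The mirror argument at $y_i = 1$ shows $\nu(y)_i \geq 0$ and therefore $h_i(y) \geq 0$.

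With these boundary conditions established, \cref{thm:poincare_miranda} applied to $h$ produces $\bar y \in [0,1]^{I^{00}(\bar x)}$ with $h(\bar y) = 0$, and $(\bar\mu, \bar\nu) := (\mu(\bar y), \nu(\bar y))$ is the required convex-combination point. The only conceptually delicate part of the argument is the design of the weights $w_\alpha$: one has to arrange the product so that on each face $\set{y_i = 0}$ precisely those $\alpha$ for which $A^\alpha$ enforces $\mu^\alpha_i \geq 0$ survive, and symmetrically on $\set{y_i = 1}$, which is exactly what the Bernstein product delivers. Everything else is routine continuity and convex-combination bookkeeping.
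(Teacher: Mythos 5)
Your proposal is correct and follows essentially the same route as the paper's own proof: both parameterize convex combinations of the $(\mu^\alpha,\nu^\alpha)$ by multilinear Bernstein-type weights on a cube, check that membership in $A^\alpha$ forces the required sign of $\psi_i$ on the faces $\set{y_i=0}$ and $\set{y_i=1}$, and conclude with the Poincaré--Miranda theorem. The only (cosmetic) differences are that you work on the cube indexed by $I^{00}(\bar x)$ with the compensating factor $2^{q-p}$, whereas the paper uses the full cube $[0,1]^p$ (padding $h_i:=0$ for $i\notin I^{00}(\bar x)$) and normalizes its weights explicitly; your construction is exactly the paper's restricted to the slice $y_i=1/2$ for $i\notin I^{00}(\bar x)$.
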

\begin{proof}
	Let us define the function
	\begin{equation*}
		\hat g:[0,1]^p\times \set{0,1}^p\to\nonneg,
		\quad
		(y,\alpha)\mapsto
		\paren[\Bigg]{\prod_{i\in I^p,\,\alpha_i=1}y_i}
		\cdot\paren[\Bigg]{\prod_{i\in I^p,\,\alpha_i=0}(1-y_i)}.
% 		\cdot\prod_{i\in I^p,\,\alpha_i=0}(1-y_i).
	\end{equation*}
	Note that for each $y\in[0,1]^p$
	there exists some $\beta\in\set{0,1}^p$ with $\hat g(y,\beta)>0$.
	Therefore, the normalized function
	\begin{equation*}
		g:[0,1]^p\times \set{0,1}^p\to[0,1],
		\quad
		(y,\alpha)\mapsto
		\frac{\hat g(y,\alpha)}{
		\sum_{\beta\in\set{0,1}^p}\hat g(y,\beta)}
	\end{equation*}
	is well-defined and has the property
	$\sum_{\alpha\in\set{0,1}^p}g(y,\alpha)=1$.
	Note that $\hat g(\cdot,\alpha)$ and $g(\cdot,\alpha)$ 
	are continuous for each $\alpha\in\set{0,1}^p$.
	Our next goal is to apply \cref{thm:poincare_miranda}
	to the function
	\begin{equation*}
		h:[0,1]^p\to\R^p,
		\quad
		y\mapsto 
		\begin{cases}
			\psi_i\paren[\Big]{
				\sum_{\alpha\in\set{0,1}^p}g(y,\alpha)(\mu^\alpha,\nu^\alpha)
			}
			&\text{if}\;i\in  I^{00}(\bar x)
			\\
			0 &\text{if}\;i\in I^p\setminus I^{00}(\bar x).
		\end{cases}
	\end{equation*}
	Clearly, $h$ is continuous.
	Let us verify that the required sign conditions for $h$ hold.
	For $i\in I^p\setminus  I^{00}(\bar x)$, we have $h_i(y)=0$
	and the assumption is satisfied.
	Let $i\in  I^{00}(\bar x)$ and $y\in [0,1]^p$ be given.
	We first consider the case that $y_i=0$ holds.
	For $\alpha\in\set{0,1}^p$ with $\alpha_i=1$ we have
	$\hat g(y,\alpha)=0$ and $g(y,\alpha)=0$.
	We obtain
	\begin{equation*}
		\sum_{\alpha\in\set{0,1}^p}
		g(y,\alpha)\mu^\alpha_i
		= \sum_{\alpha\in\set{0,1}^p, i\in I_{\alpha=0}^{00}(\bar x)} 
		g(y,\alpha)\mu^\alpha_i
		\geq0
	\end{equation*}
	from $(\mu^\alpha,\nu^\alpha)\in A^\alpha$.
	Together with \eqref{eq:psi_sign_condition} this implies
	\begin{equation*}
		h_i(y) 
		= \psi_i\paren[\Big]{\sum_{\alpha\in\set{0,1}^p}
		g(y,\alpha)(\mu^\alpha,\nu^\alpha)}
		\leq0.
	\end{equation*}
	The other case with $y_i=1$ works similarly:
	There, for $\alpha\in\set{0,1}^p$ with $\alpha_i=0$
	we have $\hat g(y,\alpha)=0$ and $g(y,\alpha)=0$.
	We obtain
	\begin{equation*}
		\sum_{\alpha\in\set{0,1}^p}
		g(y,\alpha)\nu^\alpha_i
		= \sum_{\alpha\in\set{0,1}^p, i\in I_{\alpha=1}^{00}(\bar x)} 
		g(y,\alpha)\nu^\alpha_i
		\geq0
	\end{equation*}
	from $(\mu^\alpha,\nu^\alpha)\in A^\alpha$.
	Together with \eqref{eq:psi_sign_condition} this implies
	\begin{equation*}
		h_i(y) 
		= \psi_i\paren[\Big]{\sum_{\alpha\in\set{0,1}^p}
		g(y,\alpha)(\mu^\alpha,\nu^\alpha)}
		\geq0.
	\end{equation*}
	Therefore, \cref{thm:poincare_miranda} can be applied,
	which yields a point $\bar y\in [0,1]^p$ with $h(\bar y)=0$.
	We define
	\begin{equation*}
		(\bar\mu,\bar\nu):=
		\sum_{\alpha\in\set{0,1}^p} g(\bar y,\alpha)(\mu^\alpha,\nu^\alpha).
	\end{equation*}
	Due to the properties of $g$, the point $(\bar\mu,\bar\nu)$
	is indeed a point 
	in the convex hull of 
	$\set{(\mu^\alpha,\nu^\alpha)\given \alpha\in\set{0,1}^p}$.
	For $i\in I^{00}(\bar x)$, we also obtain the remaining condition by
	\begin{align*}
		\psi_i(\bar\mu,\bar\nu)
		= \psi_i\paren[\Big]{\sum_{\alpha\in\set{0,1}^p}
		g(\bar y,\alpha)(\mu^\alpha,\nu^\alpha)}
		= h_i(\bar y) =0.
	\end{align*}
\end{proof}
We mention that any feasible function $\psi$ satisfies
$\psi_i(a,b)=0$ if $a_i\geq0\land b_i\geq0$, which is the area
that corresponds to the system of strong stationarity.
By choosing a suitable function $\psi$,
we can use the previous \lcnamecref{lem:a_implies_psi} to show that 
\pstat\alpha-stationarity is a first-order necessary optimality condition
under MPCC-GCQ.
\begin{theorem}
	\label{thm:between_m_and_s}
	Suppose $\bar x$ is an \astat\alpha-stationary point of \eqref{eq:mpcc}
	for all $\alpha\in\set{0,1}^p$.
	Then $\bar x$ is a \pstat\alpha-stationary point
	for all $\alpha\in\set{0,1}^p$.

	In particular, if $\bar x$ is a local minimizer of \eqref{eq:mpcc}
	that satisfies MPCC-GCQ,
	then it is \pstat\alpha-stationary
	for all $\alpha\in\set{0,1}^p$.
\end{theorem}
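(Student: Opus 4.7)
The plan is to apply \cref{lem:a_implies_psi} to a carefully designed continuous function $\psi$ whose zero set on $I^{00}(\bar x)$ matches exactly the \pstat{\hat\alpha}-condition for a fixed target $\hat\alpha\in\set{0,1}^p$. Once this is carried out for arbitrary $\hat\alpha$, the first assertion is proved, and the ``in particular'' statement follows immediately: by \cref{prop:astat}, a local minimizer satisfying MPCC-GCQ is \astat\alpha-stationary for every $\alpha$, which is precisely the hypothesis of the first assertion.

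To build $\psi$, I would fix $\hat\alpha\in\set{0,1}^p$ and define it component-wise by
\begin{equation*}
	\psi_i(\mu,\nu) :=
	\begin{cases}
		\max(-\mu_i,0)+\mu_i\min(\nu_i,0) & \text{if } i\in I^{00}(\bar x),\; \hat\alpha_i=0,\\
		-\max(-\nu_i,0)-\nu_i\min(\mu_i,0) & \text{if } i\in I^{00}(\bar x),\; \hat\alpha_i=1,\\
		0 & \text{otherwise}.
	\end{cases}
\end{equation*}
A routine case analysis shows that $\psi$ is continuous and satisfies the sign condition \eqref{eq:psi_sign_condition}: for $\hat\alpha_i=0$, the first summand vanishes as soon as $\mu_i\geq0$ (leaving $\mu_i\min(\nu_i,0)\leq0$), while the second summand vanishes as soon as $\nu_i\geq0$ (leaving $\max(-\mu_i,0)\geq0$); the $\hat\alpha_i=1$ case is symmetric with the roles of $\mu$ and $\nu$, and of the two inequalities, swapped. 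The same case analysis verifies that on $I^{00}(\bar x)$, $\psi_i(\mu,\nu)=0$ holds precisely when $(\mu_i,\nu_i)$ lies in the region depicted in \cref{item:fig:new_stat_a} (for $\hat\alpha_i=0$) or \cref{item:fig:new_stat_b} (for $\hat\alpha_i=1$), that is, exactly the \pstat{\hat\alpha}-region at the index~$i$.

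Invoking \cref{lem:a_implies_psi} with this $\psi$ and with the \astat\alpha-multipliers $(\mu^\alpha,\nu^\alpha)\in A^\alpha$ yields a point $(\bar\mu,\bar\nu)$ in the convex hull of $\set{(\mu^\alpha,\nu^\alpha)\given\alpha\in\set{0,1}^p}$ with $\psi_i(\bar\mu,\bar\nu)=0$ for every $i\in I^{00}(\bar x)$. Using the same weights $g(\bar y,\alpha)$ from the proof of that lemma, I define $\bar\lambda$, $\bar\eta$ as the matching convex combinations of $\lambda^\alpha$, $\eta^\alpha$. Because the gradient identity of W-stationarity is linear in the multipliers and because all the remaining conditions ($\bar\lambda_i\geq0$ on $I^g(\bar x)$, $\bar\lambda_i=0$ off $I^g(\bar x)$, $\bar\mu_i=0$ on $I^{+0}(\bar x)$, $\bar\nu_i=0$ on $I^{0+}(\bar x)$) are preserved under convex combinations, the resulting tuple $(\bar\lambda,\bar\eta,\bar\mu,\bar\nu)$ is W-stationary. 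Combined with the zero-set characterization of $\psi$, this produces \pstat{\hat\alpha}-stationarity, and since $\hat\alpha$ was arbitrary, the proof is complete.

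The main obstacle is the design of $\psi$ itself. The sign condition \eqref{eq:psi_sign_condition} is genuinely asymmetric (non-positive on $\set{\mu_i\geq0}$, non-negative on $\set{\nu_i\geq0}$), and its zero set is forced to contain the full quadrant $\set{\mu_i\geq0,\nu_i\geq0}$, so there is only a narrow window in which one can further cut the zero set down to the \pstat{\hat\alpha}-region. A naive single-term candidate such as $\mu_i\min(\nu_i,0)$ has the wrong zero set (it vanishes on all of $\set{\nu_i\geq0}$, irrespective of the sign of $\mu_i$), while flipping its sign breaks one of the two sign inequalities. The two-summand ansatz above is what makes the construction go through: each summand is responsible for one of the two sign inequalities and contributes exactly one ``new'' ray ($\set{\mu_i=0,\nu_i\leq0}$ in the $\hat\alpha_i=0$ case) to the zero set, which is precisely what the \pstat{\hat\alpha}-region demands. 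Everything beyond this is continuity and linear bookkeeping.
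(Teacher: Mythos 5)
Your proposal is correct and takes essentially the same approach as the paper: fix a target $\hat\alpha$, feed the \astat\alpha-multipliers into \cref{lem:a_implies_psi} with a componentwise function $\psi$ whose zero set on $I^{00}(\bar x)$ is exactly the \pstat{\hat\alpha}-region, recover weak stationarity by convexity of that system, and obtain the ``in particular'' part from \cref{prop:astat}. The only difference is the explicit formula for $\psi$ --- the paper uses $\max(-a_i,\min(0,b_i))$ and $\min(b_i,\max(0,-a_i))$ instead of your two-summand expressions --- but both choices satisfy \eqref{eq:psi_sign_condition} and have identical zero sets, so the arguments coincide.
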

\begin{proof}
	Let $\beta\in\set{0,1}^p$ be given.
	We will show that $\bar x$ is \pstat\beta-stationarity.
	For each $\alpha\in\set{0,1}^p$, let
	$(\lambda^\alpha,\eta^\alpha,\mu^\alpha,\nu^\alpha)$
	be multipliers which satisfy the system of \astat{\alpha}-stationarity.
	We want to apply \cref{lem:a_implies_psi}.
	By definition, we have $(\mu^\alpha,\nu^\alpha)\in A^\alpha$,
	where $A^\alpha$ is defined as in \eqref{eq:def_set_A_alpha}.
% 	(with $I^{00}(\bar x)$ instead of $\hat I$).
	We use the function $\psi:\R^p\times\R^p\to\R^p$ which
	is given by
	\begin{equation*}
		\psi_i(a,b):=
		\begin{cases}
			\max(-a_i,\min(0,b_i))
			&\text{if}\; \beta_i=0
			\\
			\min(b_i,\max(0,-a_i))
			&\text{if}\; \beta_i=1
		\end{cases}
	\end{equation*}
	for all $i\in I^p$, $a,b\in\R^p$.
	It can be checked that these functions are continuous and satisfy
	\eqref{eq:psi_sign_condition}.
	Thus, we can apply \cref{lem:a_implies_psi}
	and there exists a convex combination
	$(\bar\lambda,\bar\eta,\bar\mu,\bar\nu)$ of the multipliers
	$(\lambda^\alpha,\eta^\alpha,\mu^\alpha,\nu^\alpha)$
	such that $\psi_i(\bar\mu,\bar\nu)=0$ holds for all 
	$i\in I^{00}(\bar x)$.
	Let us check that $(\bar\mu,\bar\nu)$ satisfy the additional conditions
	for \pstat\beta-stationarity.
	For $i\in I_{\beta=0}^{00}(\bar x)$ we have
	\begin{align*}
		&0 = \psi_i(\bar\mu,\bar\nu)
		= \max(-\bar\mu_i,\min(0,\bar\nu_i))
		\\&\qquad \iff
		(-\bar\mu_i=0\land \min(0,\bar\nu_i)\leq0)
		\lor (\min(0,\bar\nu_i)=0\land -\bar\mu_i\leq0)
		\\ &\qquad \iff
		(\bar\mu_i \geq0 \land\bar\nu_i\geq0) \lor \bar\mu_i =0.
	\end{align*}
	Similarly, for $i\in I_{\beta=1}^{00}(\bar x)$ we have
	\begin{align*}
		&0 = \psi_i(\bar\mu,\bar\nu)
		= \min(\bar\nu_i,\max(0,-\bar\mu_i))
		\\&\qquad \iff
		(\bar\nu_i=0\land \max(0,-\bar\mu_i)\geq0)
		\lor (\max(0,-\bar\mu_i)=0\land\bar\nu_i\geq0)
		\\&\qquad \iff
		(\bar\mu_i \geq0 \land\bar\nu_i\geq0) \lor \bar\nu_i =0.
	\end{align*}
	It remains to show that
	$(\bar\lambda,\bar\eta,\bar\mu,\bar\nu)$
	satisfies the system of weak stationarity.
	This, however, is true due to the convex nature
	of the system of weak stationarity
	and because
	$(\lambda^\alpha,\eta^\alpha,\mu^\alpha,\nu^\alpha)$
	satisfies the system of weak stationarity for all $\alpha\in\set{0,1}^p$.

	Due to \cref{prop:astat}, the \pstat\beta-stationarity condition
	is also satisfied if $\bar x$ is a local minimizer of \eqref{eq:mpcc}
	that satisfies MPCC-GCQ.
\end{proof}
This result will be generalized in \cref{sec:weird_stat}
by considering other choices for $\psi$.
While the \pstat\alpha-stationarity also follows directly from
\cref{thm:weird_stat_mpcc}~\ref{item:cd_stat_mpcc},
we believe it is useful to also present this
simpler proof for the interesting case of stationarity
conditions between strong and M-stationarity.

Some equivalences involving \pstat\alpha-stationarity
are shown in \cref{cor:equiv_mpcc}.

\section{Other new stationarity conditions for MPCCs}
\label{sec:weird_stat}

We can generalize the approach in \cref{sec:between_mpcc}
to obtain more stationarity conditions under MPCC-GCQ.
However, these stationarity conditions do not necessarily
lie between strong and M-stationarity, but only between
strong and C-stationarity.
As a special case, we obtain \cstat{d}-stationarity of local
minimizers under MPCC-GCQ for all $d\in [0,1]^p$.
\begin{lemma}
	\label{lem:combine_astat_weird}
	For each $i\in I^p$,
	let $C_i\subset \nonpossq$ be a closed, connected and unbounded set
	with $0\in C_i$.
	Furthermore, let $\bar x\in\R^n$ be a point
	and for all $\alpha\in\set{0,1}^p$,
	let points $(\mu^\alpha,\nu^\alpha)\in A^\alpha$ be given, 
	where the set $A^\alpha$ is defined in
	\eqref{eq:def_set_A_alpha}.
	Then there exists a point $(\bar\mu,\bar\nu)$
	in the convex hull of 
	$\set{(\mu^\alpha,\nu^\alpha)\given \alpha\in\set{0,1}^p}$
	with
	\begin{equation*}
		(\bar\mu_i\geq0\land\bar\nu_i\geq0)
		\lor (\bar\mu_i,\bar\nu_i)\in C_i
		\qquad\forall i\in  I^{00}(\bar x).
	\end{equation*}
\end{lemma}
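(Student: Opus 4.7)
The plan is to apply \cref{lem:a_implies_psi} with a suitable continuous function $\psi\colon \R^p\times \R^p\to \R^p$. For each $i\in I^{00}(\bar x)$ I will construct $\psi_i$ depending only on the coordinate pair $(a_i,b_i)$ such that $\psi_i$ satisfies the sign condition \eqref{eq:psi_sign_condition} and
\[
\psi_i(a,b) = 0 \iff (a_i,b_i) \in \nonnegsq \cup C_i,
\]
and for $i\notin I^{00}(\bar x)$ I set $\psi_i\equiv 0$. Then \cref{lem:a_implies_psi} yields a convex combination $(\bar\mu,\bar\nu)$ with $\psi_i(\bar\mu,\bar\nu)=0$ for every $i\in I^{00}(\bar x)$, and the displayed equivalence immediately gives the desired inclusion $(\bar\mu_i,\bar\nu_i)\in \nonnegsq\cup C_i$.

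The construction of $\psi_i$ uses a signed distance to the closed set $K_i := \nonnegsq \cup C_i \subset \R^2$. The decisive topological ingredient is that $K_i$ \emph{separates} the open second quadrant $\set{a<0,\,b>0}$ from the open fourth quadrant $\set{a>0,\,b<0}$, i.e.\ these two open quadrants lie in distinct connected components $U$ and $V$ of $\R^2\setminus K_i$. Granting this, define
\[
\psi_i(a,b) := \sigma\bigl(a_i,b_i\bigr)\cdot \operatorname{dist}\bigl((a_i,b_i),\,K_i\bigr),
\]
where $\sigma$ equals $+1$ on $U$, $-1$ on $V$, $0$ on $K_i$, and is chosen constantly in $\set{-1,+1}$ on each remaining component of $\R^2\setminus K_i$; every such remaining component lies entirely in the open third quadrant, where neither inequality of \eqref{eq:psi_sign_condition} is active, so the sign condition is unaffected by the choice. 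Continuity of $\psi_i$ follows from $\operatorname{dist}(\cdot,K_i)\to 0$ as one approaches $K_i$, the sign conditions hold because $\set{a_i\geq 0}\setminus K_i\subseteq V$ and $\set{b_i\geq 0}\setminus K_i\subseteq U$, and the zero set of $\psi_i$ is exactly $K_i$.

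The main obstacle is the separation claim itself. My approach is to pass to the one-point compactification $S^2=\R^2\cup\set{\infty}$: since both $\nonnegsq$ and $C_i$ are unbounded and both contain the origin, the set $\hat K_i := K_i\cup\set{\infty}$ is a closed connected subset of $S^2$, and a Mayer--Vietoris computation with $A=\nonnegsq\cup\set{\infty}$ (a closed topological disk in $S^2$, hence acyclic) and $B=C_i\cup\set{\infty}$, intersecting in the two-point set $\set{0,\infty}$, forces $\check H^1(\hat K_i)\neq 0$. Alexander duality in $S^2$ then yields $\tilde H_0(\R^2\setminus K_i)\neq 0$, i.e.\ at least two components, and inspecting the Jordan curve in $S^2$ formed by the positive $a$-axis, the point $\infty$ and the positive $b$-axis identifies the open second and fourth quadrants as lying on opposite sides of the barrier contributed by $C_i$, completing the verification.
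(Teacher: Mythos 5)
Your reduction to \cref{lem:a_implies_psi} via a componentwise signed distance to $K_i=\nonnegsq\cup C_i$ is exactly the route the paper takes, and that part of your write-up is sound: the inclusions $\set{a_i\geq0}\setminus K_i\subseteq V$ and $\set{b_i\geq0}\setminus K_i\subseteq U$, the continuity of the signed distance, and the observation that all remaining complementary components lie in the open third quadrant are all correct --- \emph{provided} $U\neq V$. The genuine gap is in your proof of that separation claim. Your Mayer--Vietoris/Alexander-duality computation proves only that $\R^2\setminus K_i$ is disconnected; it does not identify \emph{which} points are separated. This distinction matters here, because $C_i$ may have holes: take, for instance, $C_i$ to be a circle in the open third quadrant joined to the origin by a segment and to infinity by a ray. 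Then $\R^2\setminus K_i$ has bounded components inside the third quadrant, so a nonzero class in $\tilde H_0(\R^2\setminus K_i)$ is a priori compatible with the second and fourth quadrants lying in one and the same component. Your closing step cannot repair this: the Jordan curve made of the two nonnegative half-axes, $0$ and $\infty$ bounds the first quadrant on one side, and the open second and fourth quadrants both lie in its \emph{other} complementary domain, so that curve separates neither of them from the other; the separation must come from $C_i$, and that is precisely the nontrivial claim left unproved.

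The fix stays entirely within your framework but must be run as a contradiction argument that first absorbs the holes. Assume the open second and fourth quadrants lie in one component $W$ of $S^2\setminus(K_i\cup\set{\infty})$, and set $\tilde K:=S^2\setminus W$, i.e.\ $K_i\cup\set{\infty}$ together with all other complementary components. Those extra components lie in the open third quadrant and their boundaries lie in $C_i\cup\set{\infty}$, so $\tilde K=A\cup B'$ with $A=\nonnegsq\cup\set{\infty}$ and $B':=\tilde K\cap(\nonpossq\cup\set{\infty})$ closed and connected, and still $A\cap B'=\set{0,\infty}$. Your Mayer--Vietoris and duality computation, applied now to $\tilde K$, shows that $S^2\setminus\tilde K=W$ is disconnected --- contradicting that $W$ is a connected component. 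With this modification your argument becomes essentially the paper's alternative proof (present in the source but not printed), which applies a separation theorem of Munkres on $S^2$ to the same two-point-intersection configuration after the same absorption of holes. The paper's printed proof reaches $U\neq V$ by more elementary means: assuming $U=V$, it takes a path from $(-1,1)$ to $(1,-1)$ in that component and a path from far inside $C_i$ to far inside $\nonnegsq$ staying in a small neighborhood of $K_i$, and uses the Poincaré--Miranda theorem to force the two paths to intersect, a contradiction; this avoids algebraic topology altogether and reuses a tool already needed for \cref{lem:a_implies_psi}.
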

\begin{proof}
	Let $i\in  I^{00}(\bar x)$ be fixed.
	We define $\hat C:=C_i\cup \nonnegpow{2}\subset \R^2$,
	which is again a closed and connected set.

	Let $O_1,O_2\subset \R^2$ be the connected components of 
	$\R^2\setminus \hat C$
	with the properties $(-1,1)\in O_1$ and $(1,-1)\in O_2$.
	The sets $O_1$, $O_2$ are open because $\R^2$ is a 
	locally connected space and 
	connected components in a locally connected space are open.
	In order to apply \cref{lem:a_implies_psi},
	we want to construct functions $\psi_i:\R^2\to\R$
	that are $0$ only on $\hat C$ and satisfy some sign conditions.
	This requires that $O_1$ and $O_2$ are different
	connected components.
	We will show that the open sets
	$O_1$ and $O_2$ are different connected components.
	Suppose by contradiction that $O_1=O_2$.
% new proof attempt using Munkres
	\ifmunkres
		We define the closed set $\tilde C:=\R^2\setminus O_1$.
		We claim that $\tilde C$ is again a connected set.
		Let $D_1$ be the connected component of $\tilde C$
		that includes $\hat C$.
		We define $D_2:=\tilde C\setminus D_1$.
		Because $D_1$ is relatively open with respect to $\tilde C$,
		the set $D_2$ is closed (with respect to both $\tilde C$ and $\R^2$).
		Because $O_1$ is relatively closed with respect to 
		$\R^2\setminus\hat C$,
		the set $\R^2\setminus (\hat C\cup O_1)$ is open,
		and therefore the set 
		$D_2=(\R^2\setminus(\hat C\cup O_1))\setminus D_1$
		is open in $\R^2$.
		Since $D_2$ is both open and closed, we have $D_2=\emptyset$,
		and therefore $\tilde C=D_1$ is connected.
		We define the sets $\tilde C^+:=\tilde C\cap \nonnegpow{2}$
		and $\tilde C^-:=\tilde C\cap\nonpospow{2}$.
		Note that $\tilde C=\tilde C^+\cup \tilde C^-$
		and that $\tilde C^+$, $\tilde C^-$ are both 
		closed, connected, and unbounded sets.

		Next, let $S^2\subset\R^3$ denote the unit sphere,
		let $\bar n\in S^2$ be a point, and let 
		$h:\R^2\to S^2\setminus\set{\bar n}$
		denote a homeomorphism.
		Because $\tilde C^+$, $\tilde C^-$ are unbounded,
		the sets $h(\tilde C^+)\cup\set{\bar n}$
		and $h(\tilde C^-)\cup\set{\bar n}$
		are closed and connected.
		Further, note that the intersection of these sets consists
		precisely of the two (distinct) points $\bar n$ and $h(0)$.
		We apply \cite[Theorem~61.4]{Munkres2000},
		which states that the complement $h(O_1)$
		of $S^2\setminus (h(\tilde C^+)\cup h(\tilde C^-)\cup \set{\bar n})$
		is not connected.
		However, since $h$ is a homeomorphism, this is a contradiction
		to the connectedness of $O_1$.
		Hence, our assumption $O_1=O_2$ was wrong and they
		are different connected components.
	\else
% path based proof variant:
		Since $O_1$ is an open and connected set, it is also path-connected.
		Thus, there exists a path from $(-1,1)$ to $(1,-1)$ in $O_1$.
		Let $K_1$ denote the image of the path.
		Since $\hat C$ is closed and $K_1$ is compact,
		there exists a minimum distance $d>0$ of $K_1$ from $\hat C$.
		We define the open set 
		$O_{\hat C}:=\set{y\in\R^2\given \dist(y,\hat C)<d/2}$.
		This set is also connected:
		a nontrivial connected component $G$ of $O_{\hat C}$
		would lead to a nontrivial connected component $G\cap \hat C$
		of $\hat C$.

		Since $C_i$ is unbounded, there exists a point
		$\hat x\in C_i$ with $\hat x_1\le y_1\;\forall y\in K_1$ or
		$\hat x_2\le y_2\;\forall y\in K_1$.
		Without loss of generality we assume that
		$\hat x_1\le y_1\;\forall y\in K_1$ holds
		(otherwise one could just exchange coordinates for 
		the rest of the proof that $O_1\ne O_2$).
		Likewise, there exists a point 
		$(\hat z_1,0)\in \nonnegsq\subset\hat C$
		with $\hat z_1 \ge y_1\;\forall y\in K_1$.
		Since $O_{\hat C}$ is open and connected, we can find a path
		$\gamma^2:[0,1]\to O_{\hat C}$ with
		$\gamma^2(0)=\hat x$ and $\gamma^2(1)=(\hat z_1,0)$.
		We also define 
		$\gamma^1(0):=(-1,2+\sup\set{\gamma^2_2(a)\given a\in[0,1]}) \in O_1$
		and $\gamma^1(1):=
		(1,\inf\set{\gamma^2_2(a)\given a\in[0,1]}-2)\in O_2=O_1$,
		and choose a path $\gamma^1:[0,1]\to O_1$
		that goes from $\gamma^1(0)$ to $(-1,1)$ in a straight line,
		then continuous through $K_1$ to the point $(1,-1)$,
		and goes to $\gamma^1(1)$ in a straight line from there.
		Note that the minimum distance of $\gamma^1([0,1])$ to $\hat C$
		is still equal to $d>0$.

		We define the continuous function $h:[0,1]^2\to\R^2$ via
		$h(y)=\gamma^2(y_1)-\gamma^1(y_2)$.
		Then the conditions of \cref{thm:poincare_miranda}
		are true due to 
		$\gamma^2_1(0)=\hat x_1\leq \gamma^1_1(a)\leq \hat z_1=\gamma^2_1(1)$
		and
		$\gamma^1_2(1)\leq \gamma^2_2(a)\leq \gamma^1_2(0)$
		for all $a\in [0,1]$.
		Thus, there exists a point $\bar y\in[0,1]^2$ with $h(\bar y)=0$,
		i.e.\ $\gamma^2(\bar y_1)=\gamma^1(\bar y_2)$.
		Thus, the paths intersect, which leads to
		to $d=\dist(\gamma^1([0,1]),\hat C) \leq
		\dist(\gamma^1([0,1]),\gamma^2(\bar y_1))
		+\dist(\gamma^2(\bar y_1),\hat C)
		< 0 + d/2$, which is a contradiction.
		Hence, our assumption $O_1=O_2$ was wrong and they
		are different connected components.
	\fi
	Finally, we define
	\begin{equation*}
		\hat\psi_i:\R^2\to\R,
		\quad
		y\mapsto
		\begin{cases}
			\dist(y,\hat C)
			&\text{if}\; y\in O_1
			\\
			-\dist(y,\hat C)
			&\text{if}\; y\not\in O_1
		\end{cases}.
	\end{equation*}
	Because $O_1$ is a connected component of 
	$\R^2\setminus\hat C$, the function $\hat\psi_i$ 
	is continuous.
	Since $O_1$ and $O_2$ are different connected components
	and $O_2$ is open,
	we have $\hat\psi_i(y)<0$ on $O_2$.
	Because of
	$\nonneg\times\R\subset \hat C\cup O_2$
	and
	$\R\times \nonneg\subset \hat C\cup O_1$
	it follows that $\hat\psi_i(y_1,y_2)\leq0$ if $y_1\geq0$
	and $\hat\psi_i(y_1,y_2)\geq0$ if $y_2\geq0$ holds.
	Therefore, the function
	\begin{equation*}
		\psi:\R^p\times\R^p\to\R^p,
		\quad (a,b)\mapsto
		\paren[\big]{ \hat\psi_i(a_i,b_i)}_{i\in I^p}
	\end{equation*}
	satisfies the assumptions of \cref{lem:a_implies_psi}
% 	it follows that the conditions on $\hat\psi_i$
% 	in \cref{lem:a_implies_psi} are satisfied
	and we can apply the \lcnamecref{lem:a_implies_psi}.
	Thus, there exists a point $(\bar\mu,\bar\nu)$
	in the convex hull of 
	$\set{(\mu^\alpha,\nu^\alpha)\given \alpha\in\set{0,1}^p}$
	with
	$\psi_i(\bar\mu,\bar\nu)=0$
% 	$(\bar\mu_i\geq0\lor \bar\nu_i\geq0)\lor \psi_i(\bar\mu,\bar\nu_i)=0$
	for all $i\in I^{00}(\bar x)$.
	The result then follows from the definition
	of $\psi_i$ and $\hat C$,
	in particular the equivalence
	\begin{equation*}
		\psi_i(a,b)=0
		\iff
		\hat\psi_i(a_i,b_i)=0
		\iff
		(a_i,b_i)\in \hat C
		\iff
		(a_i\geq0\land b_i\geq0)\lor (a_i,b_i)\in C_i
	\end{equation*}
	for all $a,b\in\R^p$, $i\in I^{00}(\bar x)$.
\end{proof}

\begin{theorem}
	\label{thm:weird_stat_mpcc}
	Suppose $\bar x$ is an \astat\alpha-stationary point of \eqref{eq:mpcc}
	for all $\alpha\in\set{0,1}^p$.
	Then we have the following conditions.
	% Let $\bar x$ be a local minimizer of \eqref{eq:mpvc}.
	% Suppose that MPVC-GCQ holds at $\bar x$.
% 	Then there exists multipliers
% 	$(\bar\eta,\bar\lambda,\bar\mu,\bar\nu)$
% 	that satisfy the system of weak stationarity.
% 	Additionally, we have the following conditions.
% 	Then we have the following conditions.
	\begin{enumerate}
		\item
			\label{item:weird_stat_mpcc}
			For each $i\in I^p$,
			let $C_i\subset \nonpossq$ be a closed, connected and unbounded set
			with $0\in C_i$.
			Then there exists multipliers
			$(\bar\eta,\bar\lambda,\bar\mu,\bar\nu)$
			that satisfy the system of weak stationarity and
			\begin{equation*}
				(\bar\mu_i\geq0\land\bar\nu_i\geq0)
				\lor (\bar\mu_i,\bar\nu_i)\in C_i
				\qquad\forall i\in I^{00}(\bar x).
			\end{equation*}
		\item
			\label{item:cd_stat_mpcc}
			The point $\bar x$ is \cstat{d}-stationary
			for all $d\in [0,1]^p$.
% 			Let vectors $c,d\in\nonnegpow{p}$ be given.
% 			Then there exists multipliers
% 			$(\bar\eta,\bar\lambda,\bar\mu,\bar\nu)$
% 			that satisfy the system of weak stationarity and
% 			\begin{equation*}
% 				(\bar\mu_i\geq0\land\bar\nu_i\geq0)
% 				\lor c_i\bar\mu_i=d_i\bar\nu_i
% 				\qquad\forall i\in I^{00}(\bar x).
% 			\end{equation*}
	\end{enumerate}
	In particular, these stationarity conditions
	are satisfied for local minimizers $\bar x$
	if MPCC-GCQ holds at $\bar x$.
\end{theorem}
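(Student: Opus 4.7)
The plan is to reduce both parts of the theorem to Lemma \ref{lem:combine_astat_weird}, which already contains the genuinely nontrivial (Poincaré--Miranda based) topological content. Part (b) will follow from part (a) via a careful choice of the sets $C_i$, and the ``in particular'' statement will follow by combining with Proposition \ref{prop:astat}.

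For part (a), the hypothesis gives, for each $\alpha\in\set{0,1}^p$, multipliers $(\lambda^\alpha,\eta^\alpha,\mu^\alpha,\nu^\alpha)$ witnessing \astat\alpha-stationarity of $\bar x$; in particular $(\mu^\alpha,\nu^\alpha)\in A^\alpha$ for each $\alpha$. I would apply Lemma \ref{lem:combine_astat_weird} to the given sets $C_i$ to obtain a point $(\bar\mu,\bar\nu)$ in the convex hull of $\set{(\mu^\alpha,\nu^\alpha)\given \alpha\in\set{0,1}^p}$ with the required dichotomy on $I^{00}(\bar x)$. Writing $(\bar\mu,\bar\nu)=\sum_\alpha g_\alpha(\mu^\alpha,\nu^\alpha)$ with the same nonnegative coefficients $g_\alpha$ summing to one, I would then set $\bar\lambda:=\sum_\alpha g_\alpha \lambda^\alpha$ and $\bar\eta:=\sum_\alpha g_\alpha\eta^\alpha$. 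Since the weak stationarity system consists of a linear equation in $(\lambda,\eta,\mu,\nu)$ together with sign/equality constraints on individual components of $\bar\lambda$, $\bar\mu$, $\bar\nu$ that are linear and satisfied by every $(\lambda^\alpha,\eta^\alpha,\mu^\alpha,\nu^\alpha)$, the convex combination inherits all of them, giving weak stationarity of $(\bar\lambda,\bar\eta,\bar\mu,\bar\nu)$.

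For part (b), given $d\in [0,1]^p$, for each $i\in I^p$ I would choose
\[
    C_i := \set{(a,b)\in\nonpossq \given (1-d_i)\, a = d_i\, b}.
\]
A short case distinction shows that $C_i$ is a closed ray through the origin lying in $\nonpossq$: it degenerates to $\set{0}\times\nonpos$ when $d_i=0$, to $\nonpos\times\set{0}$ when $d_i=1$, and otherwise is the half-line $\set{t(d_i,1-d_i)\given t\leq 0}$ in the third quadrant. In all three cases $C_i$ is closed, connected, unbounded, and contains the origin, so the hypotheses of part (a) are met. Applying part (a), the resulting multipliers satisfy, for every $i\in I^{00}(\bar x)$, the condition $(\bar\mu_i\geq0\land \bar\nu_i\geq0)\lor (1-d_i)\bar\mu_i=d_i\bar\nu_i$, which is precisely the defining condition for \cstat{d}-stationarity.

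The ``in particular'' statement is then immediate: if $\bar x$ is a local minimizer of \eqref{eq:mpcc} satisfying MPCC-GCQ, Proposition \ref{prop:astat} ensures \astat\alpha-stationarity of $\bar x$ for every $\alpha\in\set{0,1}^p$, i.e.\ the hypothesis of the theorem holds, so both conclusions apply. No step in this proof presents a real obstacle, since all the topological work (establishing that $\R^2\setminus\hat C$ has the correct two connected components and invoking Poincaré--Miranda to produce the convex combination) is already encapsulated in Lemma \ref{lem:combine_astat_weird}; the only genuinely new verification is the elementary check that the rays $C_i$ chosen for part (b) satisfy the closed/connected/unbounded hypothesis and cut out exactly the $\cstat{d}$-condition.
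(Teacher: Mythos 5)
Your proposal is correct and follows essentially the same route as the paper: reduce part (a) to \cref{lem:combine_astat_weird} using the multipliers from \astat\alpha-stationarity and the convexity of the weak stationarity system, obtain part (b) via the choice $C_i=\set{(a,b)\in\nonpossq\given (1-d_i)a=d_ib}$, and derive the ``in particular'' statement from \cref{prop:astat}. Your extra details (the explicit convex coefficients for $\bar\lambda,\bar\eta$ and the case distinction verifying that each $C_i$ is a closed, connected, unbounded ray containing the origin) are correct elaborations of steps the paper leaves implicit.
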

\begin{proof}
	We start with part~\ref{item:weird_stat_mpcc}.
	For each $\alpha\in\set{0,1}^p$, let
	$(\lambda^\alpha,\eta^\alpha,\mu^\alpha,\nu^\alpha)$
	be multipliers which satisfy the system of \astat{\alpha}-stationarity.
	We want to apply \cref{lem:combine_astat_weird}.
% 	with $\hat I=I^{00}(\bar x)$.
	By definition, we have $(\mu^\alpha,\nu^\alpha)\in A^\alpha$,
	where $A^\alpha$ is defined as in \eqref{eq:def_set_A_alpha}.
	Thus, an application of \cref{lem:combine_astat_weird}
	yields a convex combination
	$(\bar\lambda,\bar\eta,\bar\mu,\bar\nu)$ of the multipliers
	$(\lambda^\alpha,\eta^\alpha,\mu^\alpha,\nu^\alpha)$
	which satisfies $ (\bar\mu_i\geq0\land\bar\nu_i\geq0)
	\lor (\bar\mu_i,\bar\nu_i)\in C_i$ for all $i\in I^{00}(\bar x)$.
	Furthermore,
	$(\bar\lambda,\bar\eta,\bar\mu,\bar\nu)$
	satisfies the system of weak stationarity
	due to the convex nature of the weak stationarity system
	and because
	$(\lambda^\alpha,\eta^\alpha,\mu^\alpha,\nu^\alpha)$
	satisfy the system of weak stationarity for all $\alpha\in\set{0,1}^p$.

	Part~\ref{item:cd_stat_mpcc} follows from
	part~\ref{item:weird_stat_mpcc} with the choice
	$C_i:=\set{(a,b)\in \nonpossq\given (1-d_i)a = d_ib}$,
	which is indeed a closed, connected and unbounded set with $0\in C_i$.

	Due to \cref{prop:astat},
	these stationarity conditions are also 
	satisfied if $\bar x$ is a local minimizer of
	\eqref{eq:mpcc} that satisfies MPCC-GCQ.
\end{proof}
Clearly, part~\ref{item:cd_stat_mpcc} was only a special case
of part~\ref{item:weird_stat_mpcc} in \cref{thm:weird_stat_mpcc},
but we included it because it is a more natural condition.

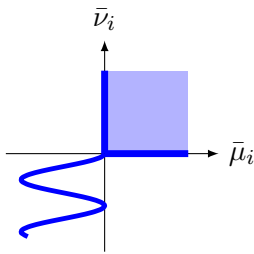
\begin{figure}[t]
% 	\centering
		\def\weirdstat{1}
		\centering
\begin{tikzpicture}[scale=1.1]
	\fill[blue, opacity=0.3]
	(1,0) -- (1,1) --(0,1) --(0,0) --cycle;
	\ifthenelse{\castat=1}{
		\fill[blue, opacity=0.3]
		(-1,0) -- (-1,-1) --(0,-1) --(0,0) --cycle;
	}{}
	\ifthenelse{\aastat=1}{
		\fill[blue, opacity=0.3]
		(1,0) -- (1,-1) --(0,-1) --(0,0) --cycle;
	}{}
	\ifthenelse{\abstat=1}{
		\fill[blue, opacity=0.3]
		(-1,0) -- (-1,1) --(0,1) --(0,0) --cycle;
	}{}
	\draw[axis] (-1,0) -- (1,0)
% 	node[right=2* \nudge cm] {$G(x)$\sometimes{, $\mu$}};
	node[right=2* \nudge cm] {$\bar\mu_i$};
	\draw[axis] (0,-1) -- (0,1)
% 	node[above=2*\nudge cm] {$H(x)$\sometimes{, $\nu$}};
	node[above=2*\nudge cm] {$\bar\nu_i$};
	\draw[line width=\feaslw,blue] (0,0) -- (1,0) ;
	\draw[line width=\feaslw,blue] (0,0) -- (0,1) ;
	\ifthenelse{\pastat=1 \OR \aastat=1 \OR \castat=1}{
		\draw[line width=\feaslw,blue] (0,-1) -- (0,0) ;
	}{}
	\ifthenelse{\cdstat=1}{
		\draw[line width=\feaslw,blue] (0,0) -- (-1,-1) ;
	}{}
	\ifthenelse{\weirdstat=1}{
		\draw[line width=0.7*\feaslw,smooth,blue,domain=0:1,variable=\x ] 
% 		plot ({-sin(300*\x)*sin(300*\x)}, {-\x});
		plot ({-sin(5*deg(\x))*sin(5*deg(\x))}, {-\x});
	}{}
	\ifthenelse{\pbstat=1 \OR \abstat=1 \OR \castat=1}{
		\draw[line width=\feaslw,blue] (-1,0) -- (0,0) ;
	}{}
\end{tikzpicture}
\def\pastat{0}
\def\pbstat{0}
\def\aastat{0}
\def\abstat{0}
\def\castat{0}
\def\cdstat{0}
	\caption{geometric illustration of the stationarity condition
		of \cref{thm:weird_stat_mpcc}~\ref{item:weird_stat_mpcc}
	with $i\in I^{00}(\bar x)$, 
	$C_i=\set{(-\sin^2(5a),a)\mid a\leq0}$
	}
	\label{fig:weird_stat_mpcc}
\end{figure}

As a simple \lcnamecref{cor:equiv_mpcc}, let us state
some relations among the new stationarity conditions.
\begin{corollary}
	\label{cor:equiv_mpcc}
	Let $\bar x$ be a feasible point of \eqref{eq:mpcc}.
	The following are equivalent.
	\begin{enumerate}
		\item
			\label{item:cor:bstat}
			$\bar x$ is linearized-B-stationary,
		\item
			\label{item:cor:astat}
			$\bar x$ is \astat\alpha-stationary for all $\alpha\in\set{0,1}^p$,
		\item
			\label{item:cor:cdstat}
			$\bar x$ is \cstat{d}-stationary for all $d\in [0,1]^p$,
		\item
			\label{item:cor:pstat}
			$\bar x$ is \pstat\alpha-stationary for all $\alpha\in\set{0,1}^p$.
	\end{enumerate}
\end{corollary}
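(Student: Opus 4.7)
The plan is to establish the four-way equivalence through a short cycle of implications, each of which has already been proved in an earlier result of this paper or follows immediately from the definitions. Concretely, I would aim at the chain \ref{item:cor:bstat} $\iff$ \ref{item:cor:astat} $\implies$ \ref{item:cor:cdstat} $\implies$ \ref{item:cor:pstat} $\implies$ \ref{item:cor:astat}, which closes the loop.

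The equivalence \ref{item:cor:bstat} $\iff$ \ref{item:cor:astat} is precisely \cref{prop:astat}~\ref{item:bstat_iff_astat}, so no work is needed there. The implication \ref{item:cor:astat} $\implies$ \ref{item:cor:cdstat} is the conclusion of \cref{thm:weird_stat_mpcc}~\ref{item:cd_stat_mpcc}, whose hypothesis is exactly \astat\alpha-stationarity for every $\alpha\in\set{0,1}^p$; no extra argument is required.

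For \ref{item:cor:cdstat} $\implies$ \ref{item:cor:pstat}, I would appeal to the remark already made after \cref{def:new_stat_mpcc}: for $\alpha\in\set{0,1}^p$ the definitions of \cstat\alpha-stationarity and \pstat\alpha-stationarity coincide. Thus, if $\bar x$ is \cstat{d}-stationary for every $d\in[0,1]^p$, then specializing to $d=\alpha\in\set{0,1}^p\subset[0,1]^p$ yields \pstat\alpha-stationarity for every $\alpha\in\set{0,1}^p$. Finally, \ref{item:cor:pstat} $\implies$ \ref{item:cor:astat} is \cref{cor:relations}~\ref{item:pstat_implies_astat} applied separately for each $\alpha\in\set{0,1}^p$.

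There is no substantive obstacle here: the statement is genuinely a corollary, collecting in one place the various directions that were established by \cref{prop:astat}, \cref{thm:between_m_and_s}, \cref{thm:weird_stat_mpcc}, and \cref{cor:relations}. The only care needed is bookkeeping — in particular, exploiting the elementary identity that \cstat\alpha-stationarity and \pstat\alpha-stationarity agree on $\set{0,1}^p$ to bridge between the $[0,1]^p$-statement \ref{item:cor:cdstat} and the $\set{0,1}^p$-statement \ref{item:cor:pstat} without any additional combinatorial or analytical work.
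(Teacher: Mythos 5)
Your proposal is correct and follows essentially the same chain as the paper's own proof: \ref{item:cor:bstat}$\iff$\ref{item:cor:astat} via \cref{prop:astat}~\ref{item:bstat_iff_astat}, \ref{item:cor:astat}$\implies$\ref{item:cor:cdstat} via \cref{thm:weird_stat_mpcc}~\ref{item:cd_stat_mpcc}, then \ref{item:cor:cdstat}$\implies$\ref{item:cor:pstat} (which the paper calls trivial and you justify by the consistency of \cstat{d}- and \pstat\alpha-stationarity at $d=\alpha$), and finally \ref{item:cor:pstat}$\implies$\ref{item:cor:astat} via \cref{cor:relations}~\ref{item:pstat_implies_astat}.
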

\begin{proof}
	The equivalence 
	\ref{item:cor:bstat}$\iff$\ref{item:cor:astat}
	was already stated in
	\cref{prop:astat}~\ref{item:bstat_iff_astat},
	and \ref{item:cor:astat}$\implies$\ref{item:cor:cdstat}
	was shown in \cref{thm:weird_stat_mpcc}~\ref{item:cd_stat_mpcc}.
	Finally, the implication
	\ref{item:cor:cdstat}$\implies$\ref{item:cor:pstat}
	is trivial
	and \ref{item:cor:pstat}$\implies$\ref{item:cor:astat}
	follows from
	\cref{cor:relations}~\ref{item:pstat_implies_astat}.
\end{proof}

\section{A simple proof of M-stationarity for MPVCs}
\label{sec:simple_mstat}

We turn our attention to MPVCs.
As one can see in
\cref{fig:mstat_mpvc,fig:astat0_mpvc},
% if $i\in I^{00}(\bar x)$,
% if $i\in I_{\alpha=0}^{00}(\bar x)$,
the feasible set for the multipliers $(\bar\mu_i,\bar\nu_i)$ 
for \astat0-stationarity is a subset of
the feasible set for M-stationarity.
In particular, \astat0-stationarity implies
M-stationarity.
Note that such a relation does not hold for MPCCs.
Since \astat\alpha-stationarity is usually easy to show under MPVC-GCQ
(see also \cref{prop:astat_mpvc}), this will lead to a very
simple and short proof of M-stationarity under MPVC-GCQ.
This is a known result, see, e.g.\
\cite[Theorem~3.4]{HoheiselKanzow2008}, but our proof is much simpler.
Furthermore, 
GCQ for \paramref{eq:nlp_mpvc}{0}
also works as a constraint qualification for M-stationarity.
\begin{theorem}
	\label{thm:simple_m_stat}
	Let $\bar x$ be a minimizer of \eqref{eq:mpvc}.
	Suppose that
	\begin{equation}
		\label{eq:or_cq}
		\text{MPVC-GCQ}\quad\text{or}\quad
		\text{GCQ for \paramref{eq:nlp_mpvc}{0}}
	\end{equation}
	holds at $\bar x$.
	Then $\bar x$ is \astat0-stationary.
	In particular,
	$\bar x$ is an M-stationary point.
\end{theorem}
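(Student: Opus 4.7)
The plan is to exploit the fact that, in the MPVC setting, \astat0-stationarity automatically entails M-stationarity: the defining requirement $\bar\mu_i=0$ for $i\in I^{00}(\bar x)$ makes the product $\bar\mu_i\bar\nu_i=0$ trivially, and the remaining sign conditions in \cref{def:mpvc_stat} are inherited from weak stationarity. Hence it suffices to establish \astat0-stationarity of $\bar x$ under either alternative in \eqref{eq:or_cq}.

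The first observation I would make is that the feasible set of \paramref{eq:nlp_mpvc}{0} is contained in the feasible set of \eqref{eq:mpvc}. For indices in $I^{+0}(\bar x)\cup I^{00}(\bar x)$ the equality $H_i(x)=0$ forces both $H_i(x)\geq0$ and $G_i(x)H_i(x)\leq0$; for the remaining indices in $I^{-0}(\bar x)\cup I^{-+}(\bar x)\cup I^{0+}(\bar x)$ the constraints $G_i(x)\leq0$ and $H_i(x)\geq0$ directly imply $G_i(x)H_i(x)\leq0$. Consequently $\bar x$ is also a local minimizer of \paramref{eq:nlp_mpvc}{0}, so \cref{lem:basic_nlp_theory}~\ref{item:b_stat} yields $-\nabla f(\bar x)\in \tnlp{0}(\bar x)\polar$.

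Next I would derive $-\nabla f(\bar x)\in \tnlplin{0}(\bar x)\polar$ under either constraint qualification in \eqref{eq:or_cq}. If GCQ holds for \paramref{eq:nlp_mpvc}{0}, this is immediate from $\tnlp{0}(\bar x)\polar=\tnlplin{0}(\bar x)\polar$. Under MPVC-GCQ, I would instead verify the inclusion $\tnlplin{0}(\bar x)\subseteq\tmpvclin(\bar x)$ by comparing the defining systems: demanding $\nabla H_i(\bar x)^\top d=0$ on $I^{00}(\bar x)$ (as NLP$(0)$ does) automatically yields both the sign condition $\nabla H_i(\bar x)^\top d\geq0$ and the product condition $(\nabla G_i(\bar x)^\top d)(\nabla H_i(\bar x)^\top d)\leq0$ required by $\tmpvclin(\bar x)$, while all other index sets contribute identical half-space conditions. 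Taking polars reverses the inclusion, and combining with MPVC-GCQ and $-\nabla f(\bar x)\in\tmpvc(\bar x)\polar$ again gives the desired inclusion.

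Finally, \cref{lem:basic_nlp_theory}~\ref{item:lin_implies_kkt} converts $-\nabla f(\bar x)\in\tnlplin{0}(\bar x)\polar$ into KKT multipliers of \paramref{eq:nlp_mpvc}{0}, which by design form an \astat0-stationarity system for \eqref{eq:mpvc}, and then M-stationarity follows as noted above. There is really no serious obstacle here; the only non-trivial step is the cone comparison $\tnlplin{0}(\bar x)\subseteq\tmpvclin(\bar x)$ in the MPVC-GCQ branch, and that is entirely routine from the definitions in \cref{def:general}. This directness is precisely what gives the short proof advertised by the author and explains why MPVCs are more amenable than MPCCs to this elementary approach.
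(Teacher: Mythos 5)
Your proof is correct and takes essentially the same approach as the paper: both reduce the claim to $-\nabla f(\bar x)\in\tnlplin{0}(\bar x)\polar$ --- via $\tnlplin{0}(\bar x)\subseteq\tmpvclin(\bar x)$ in the MPVC-GCQ branch, and via the feasible-set inclusion of \paramref{eq:nlp_mpvc}{0} in \eqref{eq:mpvc} in the other branch --- and then invoke \cref{lem:basic_nlp_theory}~\ref{item:lin_implies_kkt} together with the observation that $\bar\mu_i=0$ on $I^{00}(\bar x)$ makes \astat0-stationarity imply M-stationarity. The only cosmetic difference is that in the GCQ branch you transfer local minimality to \paramref{eq:nlp_mpvc}{0} to obtain $-\nabla f(\bar x)\in\tnlp{0}(\bar x)\polar$ directly, whereas the paper keeps $-\nabla f(\bar x)\in\tmpvc(\bar x)\polar$ and uses the induced tangent-cone inclusion $\tnlp{0}(\bar x)\subset\tmpvc(\bar x)$ before taking polars.
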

\begin{proof}
	We have $-\nabla f(\bar x)\in\tmpvc(\bar x)\polar$
	due to \cref{lem:basic_nlp_theory}~\ref{item:b_stat}.
	In the case that MPVC-GCQ holds, we can use
	$\tnlplin{0}(\bar x)\subset\tmpvclin(\bar x)$
	to obtain
	$\tmpvc(\bar x)\polar = \tmpvclin(\bar x)\polar
	\subset \tnlplin{0}(\bar x)\polar$.
	In the case that GCQ holds for
	\paramref{eq:nlp_mpvc}{0}, we can use
	$\tnlp0(\bar x)\subset\tmpvc(\bar x)$
	to obtain
	$\tmpvc(\bar x)\polar \subset \tnlp0(\bar x)\polar
	= \tnlplin{0}(\bar x)\polar$.
	In both cases we have $-\nabla f(\bar x)\in \tnlplin0(\bar x)\polar$.
	By \cref{lem:basic_nlp_theory}~\ref{item:lin_implies_kkt},
% 	$\bar x$ is a KKT point of 
% 	\paramref{eq:nlp_mpvc}{0}, 
% 	i.e.\ 
	$\bar x$ is \astat0-stationary.

% 	The M-stationarity of $\bar x$ 
% 	follows directly from the \astat0-stationarity,
	If 
	$(\bar\lambda,\bar\eta,\bar\mu,\bar\nu)$
	satisfies the system of \astat0-stationarity,
	then we have
	$\bar\mu_i=0$ for all $i\in I^{00}(\bar x)$ and therefore
	the multipliers also satisfy the system of M-stationarity.
\end{proof}
Note that this proof does not rely
on the more complicated methods from 
\cref{sec:between_mpcc,sec:weird_stat}
or on advanced techniques from variational analysis
such as the limiting normal cone.
And because \eqref{eq:or_cq} is a weaker condition
than MPVC-GCQ (see \cref{ex:mpvcgcq_not_gcq_emptyset} below), 
we have even generalized the result
of M-stationarity under MPVC-GCQ slightly.
We mention that a similarly elementary method
was used to show M-stationarity for
\emph{mathematical programs with switching constraints} (MPSCs)
in \cite[Theorem~5.1]{Mehlitz2019:1}.
% Also, observe that \astat0-stationarity is the same as
% \pstat0-stationarity for MPVCS

One might wonder what the relationship is between
MPVC-GCQ and GCQ for 
\paramref{eq:nlp_mpvc}{0}.
The following two counterexamples show that
neither implies the other.
\begin{example}
	\label{ex:mpvcgcq_not_gcq_emptyset}
	We consider the setting with $n=2$, $p=l=m=1$,
	$G(x)=x_1$, $H(x)=x_2$, and $\bar x=(0,0)$.
	\begin{enumerate}
		\item
			\label{item:not_gcq_emptyset_mpvcgcq}
			If $h(x)=x_2^2-x_1$ and $g(x)=0$,
			then MPVC-GCQ does not hold at $\bar x$,
			but GCQ holds for \paramref{eq:nlp_mpvc}{0}
			at $\bar x$.
		\item
			\label{item:gcq_emptyset_not_mpvcgcq}
			If $h(x)=x_1^2-x_2$, and $g(x)=x_1$.
			Then MPVC-GCQ holds at $\bar x$, but GCQ does not hold
			for \paramref{eq:nlp_mpvc}{0} at $\bar x$.
	\end{enumerate}
\end{example}
\begin{proof}
	We have $I^p=I^{00}(\bar x)=\set1$.
	For part~\ref{item:not_gcq_emptyset_mpvcgcq},
	the point $\bar x=(0,0)$ is the only feasible point of
	\eqref{eq:mpvc} and \paramref{eq:nlp_mpvc}{0}.
	Thus, we have $\tmpvc(\bar x)=\tnlp0=\set{(0,0)}$.
	One can also calculate
	$\tmpvclin(\bar x)=\set0\times\nonneg$
	and $\tnlplin0(\bar x)=\set{(0,0)}$.
	Taking polar cones yields the claim.

	For part~\ref{item:gcq_emptyset_not_mpvcgcq},
	the feasible set of \paramref{eq:nlp_mpvc}0
	is again just $\set{\bar x}$, but the feasible set
	of \eqref{eq:mpvc} is $\set{x\in\R^2\given x_1^2=x_2, x_1\leq0}$.
	Thus, we have $\tmpvc(\bar x)=\nonpos\times\set0$
	and $\tnlp0=\set{(0,0)}$.
	For the linearization cones, one can calculate
	$\tmpvclin(\bar x)=\tnlp0(\bar x)=\nonpos\times\set0$.
	Taking polar cones yields the claim.
\end{proof}

\section{New stationarity conditions for MPVCs}
\label{sec:between_mpvc}

In this \lcnamecref{sec:between_mpvc} we will use
the results of \cref{sec:between_mpcc,sec:weird_stat}
and apply them in the setting of MPVCs.
This will lead to new stationarity conditions for MPVCs.
An important result will be that \pstat\alpha-stationarity
is a first-order necessary optimality condition under MPVC-GCQ
for all $\alpha\in\set{0,1}^p$.

% We start with a \lcnamecref{prop:astat_mpvc}
% that local minimizers are \astat\alpha-stationary
% under MPVC-GCQ for all $\alpha\in\set{0,1}^p$.
We start with an analogue to \cref{prop:astat} for MPVCs.
\begin{proposition}
	\label{prop:astat_mpvc}
	Let $\bar x\in\R^n$ be a feasible point of \eqref{eq:mpvc}.
	\begin{enumerate}
		\item
			\label{item:bstat_mpvc}
			If $\bar x$ is a local minimizer of \eqref{eq:mpvc}
			that satisfies MPVC-GCQ, then $\bar x$ is
			linearized B-stationary.
		\item
			\label{item:bstat_iff_astat_mpvc}
			The point $\bar x$ is linearized B-stationary if and only if it is
			\astat\alpha-stationary for all $\alpha\in\set{0,1}^p$.
	\end{enumerate}
	In particular, if $\bar x$ is a local minimizer of \eqref{eq:mpvc}
	that satisfies MPVC-GCQ,
	then it is \astat\alpha-stationary
	for all $\alpha\in\set{0,1}^p$.
\end{proposition}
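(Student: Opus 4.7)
The plan is to mimic the proof of \cref{prop:astat} line by line, replacing the MPCC constructions with their MPVC counterparts. The structural setup is identical; the only non-trivial content is the MPVC analogue of the union decomposition of the linearized tangent cone, which I comment on below.

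For part~\ref{item:bstat_mpvc}, I would simply chain together \cref{lem:basic_nlp_theory}~\ref{item:b_stat} and the definition of MPVC-GCQ to obtain
\begin{equation*}
	-\nabla f(\bar x) \in \tmpvc(\bar x)\polar = \tmpvclin(\bar x)\polar,
\end{equation*}
which is the definition of linearized B-stationarity.

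For part~\ref{item:bstat_iff_astat_mpvc}, the key identity I would establish is the MPVC analogue of the union decomposition from \cref{prop:astat},
\begin{equation*}
	\tmpvclin(\bar x) = \bigcup_{\alpha\in\set{0,1}^p} \tnlplin{\alpha}(\bar x),
\end{equation*}
where on the right-hand side $\tnlplin{\alpha}(\bar x)$ is the standard linearization cone of the MPVC auxiliary problem \paramref{eq:nlp_mpvc}{\alpha}. Once this is in hand, taking polars and commuting with the intersection gives $\tmpvclin(\bar x)\polar = \bigcap_\alpha \tnlplin{\alpha}(\bar x)\polar$, and \cref{lem:basic_nlp_theory}~\ref{item:lin_implies_kkt} translates membership of $-\nabla f(\bar x)$ in each $\tnlplin{\alpha}(\bar x)\polar$ into \astat\alpha-stationarity.

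To verify the union decomposition, I would inspect the index-by-index constraints of $\tmpvclin(\bar x)$ versus $\tnlplin{\alpha}(\bar x)$. All conditions involving $g$, $h$, $I^{+0}$, $I^{-0}$, $I^{-+}$, $I^{0+}$ are identical in both cones. The only index set where they differ is $i\in I^{00}(\bar x)$, where $\tmpvclin(\bar x)$ imposes $\nabla H_i(\bar x)^\top d\geq 0$ together with $(\nabla G_i(\bar x)^\top d)(\nabla H_i(\bar x)^\top d)\leq 0$. This disjunctive system is equivalent to saying that either $\nabla H_i(\bar x)^\top d=0$ (which matches $\tnlplin{\alpha}(\bar x)$ with $\alpha_i=0$) or simultaneously $\nabla H_i(\bar x)^\top d\geq0$ and $\nabla G_i(\bar x)^\top d\leq 0$ (matching $\tnlplin{\alpha}(\bar x)$ with $\alpha_i=1$). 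Choosing $\alpha_i$ pointwise according to which branch a given $d$ satisfies yields the inclusion ``$\subset$'', while the reverse inclusion follows by checking that any $d\in\tnlplin{\alpha}(\bar x)$ satisfies both $\nabla H_i(\bar x)^\top d\geq 0$ and the product inequality for every $i\in I^{00}(\bar x)$. The main (minor) obstacle is keeping track of the sign conventions carefully, since in the MPVC setting $\bar\mu_i\geq0$ for $i\in I^{00}(\bar x)\cup I^{0+}(\bar x)$ is already built into weak stationarity, whereas in the MPCC case the multiplier signs had to be extracted from $\alpha$. The concluding ``in particular'' statement then follows by combining both parts.
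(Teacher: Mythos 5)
Your proposal is correct and follows essentially the same route as the paper: part~\ref{item:bstat_mpvc} is the identical chain through \cref{lem:basic_nlp_theory}~\ref{item:b_stat} and MPVC-GCQ, and part~\ref{item:bstat_iff_astat_mpvc} uses the same union decomposition $\tmpvclin(\bar x)=\bigcup_{\alpha\in\set{0,1}^p}\tnlplin{\alpha}(\bar x)$, polarization, and \cref{lem:basic_nlp_theory}~\ref{item:lin_implies_kkt}. The only difference is that the paper justifies the decomposition by citing the literature (or ``direct calculations''), whereas you carry out the index-by-index verification explicitly --- and your case analysis on $I^{00}(\bar x)$ is accurate.
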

\begin{proof}
	For part~\ref{item:bstat_mpvc}, we obtain
	$-\nabla f(\bar x)\in \tmpvc(\bar x)\polar
	=\tmpvclin(\bar x)\polar$
	from \cref{lem:basic_nlp_theory}~\ref{item:b_stat} and MPVC-GCQ.
	Thus, $\bar x$ is linearized B-stationary.

% 	the equality
% 	\begin{equation*}
% 		\tmpvclin(\bar x)
% 		=\bigcup_{\alpha\in\set{0,1}^p} \tnlplin{\alpha}(\bar x)
% 	\end{equation*}
% 	for the MPVC case can be found in \cite{}
% 	and the rest of the argument is the same as in \cref{prop:astat}.
	For part~\ref{item:bstat_iff_astat_mpvc},
	we first observe the equality
	\begin{equation*}
		\tmpvclin(\bar x)
		=\bigcup_{\alpha\in\set{0,1}^p} \tnlplin{\alpha}(\bar x),
	\end{equation*}
	which can be shown by direct calculations
	or obtained from \cite[Lemma~2.4]{HoheiselKanzow2008}.
	Therefore, linearized B-stationarity can be written as
	\begin{equation*}
		-\nabla f(\bar x)\in \tmpvclin(\bar x)\polar
		= \paren[\bigg]{ 
		\bigcup_{\alpha\in\set{0,1}^p} \tnlplin{\alpha}(\bar x)}\polar
		= \bigcap_{\alpha\in\set{0,1}^p}\tnlplin{\alpha}(\bar x)\polar.
	\end{equation*}
	Thus, $\bar x$ is linearized B-stationary if and only if
	$-\nabla f(\bar x)\in \tnlplin{\alpha}(\bar x)\polar$
	for all $\alpha\in\set{0,1}^p$.
	However, the latter condition is equivalent to
	\astat\alpha-stationarity of $\bar x$
	by \cref{lem:basic_nlp_theory}~\ref{item:lin_implies_kkt}.
\end{proof}
Now we come to the main result for MPVCs, 
which is an analogue of \cref{thm:between_m_and_s,thm:weird_stat_mpcc}.
The proof is not very difficult, since it is
possible to apply \cref{lem:combine_astat_weird}.
\begin{theorem}
	\label{thm:mpvc_new_stat}
	Suppose $\bar x$ is an \astat\alpha-stationary point of \eqref{eq:mpvc}
	for all $\alpha\in\set{0,1}^p$.
	Then we have the following conditions.
	% Let $\bar x$ be a local minimizer of \eqref{eq:mpvc}.
	% Suppose that MPVC-GCQ holds at $\bar x$.
% 	Then there exists multipliers
% 	$(\bar\eta,\bar\lambda,\bar\mu,\bar\nu)$
% 	that satisfy the system of weak stationarity.
% 	Additionally, we have the following conditions.
	\begin{enumerate}
		\item
			\label{item:mpvc_weird_stat}
			For each $i\in I^p$,
			let $C_i\subset \nonpossq$ be a closed, connected and unbounded set
			with $0\in C_i$.
			Then there exists multipliers
			$(\bar\eta,\bar\lambda,\bar\mu,\bar\nu)$
			that satisfy the system of weak stationarity and
			\begin{equation*}
				% \label{eq:weird_stat}
				(\bar\mu_i=0\land\bar\nu_i\geq0)
				\lor (-\bar\mu_i,\bar\nu_i)\in C_i
				\qquad\forall i\in I^{00}(\bar x).
			\end{equation*}
		\item
			\label{item:mpvc_vector_stat}
			The point $\bar x$ is \cstat{d}-stationary
			for all $d\in [0,1]^p$.
% 			Let vectors $c,d\in\nonnegpow{p}$ be given.
% 			Then there exists multipliers
% 			$(\bar\eta,\bar\lambda,\bar\mu,\bar\nu)$
% 			that satisfy the system of weak stationarity and
% 			\begin{equation*}
% 				(\bar\mu_i=0\land\bar\nu_i\geq0)
% 				\lor -c_i\bar\mu_i = d_i\bar\nu_i
% 				\qquad\forall i\in I^{00}(\bar x).
% 			\end{equation*}
		\item
			\label{item:mpvc_new_stat}
			The point $\bar x$ is \pstat\alpha-stationary
			for all $\alpha\in\set{0,1}^p$.
	\end{enumerate}
	In particular, these stationarity conditions
	are satisfied for local minimizers $\bar x$
	if MPVC-GCQ holds at $\bar x$.
\end{theorem}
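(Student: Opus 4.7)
The plan is to mirror the proof of \cref{thm:weird_stat_mpcc} by applying \cref{lem:combine_astat_weird}, with one small bookkeeping adjustment to handle the different sign conventions for MPVCs: in the MPVC weak-stationarity system the term $+\sum_i\bar\mu_i\nabla G_i(\bar x)$ appears (as opposed to $-\sum_i\bar\mu_i\nabla G_i(\bar x)$ in the MPCC case), and weak stationarity already forces $\bar\mu_i\geq0$ for $i\in I^{00}(\bar x)$. I will therefore apply the lemma to the reflected pairs $(-\mu^\alpha,\nu^\alpha)$ rather than $(\mu^\alpha,\nu^\alpha)$.

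For part~\ref{item:mpvc_weird_stat}, for every $\alpha\in\set{0,1}^p$ pick multipliers $(\lambda^\alpha,\eta^\alpha,\mu^\alpha,\nu^\alpha)$ witnessing \astat\alpha-stationarity. By \cref{def:new_stat_mpvc}~\ref{item:def:astat_mpvc} we have $\mu_i^\alpha=0$ for $i\in I_{\alpha=0}^{00}(\bar x)$ and $\nu_i^\alpha\geq0$ for $i\in I_{\alpha=1}^{00}(\bar x)$, so $(-\mu^\alpha,\nu^\alpha)\in A^\alpha$ in the sense of \eqref{eq:def_set_A_alpha}. Applying \cref{lem:combine_astat_weird} with the prescribed sets $C_i$ yields a convex combination $(\tilde\mu,\tilde\nu)$ of the points $(-\mu^\alpha,\nu^\alpha)$ such that $(\tilde\mu_i\geq0\land\tilde\nu_i\geq0)\lor(\tilde\mu_i,\tilde\nu_i)\in C_i$ for every $i\in I^{00}(\bar x)$. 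Set $\bar\mu:=-\tilde\mu$, $\bar\nu:=\tilde\nu$ and take the matching convex combination of the $(\lambda^\alpha,\eta^\alpha)$. Because the MPVC weak-stationarity system is convex (all its inequalities are linear), $(\bar\lambda,\bar\eta,\bar\mu,\bar\nu)$ satisfies weak stationarity; in particular $\bar\mu_i\geq0$ for $i\in I^{00}(\bar x)$. On the first disjunct we additionally have $-\bar\mu_i=\tilde\mu_i\geq0$, so $\bar\mu_i=0$, and the condition reduces exactly to $(\bar\mu_i=0\land\bar\nu_i\geq0)\lor(-\bar\mu_i,\bar\nu_i)\in C_i$.

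Part~\ref{item:mpvc_vector_stat} is obtained by specializing~\ref{item:mpvc_weird_stat} to $C_i:=\set{(a,b)\in\nonpossq\given(1-d_i)a=d_i b}$, which is a ray (or coordinate half-axis) through the origin in $\nonpossq$ and thus closed, connected, and unbounded with $0\in C_i$; substituting $a=-\bar\mu_i$, $b=\bar\nu_i$ converts the membership condition into $(1-d_i)\bar\mu_i=-d_i\bar\nu_i$. Part~\ref{item:mpvc_new_stat} is then the case $d=\alpha\in\set{0,1}^p$, for which the identity $(1-d_i)\bar\mu_i=-d_i\bar\nu_i$ collapses to $\bar\mu_i=0$ when $\alpha_i=0$ and to $\bar\nu_i=0$ when $\alpha_i=1$, matching \cref{def:new_stat_mpvc}~\ref{item:def:p_stat_mpvc}. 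Finally, for the ``in particular'' assertion, if $\bar x$ is a local minimizer of \eqref{eq:mpvc} satisfying MPVC-GCQ, then \cref{prop:astat_mpvc} supplies the hypothesis of \astat\alpha-stationarity for every $\alpha\in\set{0,1}^p$, and the conclusions follow.

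The only non-routine point is the sign reflection and the use of weak stationarity to collapse the first disjunct; the geometric content (connectedness, Poincaré--Miranda) is entirely absorbed into \cref{lem:combine_astat_weird}, so the MPVC case reduces to a short translation of the MPCC argument.
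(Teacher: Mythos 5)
Your proof is correct and follows essentially the same route as the paper's: apply \cref{lem:combine_astat_weird} to the reflected pairs $(-\mu^\alpha,\nu^\alpha)\in A^\alpha$, use convexity of the MPVC weak-stationarity system for the combined multipliers, exploit $\bar\mu_i\geq0$ on $I^{00}(\bar x)$ to collapse the first disjunct to $\bar\mu_i=0$, and then specialize $C_i:=\set{(a,b)\in\nonpossq\given(1-d_i)a=d_ib}$ for parts \ref{item:mpvc_vector_stat} and \ref{item:mpvc_new_stat}. The only difference is that you spell out the substitution $a=-\bar\mu_i$, $b=\bar\nu_i$ and the $d=\alpha$ collapse explicitly, which the paper leaves implicit.
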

\begin{proof}
	We start with part~\ref{item:mpvc_weird_stat}.
	For each $\alpha\in\set{0,1}^p$, let
	$(\lambda^\alpha,\eta^\alpha,\mu^\alpha,\nu^\alpha)$
% 	$\lambda^\alpha\in\R^l$, $\eta^\alpha\in\R^m$,
% 	$\mu^\alpha,\nu^\alpha\in\R^p$
	be multipliers which satisfy the system of \astat{\alpha}-stationarity.
	
	We want to apply \cref{lem:combine_astat_weird}
	with $-\mu$ instead of $\mu$.
	Indeed, we have $(-\mu^\alpha,\nu^\alpha)\in A^\alpha$
	due to the \astat\alpha-stationarity.
	Thus, \cref{lem:combine_astat_weird}
	can be applied and
	there exists a convex combination
	$(\bar\lambda,\bar\eta,\bar\mu,\bar\nu)$
	of the multipliers
	$(\lambda^\alpha,\eta^\alpha,\mu^\alpha,\nu^\alpha)$
	such that
	$
	(-\bar\mu_i\geq0\land\bar\nu_i\geq0)
	\lor (-\bar\mu_i,\bar\nu_i)\in C_i
	$
	for all $i\in I^{00}(\bar x)$ holds.
	Because $\mu^\alpha_i\geq0$ holds for all $i\in I^{00}(\bar x)$
	and $\alpha\in\set{0,1}^p$, the same holds for $\bar\mu_i$.
	Thus, the condition $-\bar\mu_i\geq0$ can be equivalently
	replaced by $\bar\mu_i=0$ if $i\in I^{00}(\bar x)$.
	It remains to show that
	$(\bar\lambda,\bar\eta,\bar\mu,\bar\nu)$
	satisfies the system of weak stationarity.
	This, however, is true due to the convex nature
	of the system of weak stationarity.

	Part~\ref{item:mpvc_vector_stat} follows from
	part~\ref{item:mpvc_weird_stat} with the choice
	$C_i:=\set{(a,b)\in \nonpossq\given (1-d_i)a = d_ib}$,
	which is indeed a closed, connected and unbounded set with $0\in C_i$.

	Part~\ref{item:mpvc_new_stat} then follows trivially from
	part~\ref{item:mpvc_vector_stat}.

	Due to \cref{prop:astat_mpvc},
	these stationarity conditions are also 
	satisfied if $\bar x$ is a local minimizer of
	\eqref{eq:mpvc} that satisfies MPVC-GCQ.
\end{proof}

\begin{figure}[t]
		\def\weirdstat{1}
		\centering
\begin{tikzpicture}[scale=1.1]
% 	\fill[blue, opacity=0.3]
% 	(1,0) -- (1,1) --(0,1) --(0,0) --cycle;
	\ifthenelse{\castat=1}{
		\fill[blue, opacity=0.3]
		(1,0) -- (1,-1) --(0,-1) --(0,0) --cycle;
	}{}
% 	\ifthenelse{\aastat=1}{
% 		\fill[blue, opacity=0.3]
% 		(1,0) -- (1,-1) --(0,-1) --(0,0) --cycle;
% 	}{}
	\ifthenelse{\abstat=1}{
		\fill[blue, opacity=0.3]
		(1,0) -- (1,1) --(0,1) --(0,0) --cycle;
	}{}
	\draw[axis] (-1,0) -- (1,0)
% 	node[right=2* \nudge cm] {$G(x)$\sometimes{, $\mu$}};
	node[right=2* \nudge cm] {$\bar\mu_i$};
	\draw[axis] (0,-1) -- (0,1)
% 	node[above=2*\nudge cm] {$H(x)$\sometimes{, $\nu$}};
	node[above=2*\nudge cm] {$\bar\nu_i$};
% 	\draw[line width=\feaslw,blue] (0,0) -- (1,0) ;
	\draw[line width=\feaslw,blue] (0,0) -- (0,1) ;
	\ifthenelse{\pastat=1 \OR \aastat=1 \OR \castat=1}{
		\draw[line width=\feaslw,blue] (0,-1) -- (0,0) ;
	}{}
	\ifthenelse{\cdstat=1}{
		\draw[line width=\feaslw,blue] (0,0) -- (1,-1) ;
	}{}
	\ifthenelse{\weirdstat=1}{
		\draw[line width=0.7*\feaslw,blue,domain=0:1,variable=\x ] 
		(0,0) -- plot ({(\x)}, {-\x*\x});
% 		(0,0) -- plot ({sqrt(\x)-\x/3}, {-\x});
	}{}
	\ifthenelse{\pbstat=1 \OR \abstat=1 \OR \castat=1}{
		\draw[line width=\feaslw,blue] (1,0) -- (0,0) ;
	}{}
\end{tikzpicture}
\def\pastat{0}
\def\pbstat{0}
\def\aastat{0}
\def\abstat{0}
\def\castat{0}
\def\cdstat{0}
\def\weirdstat{0}
	\caption{geometric illustration of the stationarity condition
		of \cref{thm:mpvc_new_stat}~\ref{item:mpvc_weird_stat}
	with $i\in I^{00}(\bar x)$, 
	$C_i=\set{(a,-a^2)\mid a\leq0}$
	}
	\label{fig:weird_stat_mpvc}
\end{figure}
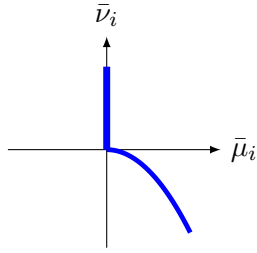

As a corollary, we obtain
an analogue to \cref{cor:equiv_mpcc}.
\begin{corollary}
	\label{cor:equiv_mpvc}
	Let $\bar x$ be a feasible point of \eqref{eq:mpcc}.
	The following are equivalent.
	\begin{enumerate}
		\item
			\label{item:cor:bstat_mpvc}
			$\bar x$ is linearized-B-stationary,
		\item
			\label{item:cor:astat_mpvc}
			$\bar x$ is \astat\alpha-stationary for all $\alpha\in\set{0,1}^p$,
		\item
			\label{item:cor:cdstat_mpvc}
			$\bar x$ is \cstat{d}-stationary for all $d\in [0,1]^p$,
		\item
			\label{item:cor:pstat_mpvc}
			$\bar x$ is \pstat\alpha-stationary for all $\alpha\in\set{0,1}^p$.
	\end{enumerate}
\end{corollary}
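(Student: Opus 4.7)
The plan is to mirror the proof of \cref{cor:equiv_mpcc} by establishing the cycle of implications \ref{item:cor:bstat_mpvc} $\iff$ \ref{item:cor:astat_mpvc} $\implies$ \ref{item:cor:cdstat_mpvc} $\implies$ \ref{item:cor:pstat_mpvc} $\implies$ \ref{item:cor:astat_mpvc}, invoking the MPVC analogues of the ingredients already used in the MPCC case. All of the genuine work has been done in \cref{prop:astat_mpvc} and \cref{thm:mpvc_new_stat}, so this corollary reduces to bookkeeping.

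First I would read off the equivalence \ref{item:cor:bstat_mpvc} $\iff$ \ref{item:cor:astat_mpvc} directly from \cref{prop:astat_mpvc}~\ref{item:bstat_iff_astat_mpvc}. Second, the implication \ref{item:cor:astat_mpvc} $\implies$ \ref{item:cor:cdstat_mpvc} is precisely the content of \cref{thm:mpvc_new_stat}~\ref{item:mpvc_vector_stat}, so no additional argument is needed there.

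For \ref{item:cor:cdstat_mpvc} $\implies$ \ref{item:cor:pstat_mpvc}, I would fix an arbitrary $\alpha\in\set{0,1}^p$ and specialise $d:=\alpha\in[0,1]^p$. On $I_{\alpha=0}^{00}(\bar x)$ the alternative $(1-d_i)\bar\mu_i=-d_i\bar\nu_i$ collapses to $\bar\mu_i=0$, and on $I_{\alpha=1}^{00}(\bar x)$ it collapses to $\bar\nu_i=0$; matching these against \cref{def:new_stat_mpvc}~\ref{item:def:p_stat_mpvc} gives exactly \pstat\alpha-stationarity, which is the consistency remark already noted after \cref{def:new_stat_mpvc}.

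Finally, \ref{item:cor:pstat_mpvc} $\implies$ \ref{item:cor:astat_mpvc} follows by inspection: on $I_{\alpha=0}^{00}(\bar x)$ both disjuncts of the \pstat\alpha-condition force $\bar\mu_i=0$, and on $I_{\alpha=1}^{00}(\bar x)$ both disjuncts yield $\bar\nu_i\geq0$, which is all that \astat\alpha-stationarity requires (recall that weak stationarity for MPVCs already provides $\bar\mu_i\geq0$ on $I^{00}(\bar x)$). I do not foresee any real obstacle; if anything, the only point worth stating carefully is the reduction of the $(1-d_i)\bar\mu_i=-d_i\bar\nu_i$ equation at the two boundary values of $\alpha_i$, but this is immediate.
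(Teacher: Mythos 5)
Your proof is correct and takes essentially the same route as the paper: the equivalence of \ref{item:cor:bstat_mpvc} and \ref{item:cor:astat_mpvc} via \cref{prop:astat_mpvc}~\ref{item:bstat_iff_astat_mpvc}, the implication to \ref{item:cor:cdstat_mpvc} via \cref{thm:mpvc_new_stat}~\ref{item:mpvc_vector_stat}, and the last two implications by inspecting the definitions. The paper simply labels these last two steps ``obvious'' (resp.\ cites \cref{cor:relations}~\ref{item:pstat_implies_astat}), whereas you spell out the $d=\alpha$ specialisation and the disjunct analysis explicitly; both are the same argument.
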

\begin{proof}
	The equivalence 
	\ref{item:cor:bstat_mpvc}$\iff$\ref{item:cor:astat_mpvc}
	was already stated in
	\cref{prop:astat_mpvc}~\ref{item:bstat_iff_astat_mpvc},
	and \ref{item:cor:astat_mpvc}$\implies$\ref{item:cor:cdstat_mpvc}
	was shown in \cref{thm:mpvc_new_stat}~\ref{item:mpvc_vector_stat}.
	Finally, the implication
	\ref{item:cor:cdstat_mpvc}$\implies$\ref{item:cor:pstat_mpvc}
	is obvious
	and \ref{item:cor:pstat_mpvc}$\implies$\ref{item:cor:astat_mpvc}
	follows from
	\cref{cor:relations}~\ref{item:pstat_implies_astat}.
\end{proof}

\section{Conclusion and outlook}
\label{sec:conclusion}

We introduced new first-order necessary stationarity conditions
for MPCCs and MPVCs.
In particular, we were able to introduce a new stationarity
condition which lies strictly between strong and M-stationarity.
We also provided a simple, elementary, and short proof of M-stationarity
for MPVCs in \cref{sec:simple_mstat}.

In the future, it might be interesting to investigate to what extend
the methods from this article
can be generalized to mathematical programs with disjunctive constraints
(MPDCs), which is a problem class more general than MPCCs or MPVCs.

Our proof of \pstat\alpha-stationarity 
(and the other new stationarity conditions)
was based on the Poincaré--Miranda theorem, which is not constructive.
Thus, it would be interesting to know whether a more constructive proof
or other alternative proofs can be found.

It is unclear whether the ideas from this article can be used
in Lebesgue or Sobolev spaces.
Here, a problem might be that infinite-dimensional generalizations
of the Poincaré--Miranda theorem or the Brouwer fixed-point theorem
usually require compactness of the set or the function.

Let us describe another open question.
For $i\in I^p$, consider sets $D_i\subset \R^2$.
Can we characterize the sets $D_i$ such that
weak stationarity and 
$(\bar\mu_i,\bar\nu_i)\in D_i\;\forall i\in I^{00}(\bar x)$
is a stationarity conditions (under MPCC-GCQ or MPVC-GCQ)?
Using \cref{thm:weird_stat_mpcc}~\ref{item:weird_stat_mpcc}
and \cref{thm:mpvc_new_stat}~\ref{item:mpvc_weird_stat},
one can describe already a large variety of such sets $D_i$,
% In the present article we described several such sets $D_i$,
but it is not clear whether these are all possibilities
for stationarity conditions described by a set $D_i$.

% \paragraph*{Acknowledgments}

%%fakesection: Bibliography
\printbibliography
\end{document}